\documentclass[a4paper,12pt,reqno]{amsart}
\usepackage{amssymb}
\usepackage{amsmath}
\usepackage{anyfontsize}
\usepackage{hyperref}
\usepackage{mathtools}
\usepackage{mathrsfs}
\usepackage{bbm}
\usepackage{tikz}
\usetikzlibrary{commutative-diagrams}
\usepackage{tikz-cd}
\usepackage{bm}
\usepackage{bm}
\usepackage{geometry}
\usepackage{colordvi}
\usepackage{color}

\DeclareMathOperator{\m}{\mathfrak{m}}

\DeclareMathOperator{\ovep}{\overline{\varepsilon}}

\allowdisplaybreaks

\numberwithin{equation}{section}

\theoremstyle{definition}
\newtheorem{theorem}{Theorem}[section]
\newtheorem{corollary}[theorem]{Corollary}
\newtheorem{lemma}[theorem]{Lemma}
\newtheorem{definition}[theorem]{Definition}
\newtheorem{proposition}[theorem]{Proposition}
\newtheorem{example}[theorem]{Example}
\newtheorem{remark}[theorem]{Remark}
\newtheorem{que}[theorem]{Question}

\begin{document}


\title{Chevalley property of module-finite Hopf algebras and discriminant ideals}

\author{Yimin Huang}
\address{School of Mathematical Sciences, Fudan University, Shanghai 200433, China}
\email{21110180008@m.fudan.edu.cn}

\author{Tiancheng Qi}
\address{School of Mathematical Sciences, Fudan University, Shanghai 200433, China}
\email{tcqi21@m.fudan.edu.cn}

\author{Quanshui Wu}
\address{School of Mathematical Sciences, Fudan University, Shanghai 200433, China}
\email{qswu@fudan.edu.cn}

\author{Ruipeng Zhu}
\address{School of Mathematics, Shanghai University of Finance and Economics, Shanghai 200433, China}
\email{zhuruipeng@sufe.edu.cn}



\begin{abstract}
In this paper, we study the Chevalley property of Cayley-Hamilton Hopf algebras in the sense of De Concini-Procesi-Reshetikhin-Rosso using discriminant ideals.
For any affine Cayley-Hamilton Hopf algebra $(H,C,\text{tr})$ whose identity fiber algebra has the Chevalley property, we prove that an irreducible $H$-module $V$ has the property that $V\otimes W$ is a completely reducible $H$-module for every irreducible $H$-module $W$ if and only if $V$ is annihilated by the lowest discriminant ideal of $(H,C,\text{tr})$, which establishes a bridge between the tensor-nondegenerate behaviour of the irreducible representations of $H$ and the lowest discriminant ideal of $(H,C,\text{tr})$. Using discriminant ideals, 
we prove that an affine Cayley-Hamilton Hopf algebra $(H,C,\text{tr})$ has the Chevalley property
if and only if its identity fiber algebra $H/\mathfrak{m}_{\ovep}H$ has the Chevalley property and all the discriminant ideals of $(H,C,\text{tr})$ are trivial, thereby resolving a question posed by Huang-Mi-Qi-Wu. Moreover, it is shown that the lowest discriminant subvariety $\mathcal{V}_{\ell}$ of the algebraic group $\operatorname{maxSpec}C$ is a closed subgroup, which reflects the rigid nature of $\mathcal{V}_{\ell}$ and is effective in determining the lowest discriminant subvarieties in certain examples of low GK dimension. Consequently, the lowest discriminant subvariety $\mathcal{V}_{\ell}$ is a smooth equidimensional algebraic variety, a property not generally shared by other discriminant subvarieties. This rigidity property provides a method, via the lowest discriminant ideals, for constructing a large family of Hopf algebras with the Chevalley property and finite GK dimension. The results are illustrated through applications to the big quantized Borel subalgebras at roots of unity and to certain Artin-Schelter Gorenstein Hopf algebras of low GK dimension. In particular, the framework yields (non-finite) tensor categories with the Chevalley property arising from some big quantum groups at roots of unity.

\end{abstract}

\subjclass[2020]{
16G30, 
16T05, 
16D60, 
17B37.
}
\keywords{Tensor-reducible representations, algebras with trace, discriminant ideals, Cayley-Hamilton algebras, Chevalley property, quantum groups}
\thanks{Q.-S. Wu is supported by the NSFC (Grant No. 12471032),  R.-P. Zhu  is supported by the NSFC (Grant No. 12301052).}

\maketitle	

\section*{Introduction}

Over the past three decades, Hopf algebras of finite Gelfand-Kirillov (abbreviated GK) dimension  have attracted considerable attention. See for example \cite{MR4993170,MR4600057,MR4298502,MR4201485,MR1482982,MR1676211,MR2661247,MR2732991,MR2178656,mi2025lowest,MR4042591,MR3490761} and the references therein. Among these, many important examples are \textit{affine} (i.e., finitely generated as algebras) and admit large central Hopf subalgebras (i.e., central Hopf subalgebras over which are finitely generated modules), including many quantum groups at roots of unity \cite{MR1103601,MR4600057,MR1296515,MR1124981,MR1351503,MR1288995,MR2178656} and many Artin-Schelter Gorenstein Hopf algebras of low GK dimension \cite{MR2320655,MR2661247,MR3490761,MR4042591,MR4281365}. Understanding the representation theory of such Hopf algebras is a fundamental and natural problem.

In \cite{HMQWChev2025}, the authors proved that \textit{all affine Hopf algebras admitting large central Hopf subalgebras} can be endowed with a Cayley-Hamilton Hopf algebra structure (see Definition \ref{def-CHHopf}) in the sense of De Concini-Procesi-Reshetikhin-Rosso \cite{MR2178656} via the Hattori-Stallings trace map (see Example \ref{eg-HStrace}). Hence a large class of important Hopf algebras of finite GK dimension fall within the framework of Cayley-Hamilton Hopf algebras. The theory and methods developed for Cayley-Hamilton Hopf algebras \cite{MR883869,MR1288995,MR2178656,MR3886192,mi2025lowest,HMQWChev2025} enable the representation theory and ring-theoretic properties of this broad class of Hopf algebras to be studied in a unified manner, without requiring case-by-case analysis. On the other hand, if $(H,C,\text{tr})$ is an affine Cayley-Hamilton Hopf algebra, by \cite[Theorem 4.5 (a)]{MR1288995}, $H$ is a finitely generated $C$-module and $C$ is an affine $\mathbbm{k}$-algebra.  Another advantage of working within the axiomatic framework of affine Cayley-Hamilton Hopf algebras is that, in many important cases of affine module-finite Hopf algebras, \textit{one can choose an appropriate trace map flexibly} (not only the Hattori-Stallings trace) to study the representation theory of these Hopf algebras.






 The \textit{Chevalley property}, introduced in \cite{MR1852304}, is an active research topic in representation theory and tensor categories, see \cite{MR1852304,MR1839475,MR3035999,MR3242743,MR3677868,MR4276314,MR4228433,MR4801656} for a partial list. A (rigid) tensor category is said to have the Chevalley property if the tensor product of any two irreducible/simple objects is completely reducible/semisimple. A Hopf algebra $H$ (with bijective antipode) is said to have the  Chevalley property if the tensor category of finite-dimensional $H$-modules has the Chevalley property (the dual Chevalley property can be defined similarly by considering the category of finite-dimensional comodules). This terminology is named after a classical result of Chevalley, which states that for any group, the tensor product of any two finite-dimensional irreducible representations over $\mathbb{C}$ is completely reducible.  In fact, any group algebra and the universal enveloping algebra of any Lie algebra over an algebraically closed field of characteristic zero have the Chevalley property \cite[Theorem 4.12.1]{MR3242743}. All finite-dimensional semisimple or basic Hopf algebras have the Chevalley property. For example, the small quantum Borel subgroups $\mathfrak{u}_{\epsilon}^{\geq 0}(\mathfrak{g})$ at roots of unity are basic \cite[Theorem 2.3 (a)]{MR1863398}. Molnar proved that the group algebra of a finite group $G$ over a field of characteristic $p$ has the Chevalley property if and only if $G$ has a unique Sylow $p$-subgroup \cite[Corollary 1]{MR639443}. For any restricted Lie algebra $(\mathfrak{g},[p])$, the restricted enveloping algebra $\mathfrak{u}(\mathfrak{g})$ has the Chevalley property if and only if the Jacobson radical of $\mathfrak{u}(\mathfrak{g})$ is generated by the $p$-nilpotent radical of $\mathfrak{g}$ \cite[Proposition 2]{MR639443}. Etingof-Gelaki \cite{MR1996801} showed that all finite-dimensional triangular Hopf algebras over an algebraically closed field of characteristic zero have the Chevalley property. Moreover, Etingof-Gelaki proved that any finite symmetric tensor category over an algebraically closed field of characteristic zero is equivalent to the category of finite-dimensional representations of some finite-dimensional triangular Hopf algebra \cite[Theorem 5.2]{MR1996801}; consequently, all such tensor categories have the Chevalley property. All finite-dimensional triangular Hopf algebras with the Chevalley property over an algebraically closed field of characteristic $\neq 2$ have been completely classified \cite{MR1852304,MR1839475,MR4276314,MR4228433}. There are finite-dimensional quasitriangular Hopf algebras without the Chevalley property (e.g., the Lusztig's small quantum groups $\mathfrak{u}_{\epsilon}(\mathfrak{sl}_{2})$ at roots of unity $\epsilon$). Although extensive work has been done on finite-dimensional Hopf algebras with the Chevalley property, little is known in the infinite-dimensional case. 

 

 In the rest of the introduction, we assume that $(H,C,\text{tr})$ is an affine Cayley-Hamilton Hopf algebra over an algebraically closed field $\mathbbm{k}$ of characteristic zero. Then $\operatorname{maxSpec}C$ (the maximal spectrum of $C$) is an affine algebraic group with respect to convolution; and for any $\mathfrak{m}\in \operatorname{maxSpec}C$, the \textit{fiber algebra} $H/\mathfrak{m}H$ at $\mathfrak{m}$ is a finite-dimensional algebra over $\mathbbm{k}$. It is well-known that the set of irreducible representations of $H$ is the disjoint union of the irreducible representations over its fiber algebras (see \S\ref{sec-modufinalg}). Let $\varepsilon$ be the counit of $H$, and $\ovep\coloneqq \varepsilon|_{C}$ be the counit of the central Hopf subalgebra $C$. Then $\mathfrak{m}_{\ovep}\coloneqq \text{Ker}\ovep\in \operatorname{maxSpec}C$ is the identity element of the algebraic group $\operatorname{maxSpec}C$. Moreover, the coproduct of $H$ turns the \textit{identity fiber algebra} $H/\mathfrak{m}_{\ovep}H$ into a finite-dimensional Hopf algebra, and endows each fiber algebra $H/\mathfrak{m}H$ with an $H/\mathfrak{m}_{\ovep}H$-$H/\mathfrak{m}_{\ovep}H$ bicomodule algebra structure (see \S \ref{subsec-fialgbiG}). In many examples of Cayley-Hamilton Hopf algebras arising from big quantum groups at roots of unity, the identity fiber algebras are isomorphic to the corresponding small quantum groups at roots of unity. 


One perspective to study the irreducible representations of $H$ is \textit{tensor-categorical}. Since $H$ is a noetherian affine PI $\mathbbm{k}$-algebra, the antipode of $H$ is bijective \cite[Corollary 2]{MR2271358} and all irreducible representations of $H$ are finite-dimensional over $\mathbbm{k}$ \cite[Theorem 13.10.3 (i)]{MR1811901}. Consequently, the irreducible representations of $H$ correspond exactly to the irreducible/simple objects in the tensor category $H\text{-mod}$ of finite-dimensional $H$-modules. As mentioned, each fiber algebra $H/\mathfrak{m}H$ is an $H/\mathfrak{m}_{\ovep}H$-comodule algebra. It follows that $H/\mathfrak{m}H\text{-mod}$ is a module category over the finite tensor category $H/\mathfrak{m}_{\ovep}H\text{-mod}$. In fact, by \cite[Proposition 2.8]{HMQWChev2025}, $H/\mathfrak{m}H\text{-mod}$ is indecomposable exact as a module category over the finite tensor category $H/\mathfrak{m}_{\ovep}H\text{-mod}$. Hence the Grothendieck group $\text{Gr}(H/\mathfrak{m}H)$ of $H/\mathfrak{m}H\text{-mod}$ is an irreducible $\mathbb{Z}_{+}$-module over the Grothendieck ring $\text{Gr}(H/\mathfrak{m}_{\ovep}H)$ by \cite[Proposition 7.7.2]{MR3242743}. In \cite{HMQWChev2025}, the authors build upon these observations to investigate the tensor action of irreducible representations of $H/\mathfrak{m}_{\ovep}H$ on other irreducible representations of $H$ using the Frobenius-Perron theory, aiming to understand how the structure of the identity fiber algebra $H/\mathfrak{m}_{\ovep}H$ affects the representation theory of $H$. Inspired by this insight, we consider irreducible representations whose tensor products with all irreducible representations satisfy certain nondegeneracy conditions. An $H$-module $V$ is called \textit{left tensor-reducible} (resp. \textit{right tensor-reducible}), if $V\otimes W$ (resp. $W\otimes V$) is completely reducible for any irreducible $H$-module $W$. An $H$-module is said to be \textit{tensor-reducible}, if it is both left tensor-reducible and right tensor-reducible. Clearly, all $1$-dimensional $H$-modules are tensor-reducible, and any left or right tensor-reducible module is itself completely reducible (see \S \ref{sub-tensred}). We regard the tensor-reducible representations of $H$ as the ``most nondegenerate'' representations of $H$ with respect to the tensor product. Moreover, it is clear that $H$ has the Chevalley property if and only if all irreducible $H$-modules are tensor-reducible. Thus, the study of tensor-reducible irreducible $H$-modules is essential to understanding the tensor-nondegenerate behaviour of irreducible representations of $H$, and the tensor-reducible property is closely tied to whether $H$ has the Chevalley property. 


Another perspective to study the irreducible representations of $H$ is to investigate the \textit{discriminant ideals} \cite{MR1972204,MR3415697} of $(H,C,\text{tr})$, as exploited in \cite{MR3886192,mi2025lowest,HMQWChev2025} recently. 
Discriminants and discriminant ideals are powerful tools in both commutative algebra and noncommutative algebra. Over the past decade, discriminants of noncommutative algebra and their applications have been extensively studied, see for example \cite{MR3281142,MR3415697,MR3720799,MR3663593,MR3886192,MR4843586}. However, very little work has been done on discriminant ideals. For any positive integer $k$, the \textit{$k$-discriminant ideal} $D_{k}(H/C;\text{tr})$ is the ideal of $C$ generated by
$$\{\text{det}(\text{tr}(h_{i}h_{j}))_{i,j=1}^{k}\mid (h_{1},...,h_{k})\in H^{k}\},$$
and the \textit{$k$-modified discriminant ideal} $MD_{k}(H/C;\text{tr})$ is the ideal of $C$ generated by
$$\{\text{det}(\text{tr}(h_{i}g_{j}))_{i,j=1}^{k}\mid (h_{1},...,h_{k}),(g_{1},...,g_{k})\in H^{k}\}.$$
Clearly, $MD_{k}(H/C;\text{tr})\supseteq D_{k}(H/C;\text{tr})$ and $MD_{k}(H/C;\text{tr})\supseteq MD_{k+1}(H/C;\text{tr})$ for all positive integers $k$. A discriminant ideal or modified discriminant ideal of $(H,C,\text{tr})$ is said to be \textit{trivial} if it is $0$ or $C$. Since $H$ is a finitely generated $C$-module, there is a positive integer $m$ 
 such that $MD_{m}(H/C;\text{tr})=MD_{m+1}(H/C;\text{tr})=\cdots=0$, which implies that there are only finitely many non-trivial (modified) discriminant ideals.

 By the Brown-Yakimov Theorem \cite[Theorem 4.1(b)]{MR3886192}, for any positive integer $k$,
\begin{equation}\label{eq-intro-discid}
\begin{aligned}
\mathcal{V}_{k}\coloneqq \mathcal{V}(D_{k}(H/C;\text{tr}))=&\mathcal{V}(MD_{k}(H/C;\text{tr}))\\
=& \{\mathfrak{m}\in \operatorname{maxSpec}C\mid \sum_{[V]\in \text{Irr}(H/\mathfrak{m}H)}(\dim_{\mathbbm{k}}V)^{2}<k \}.
\end{aligned}
\end{equation}
Following \cite{MR1103601}, the zero set $\mathcal{V}_{k}$ is called the \textit{$k$-discriminant subvariety} of $\operatorname{maxSpec}C$.
Brown-Yakimov's equality \eqref{eq-intro-discid} gives a deep relation between the dimensions of the irreducible representations of $H$ and the (modified) discriminant ideals of $(H,C,\text{tr})$. Moreover, there is a unique positive integer $\ell\leq m$ such that
$$\varnothing=\mathcal{V}_{1}=\mathcal{V}_{2}=\cdots=\mathcal{V}_{\ell-1}\subsetneq\mathcal{V}_{\ell}\subseteq \mathcal{V}_{\ell+1}\subseteq\cdots\subseteq \mathcal{V}_{m-1}\subseteq\mathcal{V}_{m}=\operatorname{maxSpec}C.$$
The corresponding discriminant ideal $D_{\ell}(H/C;\text{tr})$ (resp. the modified discriminant ideal $MD_{\ell}(H/C;\text{tr})$) is called the \textit{lowest discriminant ideal} (resp. the \textit{lowest modified discriminant ideal}) \cite{mi2025lowest}, and in this case the closed subvariety $\mathcal{V}_{\ell}$ is referred to as the \textit{lowest discriminant subvariety}. The fiber algebras associated with the maximal ideal in the lowest discriminant subvariety $\mathcal{V}_{\ell}$ may be regarded as the most degenerate among all fiber algebras and are also of particular interest for study. For instance, consider the quantized (positive) Borel subalgebra $\mathcal{U}_{\epsilon}^{\geq 0}(\mathfrak{g})$ of the De Concini-Kac quantized enveloping algebra $\mathcal{U}_{\epsilon}(\mathfrak{g})$ at a root of unity $\epsilon$ and the De Concini-Kac-Procesi central Hopf subalgebra $\mathcal{C}_{\epsilon}^{\geq 0}(\mathfrak{g})$ \cite{MR1351503}, see \S \ref{subsec-Uqb} for details. Then $\mathcal{U}_{\epsilon}^{\geq 0}(\mathfrak{g})$ is a finitely generated free $\mathcal{C}_{\epsilon}^{\geq 0}(\mathfrak{g})$-module, and $(\mathcal{U}_{\epsilon}^{\geq 0}(\mathfrak{g}),\mathcal{C}_{\epsilon}^{\geq 0}(\mathfrak{g}),\text{tr}_{\text{reg}})$ is an affine Cayley-Hamilton Hopf algebra \cite[Theorem 5.5]{mi2025lowest}, where $\text{tr}_{\text{reg}}:\mathcal{U}_{\epsilon}^{\geq 0}(\mathfrak{g})\to \mathcal{C}_{\epsilon}^{\geq 0}(\mathfrak{g})$ denotes the regular trace map (see Example \ref{eg-reutr1}). Moreover, all fiber algebras $\mathcal{U}_{\epsilon}^{\geq 0}(\mathfrak{g})/\mathfrak{m}\mathcal{U}_{\epsilon}^{\geq 0}(\mathfrak{g})$ over the lowest discriminant subvariety $\mathcal{V}(D_{\ell}(\mathcal{U}_{\epsilon}^{\geq 0}(\mathfrak{g})/\mathcal{C}_{\epsilon}^{\geq 0}(\mathfrak{g});\text{tr}_{\text{reg}}))$ are isomorphic to the small quantum Borel subgroup $ \mathfrak{u}_{\epsilon}^{\geq 0}(\mathfrak{g})$ as algebras. In \cite{mi2025lowest}, under the condition that the identity fiber algebra $H/\mathfrak{m}_{\ovep}H$ is basic, Mi-Wu-Yakimov carried out a detailed study of the lowest discriminant subvariety $V_{\ell}$, and provided a discription of $V_{\ell}$ in terms of maximally stable modules. Subsequently, for any affine Cayley-Hamilton Hopf algebra $(H,C,\text{tr})$ whose identity fiber has the Chevalley property, Huang-Mi-Qi-Wu \cite{HMQWChev2025} adopted a tensor-categorical approach and applied Frobenius-Perron theory successfully to provide several equivalent descriptions of the lowest discriminant subvariety, thereby extending several results in \cite{mi2025lowest}. Note that all the De Concini-Kac-Procesi big quantized Borel subalgebras at roots of unity \cite{MR1351503,MR1288995}, the De Concini-Lyubashenko quantized coordinate algebras at roots of unity \cite{MR1296515,MR1863398}, group algebras of central extensions of finite groups by finitely generated abelian groups, as well as many Artin-Schelter Gorenstein Hopf algebras of low GK dimension \cite{MR2661247,MR4281365,MR4042591}, fit into this framework \cite{mi2025lowest,HMQWChev2025}. 




This paper is devoted to understanding the interplay between the tensor nondegeneracy of irreducible representations (particularly the Chevalley property) and the discriminant ideals of Cayley-Hamilton Hopf algebras. 



The first result in this paper, Theorem \ref{mainthm-tensred4}, consists of three parts, proved in Theorems \ref{thm-tensred-insec2} and \ref{thm-whenalltriepare1d}. In the first part, under mild assumptions, it is proved that, for any irreducible representation of an affine Cayley-Hamilton Hopf algebra $(H,C,\text{tr})$, the conditions ``tensor-reducible'', ``left tensor-reducible'', and ``right tensor-reducible'' are equivalent, and that they characterize precisely the irreducible representations annihilated by the lowest discriminant ideal $D_{\ell}(H/C;\text{tr})$. This establishes a connection between the tensor-nondegenerate behaviour of $H$ and its lowest discriminant ideal. In the second part, the relationship between the tensor-reducibility of irreducible $H$-modules and the maximally stable property in the sense of Mi-Wu-Yakimov \cite{mi2025lowest} is fully elucidated. In the third part, we use the winding automorphisms of $H$ (see \S \ref{subsec-pfofthm1} for the definition) to characterize when the tensor-reducible irreducible $H$-modules are precisely the $1$-dimensional modules.

\begin{theorem}
[Theorems \ref{thm-tensred-insec2} and \ref{thm-whenalltriepare1d}]\label{mainthm-tensred4} Let $H$ be an affine Hopf algebra over an algebraically closed field $\mathbbm{k}$ of characteristic zero and let $C$ be a central Hopf subalgebra of $H$ such that $H$ is a finitely generated $C$-module and $H/\mathfrak{m}_{\overline{\varepsilon}}H$ has the Chevalley property. 
\begin{itemize}
\item[(a)] Suppose $\text{tr}: H \to C$ is a trace map such that $(H, C, \text{tr})$ is a Cayley-Hamilton Hopf algebra. Then for any irreducible $H$-module $V$, the following are equivalent.
\begin{itemize}
\item[(i)] $V$ is tensor-reducible;
\item[(ii)] $V$ is left tensor-reducible;
\item[(iii)] $V$ is  right tensor-reducible;
\item[(iv)] $V$ is annihilated by the lowest discriminant ideal $D_{\ell}(H/C;\text{tr})$;
\item[(v)] $V$ is annihilated by the lowest modified discriminant ideal $MD_{\ell}(H/C;\text{tr})$;
\item[(vi)] $V\otimes V^{*}$ is completely reducible.
\end{itemize}
\item[(b)]All maximally stable irreducible $H$-modules in the sense of \cite{mi2025lowest} are tensor-reducible. The converse holds provided that $H/\mathfrak{m}_{\ovep}H$ is basic.
\item[(c)]The following are equivalent.
\begin{itemize}
\item[(i)]All tensor-reducible irreducible $H$-modules are $1$-dimensional;
\item[(ii)]The identity fiber algebra $H/\mathfrak{m}_{\ovep}H$ is basic and all maximally stable irreducible $H$-modules are $1$-dimensional;
\item[(iii)]The identity fiber algebra $H/\mathfrak{m}_{\ovep}H$ is basic and 
$$\mathcal{V}_{\ell}=W_{l}(G(H^{\circ}))(\mathfrak{m}_{\ovep})=W_{r}(G(H^{\circ}))(\mathfrak{m}_{\ovep}),$$
where $W_{l}(G(H^{\circ}))$ and $W_{r}(G(H^{\circ}))$ are the left and right winding automorphism groups of $H$, respectively.
\end{itemize}
\end{itemize}
\end{theorem}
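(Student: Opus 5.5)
Part (a) hinges on identifying $\mathcal{V}_{\ell}$ fiberwise. I would use the results of \cite{HMQWChev2025}: when $H/\mathfrak{m}_{\ovep}H$ has the Chevalley property, $\mathfrak{m}\in\mathcal{V}_{\ell}$ iff the fiber $H/\mathfrak{m}H$ minimizes $\sum_{[V]}(\dim V)^2$. Equivalently, via Frobenius--Perron dimensions of the $\mathbb{Z}_+$-module $\mathrm{Gr}(H/\mathfrak{m}H)$ over $\mathrm{Gr}(H/\mathfrak{m}_{\ovep}H)$, the fiber at $\mathfrak{m}$ is ``as semisimple as possible relative to the identity fiber''. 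Concretely, for $\mathfrak{m}\in\mathcal{V}_\ell$ we have $\dim H/\mathfrak{m}H \cdot(\text{FPdim of the regular object of the identity fiber's semisimple part})$ matching the corresponding quantity for the fiber, which forces $J(H/\mathfrak{m}H)=J(H/\mathfrak{m}_{\ovep}H)\cdot H/\mathfrak{m}H$.

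The plan for (a) is then the following.
\begin{itemize}
\item[(1)] (iv)$\Leftrightarrow$(v) is immediate from the Brown--Yakimov equality \eqref{eq-intro-discid}, since both ideals have zero set $\mathcal{V}_\ell$. An irreducible $V$ lives over a unique $\mathfrak{m}$, and it is annihilated by the ideal iff $\mathfrak{m}\in\mathcal{V}_\ell$.
\item[(2)] (i)$\Rightarrow$(ii),(iii) are trivial. (ii)$\Rightarrow$(vi) follows since $V^*$ is irreducible (bijective antipode); similarly (iii)$\Rightarrow$(vi) via ${}^*V$, using $V\cong({}^*V)^*$.
\item[(3)] (vi)$\Rightarrow$(iv). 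Let $V$ lie over $\mathfrak{m}$. Then $V\otimes V^*$ lies over $\mathfrak{m}_{\ovep}$, i.e.\ it is an $H/\mathfrak{m}_{\ovep}H$-module. It contains the trivial module and surjects to it. Complete reducibility makes $\mathbbm{k}$ a summand, which is the ``stable'' situation. Then $V\otimes -$ from $H/\mathfrak{m}_{\ovep}H$-mod to $H/\mathfrak{m}H$-mod is exact, has a left inverse up to summands ($V^*\otimes V\otimes X\supseteq X$), and sends projectives to projectives. Using this together with the Chevalley property of the identity fiber, I would compare Cartan data and FP dimensions to show that $\sum(\dim)^2$ at $\mathfrak{m}$ attains the minimum $\ell-1$, so $\mathfrak{m}\in\mathcal{V}_\ell$.
\item[(4)] (iv)$\Rightarrow$(i). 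For $\mathfrak{m}\in\mathcal{V}_\ell$ and irreducible $W$ over $\mathfrak{n}$, the module $V\otimes W$ lives over $\mathfrak{m}\mathfrak{n}$. I would use the description from \cite{HMQWChev2025} that fibers over $\mathcal{V}_\ell$ have radical generated by that of the identity fiber, so that every $V$ over $\mathcal{V}_\ell$ is annihilated by $J(H/\mathfrak{m}_{\ovep}H)$ under the coaction. Then $J(H/\mathfrak{mn}H)$ acts on $V\otimes W$ through $\Delta$, and the Chevalley property of the identity fiber (its radical is a Hopf ideal) kills it.
\end{itemize}
I expect step (3) to be the main obstacle, since it requires turning a local complete-reducibility statement into the global numerical minimality. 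My first attempt would be a dimension count: $\dim H/\mathfrak{m}H$ is constant, and $V\otimes P$ for projective covers $P$ decomposes into projectives over the fiber.

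For (b), maximally stable means, per \cite{mi2025lowest}, that the stabilizer under the tensoring action of $\mathrm{Irr}$ of the identity fiber is maximal. I would show directly that such $V$ has $V\otimes V^*$ semisimple and then apply (a). Conversely, if the identity fiber is basic, then all its simples are 1-dimensional characters, and tensor-reducibility with $V\otimes V^*$ semisimple forces $V\otimes V^*$ to be a sum of characters, which is the maximal stability condition.

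For (c), take (i)$\Rightarrow$(ii) first. Irreducibles of the identity fiber lie over $\mathfrak{m}_{\ovep}\in\mathcal{V}_\ell$ (the identity fiber attains the minimum), so they are tensor-reducible and hence 1-dimensional, i.e.\ the identity fiber is basic; then apply (b). For (ii)$\Rightarrow$(iii), 1-dimensional modules are elements $\chi\in G(H^\circ)$. The modules over $\mathfrak{m}$ are $\chi$ twisted by winding, and the maximal ideal lying under $\chi$ is $W_l(\chi)(\mathfrak{m}_{\ovep})$. Since every $\mathfrak{m}\in\mathcal{V}_\ell$ carries a tensor-reducible simple, which is 1-dimensional, this gives the stated equality, and similarly on the right. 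For (iii)$\Rightarrow$(i), winding automorphisms preserve dimensions. A tensor-reducible simple lies over some $W_l(\chi)(\mathfrak{m}_{\ovep})$, and pulling it back gives a simple of the basic identity fiber, hence it is 1-dimensional.
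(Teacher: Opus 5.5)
Your arguments for (b) and (c) are fine once (a) is known, and they are more self-contained than the paper's, which cites Mi--Wu--Yakimov where you use Schur's lemma and windings directly. Two small corrections in (c): the maximal ideal under a character $\chi$ is $\ker(\chi|_C)=W_l(\chi^{-1})(\mathfrak{m}_{\ovep})$, and you should also state the easy inclusion $W_l(G(H^{\circ}))(\mathfrak{m}_{\ovep})\subseteq\mathcal{V}_{\ell}$, which holds because windings identify the fibers. The real problem is (a): the two implications that carry its content are not proved.

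\textbf{(iv)$\Rightarrow$(i).} Write $H_{\mathfrak{m}}=H/\mathfrak{m}H$ and $J_{\mathfrak{m}}$ for its radical. The ``description from \cite{HMQWChev2025}'' you invoke is not a precise statement. $H_{\mathfrak{m}_{\ovep}}$ is not a subalgebra of $H_{\mathfrak{m}}$, so $J(H/\mathfrak{m}_{\ovep}H)\cdot H/\mathfrak{m}H$ has no meaning. The Chevalley property of the identity fiber (via \cite[Theorem 2.1]{MR1995055}) only gives $\Delta_{\chi,\ovep}(J_{\chi})\subseteq J_{\chi}\otimes H_{\ovep}+H_{\chi}\otimes J_{\ovep}$ and its mirror. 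That controls coproducts only when one tensor factor is the identity fiber.

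For $V$ over $\mathfrak{m}\in\mathcal{V}_{\ell}$ and $W$ over an arbitrary $\mathfrak{n}$, you need $\Delta_{\mathfrak{m},\mathfrak{n}}(J_{\mathfrak{m}\ast\mathfrak{n}})\subseteq J_{\mathfrak{m}}\otimes H_{\mathfrak{n}}+H_{\mathfrak{m}}\otimes J_{\mathfrak{n}}$, where neither factor is the identity fiber. This is exactly Proposition~\ref{thm-DelJinone-bardimeq1}(a), the paper's main new ingredient, and it genuinely uses $\dim\overline{H_{\mathfrak{m}}}=\dim\overline{H_{\ovep}}$, i.e.\ $\mathrm{Sd}(\mathfrak{m})=\mathrm{Sd}(\mathfrak{m}_{\ovep})$. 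The argument runs in three steps:
\begin{enumerate}
\item The dimension equality makes $\overline{H_{\chi}}$ an $\overline{H_{\ovep}}$-bi-Galois object.
\item Larson--Radford ($\overline{S_{\ovep}}^{2}=\mathrm{id}$) together with Schauenburg's lemma shows that $S^{2}$ acts on $\overline{H_{\chi}}$ as a winding by a character $\theta$.
\item A trace computation using $\theta$ shows that the right ideal $(\overline{H_{\chi}})^{*}\rightharpoonup J_{\varphi\ast\chi}$ of $H_{\varphi}$ is nil, hence lies in $J_{\varphi}$.
\end{enumerate}
Once this is available, (iv)$\Rightarrow$(i) is immediate. ``The radical of the identity fiber is a Hopf ideal'' does not substitute for it.

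\textbf{(vi)$\Rightarrow$(iv).} You only list ingredients (``compare Cartan data and FP dimensions''), so as written nothing links (ii), (iii) or (vi) back to (iv). The paper closes this gap in two ways.
\begin{itemize}
\item (vi)$\Rightarrow$(v) uses \cite[Theorem 4.1(2)]{HMQWChev2025}: $\mathfrak{m}\in\mathcal{V}_{\ell}$ once $L\otimes L^{*}$ is completely reducible for every irreducible $L$ over $\mathfrak{m}$. Each such $L$ embeds in $W\otimes V$ for some irreducible $W$ of the identity fiber. So $L\otimes L^{*}$ is a subquotient of $W\otimes(V\otimes V^{*})\otimes W^{*}$, which is completely reducible because identity-fiber irreducibles are tensor-reducible.
\item (ii)$\Rightarrow$(iv) is done directly by adjunction: one-sided tensor-reducibility forces $\overline{H_{\mathfrak{m}^{-1}}}\otimes V\cong\overline{H_{\mathfrak{m}_{\ovep}}}^{\oplus\dim V}$, so $\mathrm{Sd}(\mathfrak{m}^{-1})=\mathrm{Sd}(\mathfrak{m}_{\ovep})$.
\end{itemize}

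Finally, your (iii)$\Rightarrow$(vi) ``via ${}^{*}V$'' only shows that ${}^{*}V\otimes V$ is completely reducible. That is (vi) for ${}^{*}V$, not for $V$. You can repair it by establishing (iv) for ${}^{*}V$, which lies over $\mathfrak{m}^{-1}$, and then using $\mathrm{Sd}(\mathfrak{m}^{-1})=\mathrm{Sd}(\mathfrak{m})$.
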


In \cite[Theorem 4.1]{HMQWChev2025}, the authors proved that the positive integer $\ell$ appearing in Theorem \ref{mainthm-tensred4} is exactly equal to $\text{FPdim}(\text{Gr}(H/\mathfrak{m}_{\ovep}H))+1$, where $\text{FPdim}(\text{Gr}(H/\mathfrak{m}_{\ovep}H))$ is the Frobenius-Perron dimension of the Grothendieck ring of $H/\mathfrak{m}_{\ovep}H\text{-mod}$. The implication (ii) $\Rightarrow$ (iii) in Theorem \ref{mainthm-tensred4} (c) was established in \cite[Theorem 4.3 (c)]{mi2025lowest}. Therefore, Theorem \ref{mainthm-tensred4} (c) may be regarded as providing a necessary and sufficient condition, in terms of the tensor-reducible irreducible representations of $H$, for the conditions of \cite[Theorem 4.3 (c)]{mi2025lowest} to hold.

Theorem \ref{mainthm-tensred4} shows that, for an affine Cayley-Hamilton Hopf algebra whose identity fiber algebra has the Chevalley property, determining its tensor-reducible irreducible representations reduces to determining the irreducible representations of the fiber algebras over the lowest discriminant subvarieties. Thanks to Mi-Wu-Yakimov's description \cite[Theorem 5.5(b)]{mi2025lowest} of the lowest discriminant varieties for the De Concini-Kac-Procesi big quantized Borel subalgebras $\mathcal{U}_{\epsilon}^{\geq 0}(\mathfrak{g})$ at roots of unity $\epsilon$, Theorem \ref{mainthm-tensred4} is used to deduce that the tensor-reducible irreducible representations of $\mathcal{U}_{\epsilon}^{\geq 0}(\mathfrak{g})$ are precisely the $1$-dimensional ones, see Proposition \ref{prop-quaborel-rofutenr}.


Moreover, Theorem \ref{mainthm-tensred4} implies that the tensor-nondegenerate behaviour of irreducible representations of $H$ is closely related to the lowest discriminant ideal of $(H,C,\text{tr})$. It is therefore natural to investigate the connection between the Chevalley property of $H$ and its discriminant ideals. In \cite{HMQWChev2025}, the authors showed that if $H$ has the Chevalley property, then all the discriminant ideals of $(H,C,\text{tr})$ are trivial, and posed the following question:
\begin{que}
Is there a necessary and sufficient condition for all affine Cayley-Hamilton
Hopf algebras to have the Chevalley property based on the properties of discriminant ideals?
\end{que}





The second result in this paper, Theorem \ref{intro-thm-iffchev}, gives an affirmative answer to the above question.
\begin{theorem}
[Theorem \ref{thm-Cheviff-ap} and Proposition \ref{prop-semiprimeChev}]\label{intro-thm-iffchev}Let $H$ be an affine Hopf algebra over an algebraically closed field $\mathbbm{k}$ of characteristic zero and let $C$ be a central Hopf subalgebra of $H$ such that $H$ is a finitely generated $C$-module.
\begin{itemize}
\item[(a)]For any trace map $\text{tr}:H\to C$ such that $(H,C,\text{tr})$ is a Cayley-Hamilton Hopf algebra, the following are equivalent:
\begin{itemize}
\item[(i)] $H$ has the Chevalley property;
\item[(ii)] The identity fiber algebra $H/\mathfrak{m}_{\ovep}H$ has the Chevalley property and the lowest discriminant ideal $D_{\ell}(H/C;\text{tr})=0$;
\item[(iii)] The identity fiber algebra $H/\mathfrak{m}_{\ovep}H$ has the Chevalley property and the lowest modified discriminant ideal $MD_{\ell}(H/C;\text{tr})=0$;
\item[(iv)]The identity fiber algebra $H/\mathfrak{m}_{\ovep}H$ has the Chevalley property and all the discriminant ideals of $(H,C,\text{tr})$ are trivial;
\item[(v)]The identity fiber algebra $H/\mathfrak{m}_{\ovep}H$ has the Chevalley property and all the modified discriminant ideals of $(H,C,\text{tr})$ are trivial.
\end{itemize}
\item[(b)] The following are equivalent for the pair $(H,C)$:
\begin{itemize}
\item[(i)] $H$ has the Chevalley property and finite global dimension;
\item[(ii)] $H$ is semiprime and has the Chevalley property;
\item[(iii)] The identity fiber algebra $H/\mathfrak{m}_{\ovep}H$ is semisimple.
\end{itemize}
\end{itemize}
\end{theorem}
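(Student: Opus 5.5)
For part (a), I will deduce everything from Theorem \ref{mainthm-tensred4}(a) and the Brown--Yakimov equality \eqref{eq-intro-discid}. Recall that $\mathcal{V}_\ell=\mathcal{V}(D_\ell)=\mathcal{V}(MD_\ell)$ is nonempty. Every irreducible $V$ is annihilated by some $\mathfrak{m}\in\operatorname{maxSpec}C$, and $V$ is annihilated by $D_\ell$ exactly when $D_\ell\subseteq\mathfrak{m}$, i.e. when $\mathfrak{m}\in\mathcal{V}_\ell$.

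For (i)$\Rightarrow$(ii), the identity fiber $H/\mathfrak{m}_{\ovep}H$-mod is a full tensor subcategory of $H$-mod, closed under subquotients. Hence its objects inherit the Chevalley property from $H$. If $H$ has the Chevalley property, then every irreducible module is tensor-reducible, so by Theorem \ref{mainthm-tensred4}(a) every irreducible is annihilated by $D_\ell$. Thus every $\mathfrak{m}$ lies in $\mathcal{V}_\ell$, which gives $\mathcal{V}_\ell=\operatorname{maxSpec}C$. Since $C$ is affine and semiprime, $D_\ell$ lies in the Jacobson radical, which is zero, so $D_\ell=0$.

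I need $C$ to be reduced. For a commutative Hopf algebra in characteristic zero this is Cartier's theorem.

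For (ii)$\Rightarrow$(i), the equality $D_\ell=0$ gives $\mathcal{V}_\ell$ equal to everything, so every irreducible is annihilated by $D_\ell$. By Theorem \ref{mainthm-tensred4}(a) every irreducible is then tensor-reducible, which is the Chevalley property.

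The equivalence (ii)$\Leftrightarrow$(iii) is identical, using $MD_\ell$ and the same reducedness argument. For (iv) and (v), note that triviality of all ideals forces $D_\ell=0$, because $D_\ell\ne C$ as $\mathcal{V}_\ell\neq\varnothing$. Conversely, $D_\ell=0$ gives $\mathcal{V}_k=\operatorname{maxSpec}C$ for all $k\ge\ell$, so $D_k=0$ by the same argument. For $k<\ell$ the variety $\mathcal{V}_k$ is empty, so $D_k=C$. The modified ideals are handled the same way.

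For part (b), I plan the cycle (iii)$\Rightarrow$(i),(ii) and (i)$\Rightarrow$(iii), (ii)$\Rightarrow$(iii).

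Assume (iii). A semisimple identity fiber has the Chevalley property. I would then show that all fibers $H/\mathfrak{m}H$ are semisimple. The route is that $H/\mathfrak{m}H$-mod is an exact module category over the semisimple $H/\mathfrak{m}_{\ovep}H$-mod \cite[Proposition 2.8]{HMQWChev2025}. An exact module category over a semisimple tensor category is semisimple, since every object is a summand of $X\otimes M$ with $X$ projective. Semisimplicity of all fibers gives complete reducibility of every finite-dimensional module supported at one point. This yields the Chevalley property, because a tensor product of irreducibles lives in the fiber at the product point of the group. Next, $H$ is semiprime: $H/\mathfrak{m}H$ semisimple for all $\mathfrak{m}$ forces $J(H)\subseteq\bigcap\mathfrak{m}H$. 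By the Krull intersection theorem plus the reducedness of $C$, this gives that $H$ is semiprime, since $H$ is affine PI and so $J(H)$ equals the prime radical. For finite global dimension, I would use that all simple modules have projective dimension computed locally. Over $C_\mathfrak{m}$, with $C$ smooth (the group is smooth in characteristic zero), I would argue that $H_\mathfrak{m}$ has global dimension $\dim C_\mathfrak{m}$. This step is plausible if $H$ is $C$-projective, and I would cite known results on Hopf algebras finite over central Hopf subalgebras to get flatness.

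Assume (i) or (ii). In the converse direction, the trivial module $\mathbbm{k}$ must be projective in the fiber category. Under (i), or under (ii) via a regularity result for semiprime Noetherian PI Hopf algebras, the trivial module has finite projective dimension. Using Chevalley, $J(H/\mathfrak{m}_{\ovep}H)$ is a Hopf ideal, and its quotient is a semisimple Hopf algebra. I would show $J=0$ by proving $\mathfrak{m}_{\ovep}H$ is a radical ideal, using semiprimeness and homogeneity of the fibers under winding automorphisms. Concretely, the singular set where fibers are non-semisimple is closed and translation-invariant, hence either empty or everything. Generic semisimplicity of fibers for semiprime $H$ with $C$ reduced then forces it to be empty.

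I expect the main obstacle to be (b), especially (ii)$\Rightarrow$(iii) and the finite global dimension claim.
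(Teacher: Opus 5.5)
Your part (a) is correct and follows the paper. The one difference is in (i)$\Rightarrow$(iv): the paper cites an external result, whereas you derive $D_\ell=0$ from Theorem \ref{mainthm-tensred4}(a) (every irreducible is tensor-reducible, so every $\mathfrak m$ lies in $\mathcal V_\ell$, and $\bigcap_{\mathfrak m}\mathfrak m=0$ in the reduced affine ring $C$). That is fine. In (b), the direction (iii)$\Rightarrow$(i),(ii) is sound in outline but needs two repairs. First, $\bigcap_{\mathfrak m}\mathfrak mH=0$ holds because ${}_CH$ is projective, hence embeds in a free $C$-module. Krull's intersection theorem is not the reason, and the conclusion fails for arbitrary finitely generated modules (e.g.\ $\mathbbm{k}[x]/(x^2)$ over $\mathbbm{k}[x]$). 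Second, your regularity sketch is vague. It is cleanest to use that the global dimension of $H$ equals $\operatorname{pd}_H\mathbbm{k}$. Here $\mathbbm{k}$ is a direct summand of $H\otimes_C C/\mathfrak m_{\ovep}$, whose projective dimension is at most $\operatorname{pd}_C C/\mathfrak m_{\ovep}<\infty$, because $H$ is $C$-flat and $C$ is smooth.

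The genuine gap is in (i)$\Rightarrow$(iii) and (ii)$\Rightarrow$(iii).
\begin{enumerate}
\item You invoke ``a regularity result for semiprime noetherian PI Hopf algebras'', but no such general result is available. In the present setting, ``semiprime and Chevalley implies regular'' is exactly the implication (ii)$\Rightarrow$(i) of (b), so the step is circular.
\item Under (i) you never produce the semiprimeness your next step uses. The paper deduces it from finite global dimension via standard results on noetherian PI rings of finite global dimension.
\item More seriously, the claim that the non-semisimple locus is translation-invariant ``by homogeneity under winding automorphisms'' is unjustified. For $\chi\in G(H^{\circ})$, $W_l(\chi)$ only identifies the fibers over $\psi$ and $\chi|_C\ast\psi$. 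The restrictions $\chi|_C$ form exactly the subgroup of points whose fiber has a $1$-dimensional module, and this subgroup is in general proper (e.g.\ $\mathbbm{k}G$ as in Example \ref{eg-grpalg-extenc} with $[G,G]\cap N\neq 1$).
\end{enumerate}
The missing idea is to feed in part (a). The Chevalley property gives $D_\ell=0$, so $\text{Sd}(\mathfrak m)=\text{Sd}(\mathfrak m_{\ovep})$ for all $\mathfrak m$ by Lemma \ref{lem-sdeqiffinlows3}. Since all fibers have the same dimension, $\dim_{\mathbbm{k}}\text{Jac}(H/\mathfrak mH)=\dim_{\mathbbm{k}}H/\mathfrak mH-\text{Sd}(\mathfrak m)$ is constant. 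Hence a single semisimple fiber forces $H/\mathfrak m_{\ovep}H$ to be semisimple.

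Finally, you only assert that a semiprime $H$ has at least one semisimple fiber, and this is not automatic. It is Lemma \ref{lem-semipri-ssfa}. Its proof uses that $H_T$ is separable over the total ring of fractions $C_T$ in characteristic zero. One then spreads the vanishing of $\operatorname{Ext}^1_{H^e}(H,K)$ to some maximal ideal, where $K$ is the kernel of the multiplication map $H\otimes_C H^{op}\to H$.
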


By \cite[Theorem 2.9 (8)]{MR4201485}, for any affine Hopf algebra $H$ admitting a large central Hopf subalgebra $C$, if the identity fiber algebra $H/\mathfrak{m}_{\ovep}H$ is semisimple, then $H$ is regular. The converse is not true in general. For example, the quantized Borel subalgebra $\mathcal{U}_{\epsilon}^{\geq 0}(\mathfrak{g})$ is skew Calabi-Yau, while its identity fiber with respect to the De Concini-Kac-Procesi central Hopf subalgebra $\mathcal{C}_{\epsilon}^{\geq 0}(\mathfrak{g})$ is not semisimple. Hence the implication (i) $\Rightarrow$ (iii) in in Theorem \ref{intro-thm-iffchev} (b) essentially requires that $H$ has the Chevalley property.





 
The third result in this paper, Theorem \ref{thm-intrsubgrplv}, addresses the \textit{rigidity} of the lowest discriminant subvarieties of Cayley-Hamilton Hopf algebras. More precisely, we prove that, under the hypotheses of Theorem \ref{mainthm-tensred4}, the lowest discriminant subvariety $\mathcal{V}_{\ell}$ of $(H,C,\text{tr})$ is a subgroup of the algebraic group $\operatorname{maxSpec}C$. In particular, $\mathcal{V}_{\ell}$ is a smooth equidimensional subvariety of $\operatorname{maxSpec}C$. This feature is not shared by other discriminant subvarieties, see Example \ref{eg-OepSL2-notsubgrp}. Furthermore, we show that the lowest discriminant ideal of $(H,C,\text{tr})$ induces a (not necessary finite-dimensional) Hopf quotient of $H$ satisfying the Chevalley property. Thus, Theorem \ref{thm-intrsubgrplv}, on one hand, reflects the rigid nature of $\mathcal{V}_{\ell}$, and in some examples of low GK dimension, allows the closed subvariety $\mathcal{V}_{\ell}$ to be determined efficiently; on the other hand, it provides a method for using the lowest discriminant ideals to construct a large family of Hopf algebras, often infinite-dimensional, with the Chevalley property.
In particular, our results produce (non-finite) tensor categories with the Chevalley property from some big quantum groups at roots of unity, see Example \ref{eg-chevtensorcat-fromquanBorel}.



\begin{theorem}
[Theorem \ref{thm-lowestsetsubgrp} and Corollary \ref{cor-affCHquodisHq-cheviffid}]\label{thm-intrsubgrplv}Let $(H,C,\text{tr})$ be an affine Cayley-Hamilton Hopf algebra satisfying the conditions in Theorem \ref{mainthm-tensred4}.  Then the following hold.
\begin{itemize}
\item[(a)] The lowest discriminant subvariety $\mathcal{V}_{\ell}$ of $(H,C,\text{tr})$ is a closed subgroup of the algebraic group $\operatorname{maxSpec}C$. In particular, $\mathcal{V}_{\ell}$ is both smooth and equidimensional.
\item[(b)] Let $\mathfrak{d}_{\ell}= \sqrt{D_{\ell}(H/C;\text{tr})}$ be the radical of the lowest discriminant ideal $ D_{\ell}(H/C;\text{tr})$ in $C$ and let $\text{tr}_{\mathfrak{d}_{\ell}}:H/\mathfrak{d}_{\ell}H\to C/\mathfrak{d}_{\ell}$ be the induced trace map by $\text{tr}$.
\begin{itemize}
\item[(i)] The algebra with trace $(H/\mathfrak{d}_{\ell}H, C/\mathfrak{d}_{\ell}, \text{tr}_{\mathfrak{d}_{\ell}})$ is an affine Cayley-Hamilton Hopf algebra with the Chevalley property.
\item[(ii)] For any Hopf ideal $\mathfrak{a}$ of $C$, the Hopf quotient $H/\mathfrak{a}H$ has the Chevalley property if and only if $\mathfrak{a}\supseteq \mathfrak{d}_{\ell}$.
\end{itemize}
\end{itemize}
\end{theorem}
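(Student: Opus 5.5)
The plan is to reduce everything to the characterization in Theorem \ref{mainthm-tensred4}(a), after first recasting it fiberwise. Let $V$ be an irreducible $H$-module lying over $\mathfrak{m}\in\operatorname{maxSpec}C$. Then $C$ acts on $V$ through the character $C\to C/\mathfrak{m}=\mathbbm{k}$. Hence $V$ is annihilated by $D_{\ell}(H/C;\text{tr})$ if and only if $D_{\ell}(H/C;\text{tr})\subseteq\mathfrak{m}$, that is, if and only if $\mathfrak{m}\in\mathcal{V}_{\ell}$. By Theorem \ref{mainthm-tensred4}(a), we get: $\mathfrak{m}\in\mathcal{V}_{\ell}$ if and only if some (equivalently, every) irreducible $H/\mathfrak{m}H$-module is tensor-reducible, and this is also equivalent to being only left tensor-reducible, or to $V\otimes V^{*}$ being completely reducible.

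For (a), we check the subgroup axioms in this language.
\begin{itemize}
\item \emph{Identity.} The trivial module $\mathbbm{k}_{\varepsilon}$ lies over $\mathfrak{m}_{\ovep}$ and is $1$-dimensional, hence tensor-reducible, so $\mathfrak{m}_{\ovep}\in\mathcal{V}_{\ell}$.
\item \emph{Products.} Let $V$ lie over $\mathfrak{m}$ and $W$ over $\mathfrak{n}$, both tensor-reducible. Since $C$ acts on $V\otimes W$ via $\Delta$, it acts by the convolution character, so $V\otimes W$ lies over $\mathfrak{m}\ast\mathfrak{n}$. By hypothesis $V\otimes W$ is completely reducible; choose an irreducible summand $U$. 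For any irreducible $X$, $U\otimes X$ is a summand of $V\otimes(W\otimes X)$. Here $W\otimes X$ is a direct sum of irreducibles $Y$, and each $V\otimes Y$ is completely reducible, so $U\otimes X$ is completely reducible. Thus $U$ is left tensor-reducible, hence tensor-reducible by (a)(ii)$\Rightarrow$(i), and $\mathfrak{m}\ast\mathfrak{n}\in\mathcal{V}_{\ell}$.
\item \emph{Inverses.} The dual $V^{*}$ lies over the inverse point $S(\mathfrak{m})$. Since the antipode is bijective, $X\otimes V^{*}\cong (V\otimes {}^{*}X)^{*}$, and duals of completely reducible modules are completely reducible. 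So $V^{*}$ is right tensor-reducible, hence tensor-reducible.
\end{itemize}
Closedness of $\mathcal{V}_{\ell}$ is automatic, being a zero set. A closed subgroup of an affine algebraic group over a field of characteristic zero is smooth, and its irreducible components are cosets of the identity component, so it is equidimensional.

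For (b)(i), $\mathfrak{d}_{\ell}$ is the vanishing ideal of the closed subgroup $\mathcal{V}_{\ell}$, so it is a Hopf ideal of $C$ and $H/\mathfrak{d}_{\ell}H$ is a Hopf quotient. I would check that $\text{tr}$ descends to $\text{tr}_{\mathfrak{d}_{\ell}}$ because it is $C$-linear, so $\text{tr}(\mathfrak{d}_{\ell}H)\subseteq\mathfrak{d}_{\ell}$. The Cayley-Hamilton identities and the Hopf compatibilities of Definition \ref{def-CHHopf} then pass to the quotient by base change; this bookkeeping is the step I expect to need the most care. For the Chevalley property, irreducible $H/\mathfrak{d}_{\ell}H$-modules are exactly the irreducible $H$-modules over points of $\mathcal{V}_{\ell}$, which are tensor-reducible. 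So their tensor products are completely reducible as $H$-modules, hence as $H/\mathfrak{d}_{\ell}H$-modules.

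For (b)(ii), if $\mathfrak{a}\supseteq\mathfrak{d}_{\ell}$, the same argument applies because $\mathcal{V}(\mathfrak{a})\subseteq\mathcal{V}_{\ell}$. Conversely, suppose $H/\mathfrak{a}H$ has the Chevalley property, and let $V$ be an irreducible $H$-module over $\mathfrak{m}\supseteq\mathfrak{a}$. Then $V^{*}$ lies over $S(\mathfrak{m})\supseteq S(\mathfrak{a})=\mathfrak{a}$, so $V\otimes V^{*}$ is an $H/\mathfrak{a}H$-module and is completely reducible. By (a)(vi), $V$ is tensor-reducible, so $\mathfrak{m}\in\mathcal{V}_{\ell}$, and therefore $\mathcal{V}(\mathfrak{a})\subseteq\mathcal{V}_{\ell}$. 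The Nullstellensatz then gives $\mathfrak{d}_{\ell}\subseteq\sqrt{\mathfrak{a}}$. To conclude $\mathfrak{a}\supseteq\mathfrak{d}_{\ell}$, I would invoke Cartier's theorem: $C/\mathfrak{a}$ is a commutative Hopf algebra in characteristic zero, hence reduced, so $\mathfrak{a}=\sqrt{\mathfrak{a}}$.
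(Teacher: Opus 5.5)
Your proposal is correct. The identity and product steps are essentially the paper's argument for Theorem \ref{thm-lowestsetsubgrp}; you just spell out why an irreducible summand of $V\otimes W$ is again tensor-reducible, and the final Nullstellensatz-plus-Cartier step in (b)(ii) is also the paper's. You depart from the paper in two places.

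\emph{Inverses.} The paper does not pass through duals of tensor-reducible modules. It notes $\text{Sd}(\mathfrak{m})=\text{Sd}(\mathfrak{m}^{-1})$ (via $V\mapsto V^{*}$) and applies \eqref{eq-thm-BY-disrep}. This is shorter, and it shows that \emph{every} $\mathcal{V}_{k}$ is closed under inversion, so only closure under products really uses the lowest level (compare Example \ref{eg-OepSL2-notsubgrp}). Your route via $X\otimes V^{*}\cong (V\otimes {}^{*}X)^{*}$ and (iii)$\Rightarrow$(i) is equally valid.

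\emph{Necessity in (b)(ii).} The paper regards $(H/\mathfrak{a}H,C/\mathfrak{a},\text{tr}_{\mathfrak{a}})$ as an affine Cayley--Hamilton Hopf algebra. It applies Theorem \ref{thm-Cheviff-ap} to it, and hence the Huang--Mi--Qi--Wu result that the Chevalley property forces trivial discriminant ideals. From this it reads off $\text{Sd}_{H}(\mathfrak{m})=\text{Sd}_{H}(\mathfrak{m}_{\ovep})$ for all $\mathfrak{m}\supseteq\mathfrak{a}$. Your use of criterion (vi) is more economical. Both $V$ and $V^{*}$ lie over $\mathcal{V}(\mathfrak{a})$, so $V\otimes V^{*}$ is already an $H/\mathfrak{a}H$-module. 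You therefore never need the Cayley--Hamilton structure on the quotient, nor the theorem that Chevalley implies trivial discriminant ideals. Criterion (vi) is exactly the right one here, since $V\otimes W$ for an arbitrary irreducible $H$-module $W$ need not be an $H/\mathfrak{a}H$-module.

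\emph{Bookkeeping in (b)(i).} One point there is not supplied by base change alone. For $C/\mathfrak{d}_{\ell}$ to be a central Hopf \emph{subalgebra} of $H/\mathfrak{d}_{\ell}H$, you need $\mathfrak{d}_{\ell}H\cap C=\mathfrak{d}_{\ell}$. The paper gets this from $\text{tr}(c)=nc$ together with $\operatorname{char}\mathbbm{k}=0$; equivalently, $\tfrac{1}{n}\text{tr}$ is a $C$-linear retraction of the inclusion $C\hookrightarrow H$.
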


 If, in addition, $H$ is prime and of GK dimension one, then either $H$ is commutative or the lowest discriminant subvariety of $(H,C,\text{tr})$ is a finite cyclic subgroup of the algebraic group $\operatorname{maxSpec}C$, see Corollary \ref{cor-GK1prim-low}. Moreover,  every finite cyclic group arises as the lowest discriminant subvariety of some affine prime Cayley-Hamilton Hopf algebra of GK dimension one whose identity fiber algebra has the Chevalley property, see Example \ref{eg-geLiualg}.


The paper is organized as follows. In \S \ref{section 1}, we briefly present some preliminaries concerning module-finite algebras, Cayley-Hamilton algebras, and Hopf-Galois extensions. In \S \ref{sec-2}, we first use the fact that all fiber algebras of an affine Cayley-Hamilton Hopf algebra are bi-Galois objects over the identity fiber algebra to lay the groundwork for the proof of Theorem \ref{mainthm-tensred4}; after establishing some basic properties of tensor-reducible representations, we then provide the full proof of Theorem \ref{mainthm-tensred4} and derive Proposition \ref{prop-quaborel-rofutenr}.  In \S \ref{sec-3}, we prove Theorems \ref{intro-thm-iffchev} and \ref{thm-intrsubgrplv} and illustrate them with several examples.

\section{Preliminaries}\label{section 1}


Throughout the paper, $\mathbbm{k}$ is a fixed base field and all algebras are over $\mathbbm{k}$. An unadorned $\otimes$ signifies $\otimes_{\mathbbm{k}}$. All modules over algebras considered here are left modules. For an algebra $A$, the category of all finite-dimensional $A$-modules is denoted by $A\text{-mod}$. The set of isomorphism classes of irreducible $A$-modules is denoted by $\text{Irr}(A)$ and the maximal spectrum of $A$ is denoted by $\operatorname{maxSpec}A$. For any $A$-module $W$ of finite length and any irreducible $A$-module $V$, $[W:V]$ is the multiplicity of $V$ in a composition series of $W$. 

We refer to the reader \cite{MR1243637} for some basic materials about Hopf algebras. The coproduct, counit, and antipode of a Hopf algebra are denoted by $\Delta,\varepsilon$ and $S$, respectively. For any Hopf algebra $H$ and $h\in H$, we use Sweedler's notation $\Delta(h)=\sum_{(h)} h_{(1)}\otimes h_{(2)} \in H \otimes H$. We denote the finite dual Hopf algebra of $H$ by $H^{\circ}$. The set of group-like elements of $H$ is denoted by $G(H)$. It is clear that $G(H^{\circ})$ coincides with the set of all characters of $H$.

In this section, we provide some preliminaries on module-finite algebras, Cayley-Hamilton algebras, and Hopf-Galois extensions. For more detailed background, we refer the reader to our primary references \cite{MR1898492,MR1811901,MR883869,MR1288995,MR2178656,MR1408508,MR2075600}.

\subsection{Module-finite algebras}\label{sec-modufinalg}
Let $A$ be an algebra and let $C$ be a central subalgebra of $A$. We say that $A$ is a \textit{module-finite $C$-algebra} \cite[p.xv]{MR2080008}, if $A$ is a finitely generated $C$-module. In this subsection, we assume that \textit{$A$ is an affine module-finite $C$-algebra} over an algebraically closed field $\mathbbm{k}$. By the Artin-Tate Lemma, $C$ is an affine $\mathbbm{k}$-algebra. Since $A$ is an affine PI algebra over $\mathbbm{k}$, all irreducible $A$-module are finite-dimensional over $\mathbbm{k}$ \cite[Theorem 13.10.3 (i)]{MR1811901}. For any $\mathfrak{m}\in \operatorname{maxSpec}C$, $C/\mathfrak{m}\cong \mathbbm{k}$ by Hilbert's Nullstellensatz. Hence $A/\mathfrak{m}A$ is finite-dimensional over $\mathbbm{k}$. The family of finite-dimensional algebras $\{A/\mathfrak{m}A\mid \mathfrak{m}\in \operatorname{maxSpec}C \}$ \textit{contains all information about the irreducible representations of $A$.} Indeed, by Kaplansky's Theorem, the map $\text{Irr}(A)\to\operatorname{maxSpec}A, [V]\mapsto \text{Ann}_{A}V$ is a well-defined bijection. Moreover, the map $\pi:\operatorname{maxSpec}A\to \operatorname{maxSpec}C, Q\mapsto Q\cap C$ is surjective, by \cite[Theorem 13.8.14]{MR1811901}. Hence, for any irreducible $A$-module $V$, $V$ is an irreducible module over the finite dimensional algebra $A/\mathfrak{m}A$, where $\mathfrak{m}=(\text{Ann}_{A}V)\cap C \in \operatorname{maxSpec}C$. Note that for any $\mathfrak{m}\in \operatorname{maxSpec}C$, the set of isomorphism classes of irreducible $A$-modules corresponding to the fiber $\pi^{-1}(\mathfrak{m})$ of $\mathfrak{m}$ under $\pi$ is precisely $\text{Irr}(A/\mathfrak{m}A)$. The finite-dimensional algebra $A/\mathfrak{m}A$ is called the \textit{fiber algebra} of $A$ at $\mathfrak{m}$.

\subsection{Cayley-Hamilton algebras}
Throughout this subsection, we assume that the base field $\mathbbm{k}$ is of characteristic zero. Let $A$ be a $\mathbbm{k}$-algebra and let $C$ be a central subalgebra of $A$. A $C$-linear map $\text{tr}:A\to C$ is called a \textit{trace map} \cite{MR883869,MR3886192} if it  satisfies $\text{tr}(ab)=\text{tr}(ba)$ for all $a,b\in A$. The triple $(A,C,\text{tr})$ is called an \textit{algebra with trace} \cite{MR883869}.

Let $(A,C,\text{tr})$ be a $\mathbbm{k}$-algebra with trace, and let $n$ be a positive integer. For any positive integer $1\leq k\leq n$ and $a\in A$, let 
$$c_{k}(a)=\frac{1}{k!}\left|\begin{array}{ccccc}\text{tr}(a) & 1 & 0 & \cdots & 0 \\ \text{tr}(a^{2}) & \text{tr}(a) & 2 & \cdots & 0 \\ \vdots & \vdots & \vdots & & \vdots \\ \text{tr}(a^{k-1}) & \text{tr}(a^{k-2}) & \text{tr}(a^{k-3}) & \cdots & k-1 \\ \text{tr}(a^{k}) & \text{tr}(a^{k-1}) & \text{tr}(a^{k-2}) & \cdots & \text{tr}(a)\end{array}\right|\in C.$$
Then, for any $a\in A$, one defines the \textit{$n$-th characteristic polynomial} of $a$ as
\begin{equation}\label{eq-charpoly-CHalg}
p_{n,a}(t)=t^{n}-c_{1}(a)t^{n-1}+\cdots+(-1)^{n-1}c_{n-1}(a)t+(-1)^{n}c_{n}(a)\in C[t].
\end{equation}

If $\text{tr}:\text{M}_{n}(C)\to C$ is the usual trace map of the matrix algebra over a commutative algebra $C$, then \eqref{eq-charpoly-CHalg} coincides with the usual characteristic polynomial of a matrix.

\begin{definition}
[\cite{MR883869,MR1288995}]Let $(A,C,\text{tr})$ be a $\mathbbm{k}$-algebra with trace, $n$ be a positive integer. The algebra with trace $(A,C,\text{tr})$ is called a Cayley-Hamilton algebra of degree $n$, if $p_{n,a}(a)=0$ for all $a\in A$, and $\text{tr}(1)=n$.
\end{definition}
By Cayley-Hamilton theorem, $(\text{M}_{n}(C),C,\text{tr})$ is a Cayley-Hamilton algebra of degree $n$. 

\begin{example}
[regular trace]Suppose $A$ is a module-finite $C$-algebra and  $_{C}A$ is a free module of rank $n$.\label{eg-reutr1} Let $\{b_{1},...,b_{n}\}$ be a $C$-basis of $A$. Then for any element $a\in A$, there is a unique matrix $\iota(a)\in \text{M}_{n}(C)$ such that $(ab_{1},...,ab_{n})=(b_{1},...,b_{n})\iota(a)$. This gives an algebra embedding $\iota:A\to\text{M}_{n}(C)$. Then $\text{tr}_{\text{reg}}(a)=\text{tr} (\iota(a))$ gives a trace map $\text{tr}_{\text{reg}}: A\to C$. It is clear that the trace map $\text{tr}_{\text{reg}}:A\to C$ is independent of the choices of the $C$-basis of $A$, which is called the \textit{regular trace} of $A$. It is direct to verify that the algebra with trace $(A,C,\text{tr}_{\text{reg}})$ is a Cayley-Hamilton algebra of degree $n$.
\end{example}
\begin{example}
[Hattori-Stallings trace, \cite{MR175950,MR202807}]Let $A$ be a module-finite $C$-algebra\label{eg-HStrace} such that $_{C}A$ is projective. Let $\{x_{1},...,x_{m}\}\subseteq A$ and $\{x_{1}^{*},...,x_{m}^{*}\}\subseteq \text{Hom}_{C}(A,C)$ be a dual basis of the projective $C$-module $A$. Then one can directly verify that the map $\text{tr}_{HS}:A\to C$ defined by
$$\text{tr}_{HS}(a)=\sum_{k=1}^{m}x_{k}^{*}(ax_{k})$$ 
is a well-defined trace map, which is independent of the choice of dual basis of $_{C}A$. In \cite[Theorem 2.7]{HMQWChev2025}, it was proved that if $A$ is a projective $C$-module of rank $n$, then $(A,C,\text{tr}_{HS})$ is a Cayley-Hamilton algebra of degree $n$. It is worth noting that if $A$ is a free $C$-module, then $\text{tr}_{HS}$ coincides with the regular trace given in Example \ref{eg-reutr1}.
\end{example}
\begin{definition}
[\cite{MR2178656}] A Cayley-Hamilton Hopf algebra is a Cayley-Hamilton algebra $(H,C,\text{tr})$ such that  $H$ is a Hopf algebra and $C$ is a central Hopf subalgebra of $H$.\label{def-CHHopf}
\end{definition}
For all affine Hopf algebras $H$ that admit large central Hopf subalgebras $C$, it was proved in \cite[Theorem 2.5]{HMQWChev2025} that $_{C}H$ are projective modules of constant rank and the algebras with trace $(H,C,\text{tr}_{HS})$ are Cayley-Hamilton Hopf algebras. 

\subsection{Hopf-Galois extensions}
Let $H$ be a Hopf algebra over the field $\mathbbm{k}$. Recall that for a right $H$-comodule algebra $A$, the extension $A^{coH}\subseteq A$ is said to be \textit{right $H$-Galois}, if the canonical map $\beta_{r}:A\otimes_{A^{coH}}A\to A\otimes H,a\otimes b\mapsto \sum_{(b)}ab_{(0)}\otimes b_{(1)} $ is bijective. If, in addition, $A^{coH}=\mathbbm{k}$, then $A$ is called a \textit{right $H$-Galois object}. Similarly, for a left comodule algebra, one can define the notions of a \textit{left Galois extension} and a \textit{left Galois object}. An $H$-$H$ bicomodule algebra is called an \textit{$H$-$H$-bi-Galois object} \cite{MR1408508,MR2075600}, if it is both a left and a right $H$-Galois object. Later we will need the following fact.
\begin{lemma}
[\text{\cite[Corollary 3.1.4]{MR2075600}}]Let $A$ be an $H$-$H$-bi-Galois object, and let $f:A\to A$ be an algebra homomorphism.  Suppose $f$ is right $H$-comodule homomorphism, then there is some $\theta\in G(H^{\circ})$ such that $f(a)=\sum_{(a)}\theta(a_{(-1)})a_{(0)}$ for all $a\in A$.\label{lem-ricomoen-wingdf-bgo}
\end{lemma}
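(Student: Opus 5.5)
The plan is to construct $\theta$ directly from $f$, using Schauenburg's description of the left Hopf algebra of a right Galois object, and then to check the displayed formula using the left $H$-Galois property.

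\textbf{Setup.} Since $A$ is a right $H$-Galois object, $A^{coH}=\mathbbm{k}$. The canonical map $\beta_r\colon A\otimes A\to A\otimes H$ is therefore bijective. Define the translation map
$$\tau(h)=\beta_r^{-1}(1\otimes h)=\sum h^{[1]}\otimes h^{[2]}.$$
It satisfies
$$\sum h^{[1]}h^{[2]}{}_{(0)}\otimes h^{[2]}{}_{(1)}=1\otimes h \quad\text{and}\quad \sum a_{(0)}a_{(1)}{}^{[1]}\otimes a_{(1)}{}^{[2]}=1\otimes a .$$
Let $L=L(A,H)\subseteq A\otimes A^{op}$ be the subspace of $\sum x_i\otimes y_i$ that are coinvariant for the right coaction $x\otimes y\mapsto x_{(0)}\otimes y_{(0)}\otimes x_{(1)}S(y_{(1)})$. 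Schauenburg proved that $L$ is a Hopf algebra and that $A$ is an $L$-$H$ bi-Galois object via
$$\lambda_L(a)=\sum a_{(0)}\otimes a_{(1)}{}^{[1]}\otimes a_{(1)}{}^{[2]}.$$
Since $A$ is already a left $H$-Galois object compatible with the right coaction, the uniqueness part of his theory gives a Hopf algebra isomorphism $\phi\colon H\to L$ with $\lambda_L=(\phi\otimes \mathrm{id})\circ\lambda_H$. I will quote this from \cite{MR2075600} rather than reprove it.

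\textbf{Constructing $\theta$.} Define $\Theta\colon L\to A$ by $\sum x_i\otimes y_i\mapsto \sum f(x_i)y_i$.
\begin{itemize}
\item The map $f$ is right $H$-colinear, so the coinvariance condition defining $L$ carries over to $\sum f(x_i)\otimes y_i$. Hence $\sum f(x_i)y_i$ is a right coinvariant, so it lies in $A^{coH}=\mathbbm{k}$ and $\Theta$ takes values in $\mathbbm{k}$.
\item $\Theta$ is multiplicative for the product $(x\otimes y)(x'\otimes y')=xx'\otimes y'y$ of $L$. Indeed, $f(xx')y'y=f(x)\,(f(x')y')\,y$, and $f(x')y'$ is a scalar, so this equals $(f(x')y')(f(x)y)$.
\item $\Theta(1\otimes 1)=1$.
\end{itemize}
So $\Theta$ is a character of $L$. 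Set $\theta=\Theta\circ\phi\in G(H^{\circ})$.

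\textbf{Verifying the formula.} Using the identity $\sum a_{(0)}a_{(1)}{}^{[1]}\otimes a_{(1)}{}^{[2]}=1\otimes a$ and the fact that $f$ is an algebra map, we compute
$$\sum \theta(a_{(-1)})a_{(0)}=\sum \Theta\bigl(a_{(0)}\otimes a_{(1)}{}^{[1]}\bigr)\,a_{(1)}{}^{[2]}=\sum f(a_{(0)})\,a_{(1)}{}^{[1]}a_{(1)}{}^{[2]}\cdots$$
This does not close up as written. The cleaner route is to evaluate
$$\sum f(a_{(0)})\,a_{(1)}{}^{[1]}\,a_{(1)}{}^{[2]}$$
in two ways:
\begin{itemize}
\item by the translation identity $\sum h^{[1]}h^{[2]}=\varepsilon(h)1$, it equals $f(a)$;
\item by treating $f(a_{(0)})a_{(1)}{}^{[1]}$ as a scalar (it is $\Theta$ of the $L$-component of $\lambda_L(a)$), it equals $\sum\Theta(\cdots)\,a_{(1)}{}^{[2]}$.
\end{itemize}
Matching these two expressions via $\lambda_L=(\phi\otimes\mathrm{id})\lambda_H$ gives $f(a)=\sum\theta(a_{(-1)})a_{(0)}$.

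\textbf{Main obstacle.} The main obstacle is this last bookkeeping step: identifying $\lambda_L$ with $\lambda_H$ through $\phi$ and keeping the Sweedler and translation indices straight. If that becomes messy, I would instead argue more abstractly. Both $f$ and each winding map $a\mapsto\theta(a_{(-1)})a_{(0)}$ are right colinear algebra endomorphisms of $A$, and such endomorphisms are automatically bijective. The group $\mathrm{Aut}^{H}(A)$ of right colinear algebra automorphisms is known to be isomorphic to $G(L^{\circ})$. Transporting along $L\cong H$ then gives exactly the winding form of $f$.
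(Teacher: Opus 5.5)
The paper gives no proof of this lemma; it simply cites Schauenburg. Your strategy is the standard argument behind that citation: turn $f$ into a character $\Theta$ of $L(A,H)$, then pull it back along the isomorphism $H\cong L(A,H)$ that the left Galois structure provides. It works, but two points need fixing.

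\textbf{The coaction defining $L(A,H)$.} The coaction defining $L(A,H)\subseteq A\otimes A^{op}$ is the plain codiagonal one, $x\otimes y\mapsto x_{(0)}\otimes y_{(0)}\otimes x_{(1)}y_{(1)}$, with no antipode. With your twisted version $x_{(1)}S(y_{(1)})$ the construction collapses. Take $A=H=\mathbbm{k}G$:
\begin{itemize}
\item the twisted coinvariants are spanned by the elements $g\otimes g$, which multiply to $g^{2}\notin\mathbbm{k}$ when $g^{2}\neq 1$;
\item $\lambda_L(g)=g\otimes g^{-1}\otimes g$ does not even lie in $L\otimes A$.
\end{itemize}
So your step ``$\sum f(x_i)\otimes y_i$ is coinvariant, hence $\sum f(x_i)y_i$ is coinvariant'' is unjustified as written. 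With the codiagonal coaction it is immediate. The map $f\otimes\mathrm{id}$ is colinear, and multiplication $A\otimes A\to A$ is colinear because $A$ is a comodule algebra. Hence $\sum f(x_i)y_i\in A^{coH}=\mathbbm{k}$. This is the same reason the counit of $L$ is $x\otimes y\mapsto xy$. After this correction, your multiplicativity check for $\Theta$ and the definition $\theta=\Theta\circ\phi$ are fine.

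\textbf{The final verification.} The verification closes in one line, so there is no obstacle:
\[
\sum\theta(a_{(-1)})a_{(0)}=(\Theta\otimes\mathrm{id})\lambda_L(a)=\sum f(a_{(0)})\,a_{(1)}{}^{[1]}a_{(1)}{}^{[2]}=\sum f(a_{(0)})\varepsilon(a_{(1)})=f(a).
\]
This uses $(\phi\otimes\mathrm{id})\lambda_H=\lambda_L$ and $h^{[1]}h^{[2]}=\varepsilon(h)1$. Your ``two ways'' paragraph is just this computation. Drop the fallback via $\mathrm{Aut}^{H}(A)\cong G(L^{\circ})$, since that isomorphism is essentially the statement being proved.
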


\section{Tensor-reducible representations of Hopf algebras}\label{sec-2}
In this section, we first establish the groundwork required for the proof of Theorem \ref{mainthm-tensred4}. These results rely on the fact that, for any affine Hopf algebra admitting a large central Hopf subalgebra, every fiber algebra is a bi-Galois object over the identity fiber algebra. We then present several basic properties of tensor-reducible representations of such Hopf algebras and conclude with the proof of Theorem \ref{mainthm-tensred4}. In the remainder of this paper, the base field $\mathbbm{k}$ is assumed to be an algebraically closed field of characteristic zero, and we work under the following hypothesis:
\begin{align*}
\text{(FinHopf) }& H \textit{ is an affine Hopf algebra over  }\mathbbm{k}\textit{ with a } \textit{central Hopf subalgebra } C \\
& \textit{such that } H\textit{ is a finitely generated }C\textit{-module}.
\end{align*}

\subsection{Fiber algebras and bi-Galois objects}\label{subsec-fialgbiG}

Let $(H,C)$ be a pair of Hopf algebras satisfying the hypothesis (FinHopf). Then $H$ is a noetherian affine PI Hopf algebra. By \cite[Corollary 2]{MR2271358}, the antipode $S$ of $H$ is bijective. Hence $H\text{-mod}$ is a tensor category with respect to the tensor product.  For any finite-dimensional $H$-module $V$, the left dual $V^{\ast}$ and right dual $^{\ast}V$ of $V$ are the usual dual space of $V$, with actions of $H$ given respectively by
$$(h\xi)(v)=\xi(S(h)v)\text{ and\  } (h\xi)(v)=\xi(S^{-1}(h)v)$$
for all $h\in H,v\in V$ and $\xi\in \text{Hom}_{\mathbbm{k}}(V,\mathbbm{k})$. Clearly, if $V$ is an irreducible $H$-module, then both $V^{\ast}$ and $^{\ast}V$ are irreducible $H$-modules.

 In this setting, one also has an algebraic group isomorphism $G(C^{\circ})\cong \operatorname{maxSpec}C$ given by $\chi\mapsto \mathfrak{m}_{\chi}\coloneqq \text{Ker}\chi$. For $\chi,\psi\in G(C^{\circ})$, the convolution product of $\chi$ amd $\psi$ is denoted by $\chi\ast \psi$. Sometimes it will be convenient to write $\mathfrak{m}_{\chi\ast \psi}$ as $\mathfrak{m}_{\chi}\ast\mathfrak{m}_{\psi}$.

 The identity element of $G(C^{\circ})$ with respect to the convolution product is given by $\ovep=\varepsilon|_{C}:C\to\mathbbm{k}$. For any $\chi\in G(C^{\circ})$, the inverse of $\chi$ with respect to the convolution product is given by $\chi S$. Thus, the inverse of $\mathfrak{m}\in \operatorname{maxSpec}C$ is $\mathfrak{m}_{\chi S}$, denoted by $\mathfrak{m}^{-1}$. 
 
 For any two characters $\chi,\psi\in G(C^{\circ})$, it is clear that the coproduct $\Delta:H\to H\otimes H$ induces an algebra homomorphism
\begin{equation}\label{eq-Del-fibers}
\Delta_{\chi,\psi}:H/\mathfrak{m}_{\chi\ast \psi}H\to H/\mathfrak{m}_{\chi}H\otimes H/\mathfrak{m}_{\psi}H,\overline{h}\mapsto \sum_{(h)}\overline{h_{(1)}}\otimes \overline{h_{(2)}}.
\end{equation}
Thus, the identity fiber algebra $ H/\mathfrak{m}_{\ovep}H$ is a finite-dimensional Hopf algebra with coproduct $\Delta_{\ovep,\ovep}$. It follows from \eqref{eq-Del-fibers} that each fiber algebra $H/\mathfrak{m}H$ is a left and right comodule algebra over $ H/\mathfrak{m}_{\ovep}H$. Moreover, \eqref{eq-Del-fibers} defines a bifunctor
\begin{equation}\label{eq-bifunct-conv-fiberalgsten}
\otimes : H/\mathfrak{m}_{\chi}H\text{-mod} \times H/\mathfrak{m}_{\psi}H\text{-mod}\to H/\mathfrak{m}_{\chi\ast\psi}H\text{-mod}.
\end{equation}
In particular, for any $\mathfrak{m}\in \operatorname{maxSpec}C$, \eqref{eq-bifunct-conv-fiberalgsten} turns $H/\mathfrak{m}H\text{-mod}$ into both a left and a right module category over the finite tensor category $H/\mathfrak{m}_{\ovep}H\text{-mod}$. Moreover, all fiber algebras $H/\mathfrak{m}H$ are bi-Galois objects over the identity fiber algebra $H/\mathfrak{m}_{\ovep}H$ as proved in the following lemma, which is a special case of \cite[Lemma 2.4]{MR3530496}. 


\begin{lemma} \label{lem-biGalo-allfibalg1}
Let $(H,C)$ be a pair of Hopf algebras satisfying the hypothesis (FinHopf). Then $H/\mathfrak{m}H$ is an $H/\mathfrak{m}_{\ovep}H$-$H/\mathfrak{m}_{\ovep}H$-bi-Galois object  for any $\mathfrak{m}\in \operatorname{maxSpec}C$.
\end{lemma}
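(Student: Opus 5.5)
We need to show that for $\mathfrak{m}=\mathfrak{m}_\chi$ the fiber algebra $A\coloneqq H/\mathfrak{m}H$ is a right (and symmetrically a left) Galois object over $K\coloneqq H/\mathfrak{m}_{\ovep}H$. The right coaction is $\rho=\Delta_{\chi,\ovep}:A\to A\otimes K$ from \eqref{eq-Del-fibers}, using $\chi\ast\ovep=\chi$; the left coaction is $\Delta_{\ovep,\chi}$. Coassociativity and the comodule algebra axioms are inherited from $\Delta$. There are three things to prove: the canonical map $\beta_r:A\otimes A\to A\otimes K$, $a\otimes b\mapsto \sum ab_{(0)}\otimes b_{(1)}$, is bijective; the coinvariants are $A^{coK}=\mathbbm{k}$; and the same two facts for the left structure.

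\emph{Step 1: bijectivity of $\beta_r$.} I would write down the Hopf-theoretic inverse explicitly. On $H\otimes H$ consider
$$\beta(h\otimes g)=\sum hg_{(1)}\otimes g_{(2)}, \qquad \gamma(h\otimes g)=\sum hS(g_{(1)})\otimes g_{(2)}.$$
These are mutually inverse by the antipode axioms. One must check that $\beta$ descends to a map $H/\mathfrak{m}_\chi H\otimes H/\mathfrak{m}_\chi H\to H/\mathfrak{m}_\chi H\otimes H/\mathfrak{m}_{\ovep}H$, and that $\gamma$ descends in the opposite direction.
\begin{itemize}
\item For $\beta$: take $g=cg'$ with $c\in\mathfrak{m}_\chi$. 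Since $C$ is central and $\Delta(c)=\sum c_{(1)}\otimes c_{(2)}$, we get $\beta(h\otimes cg')=\sum c_{(1)}(\ldots)\otimes c_{(2)}(\ldots)$. Its image lies in the kernel of $H\otimes H\to H/\mathfrak{m}_\chi H\otimes H/\mathfrak{m}_{\ovep}H$, because $\Delta(\mathfrak{m}_{\chi\ast\ovep})\subseteq \mathfrak{m}_\chi\otimes C+C\otimes\mathfrak{m}_{\ovep}$. This is exactly why $\Delta_{\chi,\ovep}$ is well defined.
\item For $\gamma$: take $g$ with $c\in\mathfrak{m}_{\ovep}$. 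We need $\sum S(c_{(1)})\otimes c_{(2)}$ to map into $\mathfrak{m}_\chi\otimes C+C\otimes \mathfrak{m}_{\ovep}$ after multiplying by the first factor $h$. Evaluating with $\chi\otimes\ovep$ gives $\chi\ast S$ composed appropriately, i.e. $(\chi S\ast\ovep)$ at the level of the relevant character identity. The cleanest way is to compute in $G(C^\circ)$: the kernel condition reduces to $(\chi\cdot(\chi S\ast\ldots))$ being trivial, and this holds by $\chi\ast\chi S=\ovep$. The same argument handles $c\in\mathfrak{m}_\chi$ in the first slot.
\end{itemize}
Hence $\beta_r$ is bijective as a map from $A\otimes A$. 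Since $\mathbbm{k}\subseteq A^{coK}$, the map from $A\otimes_{A^{coK}}A$ is then a quotient of a bijection; once Step 2 gives $A^{coK}=\mathbbm{k}$, these coincide.

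\emph{Step 2: $A^{coK}=\mathbbm{k}$.} Let $a\in A$ with $\rho(a)=a\otimes 1$. Then $\beta_r(1\otimes a)=a\otimes 1=\beta_r(a\otimes 1)$, so injectivity of $\beta_r$ on $A\otimes_{\mathbbm{k}}A$ gives $1\otimes a=a\otimes 1$ in $A\otimes A$. This forces $a\in\mathbbm{k}1$, provided $A\neq0$.

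\emph{Step 3: nonvanishing of $A$.} We need $A=H/\mathfrak{m}H\neq 0$, which follows from the surjectivity of $\operatorname{maxSpec}H\to\operatorname{maxSpec}C$ recalled in \S\ref{sec-modufinalg}. The left Galois property follows by the mirror argument, using $h\otimes g\mapsto \sum h_{(1)}\otimes h_{(2)}g$ and the inverse built from $S$ in the same way.

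I expect the main obstacle to be the descent check for $\gamma$ in Step 1. It requires careful bookkeeping of how $S$ interacts with the kernels $\mathfrak{m}_\chi$ and $\mathfrak{m}_{\ovep}$ under $\Delta$, namely the identity $\chi\ast\chi S=\ovep$ in $G(C^\circ)$. Everything else is formal.
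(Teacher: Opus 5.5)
Your overall route is the paper's. The paper also uses bijectivity of the canonical map $\beta$ on $H/\mathfrak{m}H\otimes_{\mathbbm{k}}H/\mathfrak{m}H$, though it quotes this from the proof of \cite[Proposition 2.1 (1)]{HMQWChev2025} rather than writing down an inverse. It then gets $(H/\mathfrak{m}H)^{co(H/\mathfrak{m}_{\ovep}H)}=\mathbbm{k}$ from $\beta\circ\xi=\zeta$ with $\xi(\overline{h})=\overline{h}\otimes 1-1\otimes\overline{h}$, which is exactly your Step 2. Your Step 3 makes explicit the nonvanishing $H/\mathfrak{m}H\neq 0$ that the paper uses tacitly. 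However, the descent check for $\gamma$, which you rightly flagged as the only nonformal point, is set up incorrectly as written.

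The map $\gamma$ has to induce $H/\mathfrak{m}_\chi H\otimes H/\mathfrak{m}_{\ovep}H\to H/\mathfrak{m}_\chi H\otimes H/\mathfrak{m}_\chi H$. So for $c\in\mathfrak{m}_{\ovep}$ you need $\gamma(h\otimes cg)=\sum hS(g_{(1)})S(c_{(1)})\otimes c_{(2)}g_{(2)}$ to lie in $\mathfrak{m}_\chi H\otimes H+H\otimes\mathfrak{m}_\chi H$. Equivalently, $\sum S(c_{(1)})\otimes c_{(2)}$ must lie in $\mathfrak{m}_\chi\otimes C+C\otimes\mathfrak{m}_\chi=\text{Ker}(\chi\otimes\chi)$. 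This holds because $(\chi\otimes\chi)\bigl(\sum S(c_{(1)})\otimes c_{(2)}\bigr)=(\chi S\ast\chi)(c)=\ovep(c)=0$.

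The containment you wrote instead, into $\mathfrak{m}_\chi\otimes C+C\otimes\mathfrak{m}_{\ovep}=\text{Ker}(\chi\otimes\ovep)$, is false whenever $\chi\neq\ovep$. Evaluating gives $(\chi S\ast\ovep)(c)=\chi(S(c))$. Since $\mathfrak{m}_{\chi S}$ and $\mathfrak{m}_{\ovep}$ are then distinct maximal ideals, some $c\in\mathfrak{m}_{\ovep}$ has $\chi(S(c))\neq 0$. So the group identity you invoke is the right one, but it only applies once the target ideal is $\text{Ker}(\chi\otimes\chi)$.

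For the left structure, the inverse is $x\otimes y\mapsto\sum x_{(1)}\otimes S(x_{(2)})y$ from $H/\mathfrak{m}_{\ovep}H\otimes H/\mathfrak{m}_\chi H$ to $H/\mathfrak{m}_\chi H\otimes H/\mathfrak{m}_\chi H$. It needs $\sum c_{(1)}\otimes S(c_{(2)})\in\text{Ker}(\chi\otimes\chi)$ for $c\in\mathfrak{m}_{\ovep}$, i.e. $(\chi\ast\chi S)(c)=0$.

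With this correction, the remaining slot checks are immediate from centrality of $C$. The induced maps are then mutually inverse, because $\beta$ and $\gamma$ are and the quotient maps are surjective, and your argument is complete.
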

\begin{proof}
Clearly, $H/\mathfrak{m}H$ is an $H/\mathfrak{m}_{\ovep}H$-$H/\mathfrak{m}_{\ovep}H$ bicomodule algebra. We show that $H/\mathfrak{m}H$ is a right $H/\mathfrak{m}_{\ovep}H$-Galois object. The proof for the left case is symmetric.
It follows from the proof of \cite[Proposition 2.1 (1)]{HMQWChev2025} that $$\beta: H/\mathfrak{m}H\otimes H/\mathfrak{m}H\to  H/\mathfrak{m}H\otimes  H/\mathfrak{m}_{\ovep}H, \overline{h}\otimes \overline{g}\mapsto \sum_{(g)}\overline{h}\overline{g_{(1)}}\otimes \overline{g_{(2)}}$$ is a $\mathbbm{k}$-linear isomorphism. So it suffices to prove 
$(H/\mathfrak{m}H)^{co (H/\mathfrak{m}_{\ovep}H)}=\mathbbm{k}.$
Consider the maps
$$\xi:H/\mathfrak{m}H\to H/\mathfrak{m}H\otimes H/\mathfrak{m}H,\overline{h}\mapsto \overline{h}\otimes 1-1\otimes \overline{h}, \text{ and }$$
$$\zeta:H/\mathfrak{m}H\to H/\mathfrak{m}H\otimes H/\mathfrak{m}_{\ovep}H, \overline{h}\mapsto \overline{h}\otimes 1-\sum_{(h)}\overline{h_{(1)}}\otimes\overline{h_{(2)}}.$$
Obviously $\text{Ker}\,\xi=\mathbbm{k}$. It follows from the following commutative diagram
$$\begin{tikzcd}
H/\mathfrak{m}H \arrow[rr, "\xi"] \arrow[rd, "\zeta"'] & & H/\mathfrak{m}H\otimes H/\mathfrak{m}H \arrow[ld, "\beta"] \\
&H/\mathfrak{m}H\otimes H/\mathfrak{m}_{\ovep}H &  
\end{tikzcd}$$
and $\beta$ is an isomorphism that $\mathbbm{k}=\text{Ker}\,\xi=\text{Ker}\,\zeta=(H/\mathfrak{m}H)^{co (H/\mathfrak{m}_{\ovep}H)}$. 
\end{proof}
\begin{remark}
Let $\text{BiGal}(H/\mathfrak{m}_{\ovep}H)$ be the set of isomorphism classes of $H/\mathfrak{m}_{\ovep}H$-$H/\mathfrak{m}_{\ovep}H$-bi-Galois objects. By \cite[Theorem 4.3]{MR1408508}, $\text{BiGal}(H/\mathfrak{m}_{\ovep}H)$ is a group under the cotensor product $\square_{H/\mathfrak{m}_{\ovep}H}$. We remark that the natural map $\operatorname{maxSpec}C\to \text{BiGal}(H/\mathfrak{m}_{\ovep}H)$ given by $\mathfrak{m}\mapsto [H/\mathfrak{m}H]$ is a group homomorphism, see \cite[Corollary 2.6]{MR3530496}. 
\end{remark}
For convenience, in the remainder of this subsection we adopt the following notation. Let $\chi\in G(C^{\circ})$. The fiber algebra $H/\mathfrak{m}_{\chi}H$ will be abbreviated as $H_{\chi}$, its Jacobson radical will be denoted by $J_{\chi}$, and the quotient $H_{\chi}/J_{\chi}$ will be written as $\overline{H_{\chi}}$. The canonical projection from $H$ to $H_{\chi}$ will be denoted by $\pi_{\chi}:H\to H_{\chi}$.  Since $S(\mathfrak{m}_{\chi})=\mathfrak{m}_{\chi^{-1}}$, the antipode $S$ of $H$ induces an algebra anti-isomorphism $S_{\chi}:H_{\chi}\to H_{\chi^{-1}},\overline{h}\mapsto \overline{S(h)}$, and we denote by $\overline{S_{\chi}}$ the induced map from $\overline{H_{\chi}}$ to $\overline{H_{\chi^{-1}}}$.

Suppose the identity fiber algebra $H_{\ovep}$ has the Chevalley property, then by \cite[Theorem 2.1]{MR1995055} (see also the proof of \cite[Proposition 3.9]{HMQWChev2025}), for all $\chi\in G(C^{\circ})$ one has
\begin{equation}\label{eq-desbyjac-fibalgs1}
\Delta_{\chi,\ovep}(J_{\chi})\subseteq H_{\chi}\otimes J_{\ovep}+J_{\chi}\otimes H_{\ovep}\text{ and\ }\Delta_{\ovep,\chi}(J_{\chi})\subseteq H_{\ovep}\otimes J_{\chi}+J_{\ovep}\otimes H_{\chi}.
\end{equation}
Thus, under the assumption that $H_{\ovep}$ has the Chevalley property, it follows from \eqref{eq-desbyjac-fibalgs1} that for each $\chi\in G(C^{\circ})$, $\overline{H_{\chi}}$ is an $\overline{H_{\ovep}}$-$\overline{H_{\ovep}}$ bicomodule algebra. The corresponding comodule structure maps are denoted by $\overline{\Delta_{\chi,\ovep}}:\overline{H_{\chi}}\to \overline{H_{\chi}}\otimes \overline{H_{\ovep}}$ and $\overline{\Delta_{\ovep,\chi}}:\overline{H_{\chi}}\to \overline{H_{\ovep}}\otimes \overline{H_{\chi}}$. 

The main goal of this subsection is to show that \eqref{eq-desbyjac-fibalgs1} remains valid when $\ovep$ replaced by any character $\varphi\in G(C^{\circ})$ satisfying that $\dim_{\mathbbm{k}}\overline{H_{\varphi}}=\dim_{\mathbbm{k}}\overline{H_{\ovep}}$ (see Proposition \ref{thm-DelJinone-bardimeq1}). This will be a key ingredient in the proof of Theorem \ref{mainthm-tensred4}. We first need the following.
   
\begin{proposition}
Let $(H,C)$ be a pair of Hopf algebras satisfying the hypothesis (FinHopf) such that the identity fiber algebra $H_{\ovep}$ has the Chevalley property. Then, for any $\chi\in G(C^{\circ})$ satisfying 
 $\dim_{\mathbbm{k}}\overline{H_{\chi}}=\dim_{\mathbbm{k}}\overline{H_{\ovep}}$, the following hold.\label{prop-quobyJ-bgaloiobj-Chevs}
\begin{itemize}
\item[(a)] $\overline{H_{\chi}}$ is an $\overline{H_{\ovep}}$-$\overline{H_{\ovep}}$-bi-Galois object.
\item[(b)] There exists some $\theta\in G(H^{\circ})$ such that $\text{Ker } \theta \supseteq \m_{\ovep}$ and
$$\pi_{\chi}(S^{2}(h))=\overline{S^{2}(h)}=\sum_{(h)}\overline{\theta(h_{(1)})h_{(2)}}=\sum_{(h)}\theta(h_{(1)})\pi_{\chi}(h_{(2)})\in H_{\chi}$$
for all $h\in H$.
\end{itemize}
\end{proposition}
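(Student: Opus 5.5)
For part (a), I would first use \eqref{eq-desbyjac-fibalgs1} to regard $\overline{H_{\chi}}$ as an $\overline{H_{\ovep}}$-$\overline{H_{\ovep}}$ bicomodule algebra via $\overline{\Delta_{\chi,\ovep}}$ and $\overline{\Delta_{\ovep,\chi}}$, and then prove right Galoisness; the left case is symmetric. Consider the $\mathbbm{k}$-linear canonical map
$$\overline{\beta}:\overline{H_{\chi}}\otimes\overline{H_{\chi}}\to \overline{H_{\chi}}\otimes\overline{H_{\ovep}},\quad \overline{h}\otimes\overline{g}\mapsto \sum_{(g)}\overline{h}\,\overline{g_{(1)}}\otimes\overline{g_{(2)}}.$$
By \eqref{eq-desbyjac-fibalgs1}, the bijection $\beta$ from the proof of Lemma \ref{lem-biGalo-allfibalg1} maps $J_{\chi}\otimes H_{\chi}+H_{\chi}\otimes J_{\chi}$ into $J_{\chi}\otimes H_{\ovep}+H_{\chi}\otimes J_{\ovep}$. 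Hence $\overline{\beta}$ is induced by $\beta$ and is the composite of $\beta$ with the natural surjection $H_{\chi}\otimes H_{\ovep}\to\overline{H_{\chi}}\otimes\overline{H_{\ovep}}$, so $\overline{\beta}$ is surjective. The hypothesis $\dim_{\mathbbm{k}}\overline{H_{\chi}}=\dim_{\mathbbm{k}}\overline{H_{\ovep}}$ says that source and target have the same finite dimension, so $\overline{\beta}$ is bijective. The coinvariants are then computed exactly as in Lemma \ref{lem-biGalo-allfibalg1}: with $\xi(\overline{h})=\overline{h}\otimes1-1\otimes\overline{h}$ and $\zeta=\overline{\beta}\circ\xi$, we get $(\overline{H_{\chi}})^{co\overline{H_{\ovep}}}=\text{Ker}\,\zeta=\text{Ker}\,\xi=\mathbbm{k}$. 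Bijectivity over $\mathbbm{k}$ with trivial coinvariants is exactly the Galois object condition.

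For part (b), the plan is to produce an algebra endomorphism to which Lemma \ref{lem-ricomoen-wingdf-bgo} applies. Since $C$ is commutative, $S^{2}|_{C}=\text{id}$, so $S^{2}$ preserves $\mathfrak{m}_{\chi}H$ and induces an algebra automorphism $\pi_{\chi}(h)\mapsto\pi_{\chi}(S^{2}(h))$ of $H_{\chi}$. This automorphism preserves $J_{\chi}$ and so descends to $\overline{H_{\chi}}$. Because $H_{\ovep}$ has the Chevalley property, $\overline{H_{\ovep}}$ is a semisimple Hopf algebra in characteristic zero, so $S^{2}=\text{id}$ on $\overline{H_{\ovep}}$ by Larson--Radford. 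Since $\Delta S^{2}=(S^{2}\otimes S^{2})\Delta$, the induced map $\overline{S^{2}}$ on $\overline{H_{\chi}}$ is then right $\overline{H_{\ovep}}$-colinear. Part (a) together with Lemma \ref{lem-ricomoen-wingdf-bgo} would give a character $\overline{\theta}$ of $\overline{H_{\ovep}}$ with $\overline{S^{2}}(a)=\sum\overline{\theta}(a_{(-1)})a_{(0)}$ on $\overline{H_{\chi}}$. I would then define $\theta\in G(H^{\circ})$ as the composite $H\to H_{\ovep}\to\overline{H_{\ovep}}\xrightarrow{\overline{\theta}}\mathbbm{k}$. By construction $\text{Ker}\,\theta\supseteq\mathfrak{m}_{\ovep}$, which is the first assertion of (b).

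The main obstacle is to lift the identity from $\overline{H_{\chi}}$ to $H_{\chi}$ itself. I would try to show that the endomorphism $\Phi(\pi_{\chi}(h))=\sum\theta(S(h_{(1)}))\pi_{\chi}(S^{2}(h_{(2)}))$ of $H_{\chi}$, i.e.\ $S^{2}$ composed with the left winding by $\theta^{-1}$ (order to be adjusted), is the identity. By construction, $\Phi$ induces the identity on $\overline{H_{\chi}}$. To conclude, I would use that $H_{\chi}$ is a bi-Galois object over $H_{\ovep}$ (Lemma \ref{lem-biGalo-allfibalg1}): the aim is to show that $\Phi$ is right $H_{\ovep}$-colinear, so that Lemma \ref{lem-ricomoen-wingdf-bgo} makes it a left winding map on $H_{\chi}$ by some character of $H_{\ovep}$. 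Such a winding map is trivial on $\overline{H_{\chi}}$ only when the character is trivial on the image of the left coaction, which would force $\Phi=\text{id}$. Establishing this colinearity requires controlling $S^{2}$ on $H_{\ovep}$ itself rather than only on $\overline{H_{\ovep}}$. This is the delicate point, and it is where I am least sure the argument works. I would first test it on a basic Taft-type identity fiber to check whether the formula really holds in $H_{\chi}$ or only modulo $J_{\chi}$.
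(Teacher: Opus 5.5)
Your proof of (a), and of (b) up to the construction of $\theta$, is the paper's argument. For (a), the paper also obtains $\overline{\beta}_{l},\overline{\beta}_{r}$ as surjections induced by the Galois maps of Lemma \ref{lem-biGalo-allfibalg1} and concludes by the dimension count; your explicit coinvariant check spells out what it leaves implicit. For (b), the paper applies Lemma \ref{lem-ricomoen-wingdf-bgo} to $\overline{S_{\chi^{-1}}}\,\overline{S_{\chi}}$, which is your $\overline{S^{2}}$, on the bi-Galois object $\overline{H_{\chi}}$. It uses Larson--Radford on $\overline{H_{\ovep}}$ for right colinearity and then pulls the character back to $H$. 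The paper stops there. Despite the ``$\in H_{\chi}$'' in the display, its proof gives the formula only in $\overline{H_{\chi}}$, i.e.\ modulo $J_{\chi}$. That is all that is used later: in the proof of Proposition \ref{thm-DelJinone-bardimeq1} the formula is only evaluated under functionals $g\in(\overline{H_{\chi}})^{*}$, which vanish on $J_{\chi}$.

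The lift to $H_{\chi}$ that you attempt cannot succeed, because the formula is false in $H_{\chi}$ in general. Take $H$ to be the $n^{2}$-dimensional Taft algebra ($g^{n}=1$, $x^{n}=0$, $xg=\epsilon gx$, $\Delta(x)=x\otimes g+1\otimes x$) with $C=\mathbbm{k}$. Then $H_{\chi}=H_{\ovep}=H$ is basic and all hypotheses hold. One has $S(x)=-xg^{-1}$ and $S^{2}(x)=gxg^{-1}=\epsilon^{-1}x$. Every character $\theta$ of $H$ kills the nilpotent element $x$, so $\sum\theta(x_{(1)})x_{(2)}=\theta(x)g+x=x\neq\epsilon^{-1}x$. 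The same computation works in every fiber of Example \ref{eg-aCheegwith1dimz-u2}, where $x\notin\mathfrak{m}_{\chi}H$.

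In your plan, the step that breaks is the right $H_{\ovep}$-colinearity of $\Phi$. One has $\Delta_{\chi,\ovep}\circ S^{2}=(S^{2}\otimes S^{2})\circ\Delta_{\chi,\ovep}$, and $S^{2}\neq\mathrm{id}$ on the non-semisimple Hopf algebra $H_{\ovep}$ (on the Taft algebra it has order $n$). Larson--Radford kills $S^{2}$ only after passing to $\overline{H_{\ovep}}$. So drop the lifting step and read (b) as an identity in $\overline{H_{\chi}}$; in that form your argument is complete. Your proposed Taft test would have shown exactly this.
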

\begin{proof}
(a) By Lemma \ref{lem-biGalo-allfibalg1}, the fiber algebra $H_{\chi}$ is an $H_{\ovep}$-$H_{\ovep}$-bi-Galois object. It follows that the left Galois map $\beta_{l}:H_{\chi}\otimes H_{\chi}\to   H_{\ovep}\otimes H_{\chi}$ and the right Galois map $\beta_{r}:H_{\chi}\otimes H_{\chi}\to H_{\chi}\otimes H_{\ovep}$ induce respectively surjections
\begin{equation}\label{eq-galoimap-barn7}
\overline{\beta}_{l}:\overline{H_{\chi}}\otimes \overline{H_{\chi}}\to  \overline{H_{\ovep}}\otimes \overline{H_{\chi}}\text{ and}\    \overline{\beta}_{r}:\overline{H_{\chi}}\otimes \overline{H_{\chi}}\to \overline{H_{\chi}}\otimes \overline{H_{\ovep}}.
\end{equation}
Since $\dim_{\mathbbm{k}}\overline{H_{\chi}}=\dim_{\mathbbm{k}}\overline{H_{\ovep}}$, it is clear that both maps in \eqref{eq-galoimap-barn7} are isomorphisms. This implies that the bicomodule algebra $\overline{H_{\chi}}$ is an $\overline{H_{\ovep}}$-$\overline{H_{\ovep}}$-bi-Galois object.
\par(b) Since $\overline{H_{\ovep}}$ is a semisimple Hopf algebra over $\mathbbm{k}$, it follows from a classical result of Larson and Radford \cite{MR957441,MR926744} that $\overline{S_{\ovep}}^{2} = \text{id}$. Note that the following diagram commutes:
$$
\begin{tikzcd}
& \overline{H_{\chi}} \arrow[rr,"{\overline{\Delta}_{\chi,\ovep}}"] \arrow[ld,"\overline{S_{\chi}}"'] &  & \overline{H_{\chi}}
\otimes \overline{H_{\ovep}} \arrow[rd, "\overline{S_{\chi}} \otimes \overline{S_{\ovep}}"] &                             \\                                                           
\overline{H_{\chi^{-1}}} \arrow[rd,"\overline{S_{\chi^{-1}}}"'] &                               &  &
& \overline{H_{\chi^{-1}}} \otimes \overline{H_{\ovep}} \arrow[ld, "\overline{S_{\chi^{-1}}} \otimes \overline{S_{\ovep}}"] \\
   & \overline{H_{\chi}} \arrow[rr,"{\overline{\Delta_{\chi,\ovep}}}"]   &   &
\overline{H_{\chi}}\otimes \overline{H_{\ovep}}  &
\end{tikzcd}
$$
It follows that $\overline{S_{\chi^{-1}}} \overline{S_{\chi}}:\overline{H_{\chi}}\to \overline{H_{\chi}} $ is a right $\overline{H_{\ovep}}$-comodule algebra endomorphism of $\overline{H_{\chi}}$. Hence,  $\overline{H_{\chi}}$ is an $\overline{H_{\ovep}}$-$\overline{H_{\ovep}}$-bi-Galois object by (a). By Lemma \ref{lem-ricomoen-wingdf-bgo}, there is a character $\theta_{0}:\overline{H_{\ovep}}\to\mathbbm{k}$  such that 
$$\overline{S^{2}(h)} = \overline{S_{\chi^{-1}}} \overline{S_{\chi}}(\overline{h}) = \sum_{(h)} \theta_{0}(\overline{h_{(1)}}) \overline{h_{(2)}}$$
for all $h \in H.$ Then $\theta:H\to\mathbbm{k}, h\mapsto \theta_{0}(\overline{h})$ is the desired character.
\end{proof}

The following isomorphism will be used in the proof of Proposition \ref{thm-DelJinone-bardimeq1}. For any two finite-dimensional $\mathbbm{k}$-algebras $A,B$, one has a canonical isomorphism
\begin{equation} \label{a-cano-alg-iso}
    \Phi_{A,B}: A\otimes B\to \text{Hom}_{\mathbbm{k}}(B^{*},A),a\otimes b\mapsto \big(g\mapsto g(b)a\big)
\end{equation}
which is natural in each variable. If $\text{Hom}_{\mathbbm{k}}(B^{*},A)$ is regarded as a $\mathbbm{k}$-algebra under the convolution product, then it is easy to verify that $\Phi_{A,B}$ is an algebra isomorphism. 

Now, for any $\varphi,\psi\in G(C^{\circ})$, let $\Gamma_{\varphi\ast \psi}$ be the composition of the following maps:
$$\begin{tikzcd}
H_{\varphi\ast\psi} \arrow[r, "{\Delta_{\varphi,\psi}}"] & H_{\varphi}\otimes H_{\psi} \arrow[r, "\text{id}\otimes \pi_{\psi}"] & H_{\varphi}\otimes \overline{H_{\psi}} \arrow[r, "{\Phi_{H_{\varphi},\overline{H_{\psi}}}}"] & {\text{Hom}_{\mathbbm{k}}((\overline{H_{\psi}})^{*},H_{\varphi})}.
\end{tikzcd}$$
For any $h\in H$ and $f\in (\overline{H_{\psi}})^{*}$, then, by \eqref{a-cano-alg-iso}
\begin{equation}\label{eq-Gamactf1}
\Gamma_{\varphi\ast\psi}(\pi_{\varphi\ast\psi}(h))(f)=\sum_{(h)}\pi_{\varphi}(h_{(1)})f(\overline{\pi_{\psi}(h_{(2)})})\in H_{\varphi}.
\end{equation}
Since $(\overline{H_{\psi}})^{*}$ can be viewed naturally as a subcoalgebra of $(H_{\psi})^{*}$, sometimes we identify $(\overline{H_{\psi}})^{*}$ with its image in $(H_{\psi})^{*}$. Then,  each $f\in (\overline{H_{\psi}})^{*}$ induces a map via \eqref{eq-Gamactf1}:
$$(f\rightharpoonup -): H_{\varphi\ast\psi} \to H_{\varphi}, x \mapsto (f\rightharpoonup x),$$
where, for any $h\in H$,
 $$f\rightharpoonup \pi_{\varphi\ast \psi}(h) \coloneqq \sum_{(h)}\pi_{\varphi}(h_{(1)})f( \pi_{\psi}(h_{(2)})).$$ The following lemma will also be needed in the proof of Proposition \ref{thm-DelJinone-bardimeq1}.

\begin{lemma} \label{lem-Hpbarcoacthg2}
Assume that $(H,C)$ is a pair of Hopf algebras satisfying the hypothesis (FinHopf). Let $\varphi,\psi\in G(C^{\circ})$.
\begin{itemize}
\item[(a)] For any $h\in H$ and $f\in (\overline{H_{\psi}})^{*}$,
$$\sum_{(f)}f_{(1)}\rightharpoonup (f_{(2)}\overline{S_{\psi^{-1}}}\rightharpoonup  \pi_{\varphi}(h))=\pi_{\varphi}(h)\varepsilon_{(\overline{H_{\psi}})^{*}}(f).$$ 
\item[(b)] For any $h, k\in H$ and $f\in (\overline{H_{\psi}})^{*}$,
$$
\sum_{(f)}(f_{(1)}\rightharpoonup \pi_{\varphi\ast\psi}(h))(f_{(2)}\rightharpoonup \pi_{\varphi\ast\psi}(k))=f\rightharpoonup \pi_{\varphi\ast\psi}(hk).
$$
\end{itemize}
\end{lemma}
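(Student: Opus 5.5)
Both identities reduce to Sweedler computations once the maps involved are known to be well defined. The plan is to fix that bookkeeping first, then compute each side directly. Throughout we use only two facts: $\Delta$ is an algebra map, and the coalgebra structure of $(\overline{H_{\psi}})^{*}$ is dual to the multiplication of the finite-dimensional algebra $\overline{H_{\psi}}$. Concretely, $\sum_{(f)} f_{(1)}(\overline{x})f_{(2)}(\overline{y})=f(\overline{xy})$ and $\varepsilon_{(\overline{H_{\psi}})^{*}}(f)=f(\overline{1})$.

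\emph{Step 1: well-definedness.} For $\chi,\psi\in G(C^{\circ})$ and $c\in C$ we have
\[
\Delta(c)-\sum_{(c)}\chi(c_{(1)})\otimes c_{(2)}\in \mathfrak{m}_{\chi}\otimes C \quad\text{modulo }\ \mathfrak{m}_{\chi}H\otimes H .
\]
From this (equivalently, from \eqref{eq-Del-fibers}) it follows that $\Delta(\mathfrak{m}_{\chi\ast\psi}H)\subseteq \mathfrak{m}_{\chi}H\otimes H+H\otimes \mathfrak{m}_{\psi}H$. Since $f\in(\overline{H_{\psi}})^{*}$ is regarded as a functional on $H_{\psi}$ vanishing on $J_{\psi}$, the map $f\rightharpoonup -\colon H_{\chi\ast\psi}\to H_{\chi}$ is well defined for every pair of characters.

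In (a) the same construction is applied to the pair $(\varphi\ast\psi,\psi^{-1})$. Here we use the following points:
\begin{itemize}
\item $(\varphi\ast\psi)\ast\psi^{-1}=\varphi$.
\item $S_{\psi^{-1}}\colon H_{\psi^{-1}}\to H_{\psi}$ is an algebra anti-isomorphism, so it carries $J_{\psi^{-1}}$ onto $J_{\psi}$.
\item Consequently $\overline{S_{\psi^{-1}}}$ exists, and $f_{(2)}\overline{S_{\psi^{-1}}}\in(\overline{H_{\psi^{-1}}})^{*}$.
\end{itemize}
Hence $f_{(2)}\overline{S_{\psi^{-1}}}\rightharpoonup-$ maps $H_{\varphi}\to H_{\varphi\ast\psi}$, and $f_{(1)}\rightharpoonup-$ then maps back to $H_{\varphi}$, so both sides of (a) live in $H_{\varphi}$. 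I expect this bookkeeping, not the computation, to be the only real obstacle.

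\emph{Step 2: proof of (a).} First,
\[
f_{(2)}\overline{S_{\psi^{-1}}}\rightharpoonup\pi_{\varphi}(h)=\sum_{(h)}\pi_{\varphi\ast\psi}(h_{(1)})\,f_{(2)}(\pi_{\psi}(S(h_{(2)}))).
\]
Applying $f_{(1)}\rightharpoonup-$ and using coassociativity gives
\[
\sum\pi_{\varphi}(h_{(1)})\,f_{(1)}(\overline{\pi_{\psi}(h_{(2)})})\,f_{(2)}(\overline{\pi_{\psi}(S(h_{(3)}))}).
\]
By duality of the coproduct of $(\overline{H_{\psi}})^{*}$ with the multiplication of $\overline{H_{\psi}}$, this equals
\[
\sum\pi_{\varphi}(h_{(1)})\,f(\overline{\pi_{\psi}(h_{(2)}S(h_{(3)}))}).
\]
The antipode axiom gives $h_{(2)}S(h_{(3)})\to\varepsilon(h_{(2)})1$, so the expression becomes $\pi_{\varphi}(h)f(\overline{1})=\pi_{\varphi}(h)\varepsilon_{(\overline{H_{\psi}})^{*}}(f)$, which is (a).

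\emph{Step 3: proof of (b).} The left side is
\[
\sum\pi_{\varphi}(h_{(1)})\pi_{\varphi}(k_{(1)})\,f_{(1)}(\overline{\pi_{\psi}(h_{(2)})})\,f_{(2)}(\overline{\pi_{\psi}(k_{(2)})}).
\]
Since $\pi_{\varphi}$ and $\pi_{\psi}$ are algebra maps, this equals
\[
\sum\pi_{\varphi}(h_{(1)}k_{(1)})\,f(\overline{\pi_{\psi}(h_{(2)}k_{(2)})}).
\]
Because $\Delta(hk)=\sum h_{(1)}k_{(1)}\otimes h_{(2)}k_{(2)}$, this is exactly $f\rightharpoonup\pi_{\varphi\ast\psi}(hk)$, which is (b).
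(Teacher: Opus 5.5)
Your proof is correct and follows the paper's proof. Part (a) is the same Sweedler computation: expand both $\rightharpoonup$ actions, merge $f_{(1)},f_{(2)}$ using the duality between the coproduct of $(\overline{H_{\psi}})^{*}$ and the multiplication of $\overline{H_{\psi}}$, then apply $\sum h_{(1)}\otimes h_{(2)}S(h_{(3)})=h\otimes 1$. Part (b) is the ``direct computation with the same style'' that the paper leaves to the reader.

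Your Step 1 spells out well-definedness, which the paper leaves implicit. Its displayed congruence is a bit garbled; the clean justification is $\Delta(\mathfrak{m}_{\chi\ast\psi})\subseteq\ker(\chi\otimes\psi)=\mathfrak{m}_{\chi}\otimes C+C\otimes\mathfrak{m}_{\psi}$, which together with multiplicativity of $\Delta$ gives the inclusion you need.
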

\begin{proof} (a) Since $\Delta(f)=\sum f_{(1)} \otimes f_{(2)} \in (H_{\psi})^{*} \otimes (H_{\psi})^{*}$ and $\pi_{\varphi}= \pi_{(\varphi\ast\psi)\ast\psi^{-1}}$, 
$$f_{(2)}\overline{S_{\psi^{-1}}}\in (\overline{H_{\psi^{-1}}})^{*}\text{ and\ }f_{(2)}\overline{S_{\psi^{-1}}}\rightharpoonup  \pi_{\varphi}(h)\in H_{\varphi\ast\psi}.$$
Then
\begin{align*}
&\sum_{(f)}f_{(1)}\rightharpoonup (f_{(2)}\overline{S_{\psi^{-1}}}\rightharpoonup  \pi_{\varphi}(h))\\
=& \sum_{(f),(h)}f_{(1)}\rightharpoonup  \pi_{\varphi\ast\psi}(h_{(1)})f_{(2)}(\pi_{\psi}(S(h_{(2)})))\\
=&\sum_{(f),(h)}\pi_{\varphi}(h_{(1)})f_{(1)}(\pi_{\psi}(h_{(2)}))f_{(2)}(\pi_{\psi}(S(h_{(3)})))\\
=&\sum_{(h)}\pi_{\varphi}(h_{(1)})f(\pi_{\psi}(h_{(2)})\pi_{\psi}(S(h_{(3)})))\\
=&\pi_{\varphi}(h)f(\pi_{\psi}(1))\\
=&\pi_{\varphi}(h)\varepsilon_{(\overline{H_{\psi}})^{*}}(f).
\end{align*}
So, (a) holds. 

(b) follows a direct computation with the same style as in (a).
\end{proof}

\begin{proposition}
Let $(H,C)$ be a pair of Hopf algebras satisfying the hypothesis (FinHopf) such that the identity fiber algebra $H_{\ovep}$ has the Chevalley property. 
Suppose $\varphi,\chi\in G(C^{\circ})$, and either $\dim_{\mathbbm{k}}\overline{H_{\varphi}}=\dim_{\mathbbm{k}}\overline{H_{\ovep}}$ or $\dim_{\mathbbm{k}}\overline{H_{\chi}}=\dim_{\mathbbm{k}}\overline{H_{\ovep}}$ holds. Then
\begin{itemize}
\item[(a)]   
$\Delta_{\varphi,\chi}(J_{\varphi\ast\chi})\subseteq J_{\varphi}\otimes H_{\chi}+H_{\varphi}\otimes J_{\chi}.$ \label{thm-DelJinone-bardimeq1} 
\item[(b)] 
for any irreducible $H_{\varphi}$-module $V$ and any irreducible $H_{\chi}$-module $W$, $V\otimes W$ is completely reducible as an $H$-module .
\end{itemize}
\end{proposition}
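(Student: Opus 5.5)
We first note that (b) follows from (a). The $H$-action on $V\otimes W$ factors through $\Delta_{\varphi,\chi}:H_{\varphi\ast\chi}\to H_{\varphi}\otimes H_{\chi}$. Since $J_{\varphi}V=0$ and $J_{\chi}W=0$, part (a) gives $J_{\varphi\ast\chi}(V\otimes W)=0$. Hence $V\otimes W$ is a module over the semisimple algebra $\overline{H_{\varphi\ast\chi}}$, so it is completely reducible. It therefore suffices to prove (a).

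\medskip

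\noindent\textbf{Strategy for (a).} We treat the case $\dim_{\mathbbm{k}}\overline{H_{\chi}}=\dim_{\mathbbm{k}}\overline{H_{\ovep}}$; the other case is symmetric, using left coactions and $\Delta_{\ovep,\chi}$. Set $I\coloneqq\Delta_{\varphi,\chi}^{-1}(J_{\varphi}\otimes H_{\chi}+H_{\varphi}\otimes J_{\chi})$. This is an ideal of $H_{\varphi\ast\chi}$, and we must show $J_{\varphi\ast\chi}\subseteq I$. Equivalently, we show that the composite
\[
\rho:\ H_{\varphi\ast\chi}\xrightarrow{\ \Delta_{\varphi,\chi}\ }H_{\varphi}\otimes H_{\chi}\longrightarrow\overline{H_{\varphi}}\otimes\overline{H_{\chi}}
\]
has semisimple image, for then $J_{\varphi\ast\chi}$ maps to zero.

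\medskip

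\noindent\textbf{Step 1: the image of $\Gamma_{\varphi\ast\chi}$.} Take $\psi=\chi$. The map $\Gamma_{\varphi\ast\chi}$ sends $H_{\varphi\ast\chi}$ into the algebra $\mathrm{Hom}_{\mathbbm{k}}((\overline{H_{\chi}})^{*},H_{\varphi})\cong H_{\varphi}\otimes\overline{H_{\chi}}$. Lemma \ref{lem-Hpbarcoacthg2}(a) says that the operators $f\rightharpoonup-$ and $f\overline{S_{\chi^{-1}}}\rightharpoonup-$ are mutually inverse, in the convolution sense, between $H_{\varphi\ast\chi}$ and $H_{\varphi}$. Combining this with Proposition \ref{prop-quobyJ-bgaloiobj-Chevs}(a), which says $\overline{H_{\chi}}$ is an $\overline{H_{\ovep}}$-bi-Galois object (so a Galois-type map is invertible), we aim to show the following: $(\mathrm{id}\otimes\pi_{\chi})\Delta_{\varphi,\chi}$ is injective, and its image is a ``smash-like'' subalgebra that is the invariants of a suitable coaction.

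\medskip

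\noindent\textbf{Step 2: identifying the radical.} Lemma \ref{lem-Hpbarcoacthg2}(b) shows that $\Gamma_{\varphi\ast\chi}$ is multiplicative; this is how the algebra structure enters. Using Step 1, we then identify $J_{\varphi\ast\chi}$ with the preimage of $J_{\varphi}\otimes\overline{H_{\chi}}$. Heuristically, $H_{\varphi\ast\chi}\cong H_{\varphi}\otimes\overline{H_{\chi}}$ in a twisted way, similar to a crossed product by the semisimple Hopf algebra $\overline{H_{\ovep}}$. In such a crossed product the radical of $H_{\varphi}$ extends to the radical, and the quotient is semisimple. For the semisimplicity of the quotient we use characteristic zero and the fact that $\overline{H_{\ovep}}$ is semisimple and cosemisimple (Larson–Radford).

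Proposition \ref{prop-quobyJ-bgaloiobj-Chevs}(b) gives $S^{2}$ on $H_{\chi}$ as a winding automorphism. We use it to check that the composite of the antipode maps $\overline{S_{\chi^{-1}}}\,\overline{S_{\chi}}$ respects the radicals, which is needed to make the inverse operators well defined modulo $J$.

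\medskip

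\noindent\textbf{Main obstacle.} The crux is Step 2: proving that the image of $J_{\varphi\ast\chi}$ lies in $J_{\varphi}\otimes H_{\chi}+H_{\varphi}\otimes J_{\chi}$, not merely that the reverse containment holds modulo nilpotents. Our first attempt is a nilpotency-plus-dimension argument. The ideal $\Delta_{\varphi,\chi}(J_{\varphi\ast\chi})$ is nilpotent, so its image in $\overline{H_{\varphi}}\otimes\overline{H_{\chi}}$ is a nilpotent ideal of the image algebra. We then show that this image algebra is semisimple, by realising it, via Lemma \ref{lem-Hpbarcoacthg2}, as a Galois extension of $\overline{H_{\varphi}}$ by the semisimple Hopf algebra $\overline{H_{\ovep}}$. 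A nilpotent ideal of a semisimple algebra is zero, which gives the required containment.
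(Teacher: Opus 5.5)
Your deduction of (b) from (a) is the same as the paper's. Reducing (a) to semisimplicity of $\rho(H_{\varphi\ast\chi})$ is also logically valid, but it gains nothing: if (a) holds, then $\rho$ factors through the semisimple algebra $\overline{H_{\varphi\ast\chi}}$. So ``the image of $\rho$ is semisimple'' is just a reformulation of (a).

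Everything therefore rests on Step 2, and the mechanism you propose there cannot work as stated.
\begin{itemize}
\item Take $\varphi=\chi=\ovep$. The image is $\overline{\Delta_{\ovep,\ovep}}(\overline{H_{\ovep}})\cong\overline{H_{\ovep}}$, which has the same dimension as $\overline{H_{\varphi}}$. It therefore cannot be an $\overline{H_{\ovep}}$-Galois extension of a copy of $\overline{H_{\varphi}}$ unless $\overline{H_{\ovep}}=\mathbbm{k}$.
\item The bijective Galois map in Lemma \ref{lem-biGalo-allfibalg1} forces all fiber algebras to have the same dimension. So $\dim H_{\varphi\ast\chi}=\dim H_{\varphi}$, and $H_{\varphi\ast\chi}$ is not a crossed product or twisted tensor product of $H_{\varphi}$ with $\overline{H_{\chi}}$.
\item What is true is that $\rho(H_{\varphi\ast\chi})\subseteq\overline{H_{\varphi}}\,\square_{\overline{H_{\ovep}}}\,\overline{H_{\chi}}$, using \eqref{eq-desbyjac-fibalgs1}. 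But semisimplicity does not pass to subalgebras. Even a proof that this cotensor product is semisimple would leave you needing equality, and you offer nothing for that.
\item Your use of Proposition \ref{prop-quobyJ-bgaloiobj-Chevs}(b) is vacuous: $\overline{S_{\chi^{-1}}}\,\overline{S_{\chi}}$ is an algebra automorphism, so it preserves radicals automatically.
\end{itemize}

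The missing idea is to bypass the image algebra and prove directly that the subspace $(\overline{H_{\chi}})^{*}\rightharpoonup J_{\varphi\ast\chi}$ of $H_{\varphi}$ lies in $J_{\varphi}$. Via $\Phi_{H_{\varphi},\overline{H_{\chi}}}$ this says $(\mathrm{id}\otimes\pi_{\chi})\Delta_{\varphi,\chi}(J_{\varphi\ast\chi})\subseteq J_{\varphi}\otimes\overline{H_{\chi}}$, which gives (a). The paper argues in three steps.
\begin{itemize}
\item Lemma \ref{lem-Hpbarcoacthg2} yields $(f\rightharpoonup x)\pi_{\varphi}(k)=\sum f_{(1)}\rightharpoonup\bigl(x\,(f_{(2)}\overline{S_{\chi^{-1}}}\rightharpoonup\pi_{\varphi}(k))\bigr)$. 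Hence the subspace is a right ideal, so it is closed under powers. In characteristic zero it is therefore nil, and so contained in $J_{\varphi}$, as soon as every left multiplication $(f\rightharpoonup x)_{l}$ has trace zero on $H_{\varphi}$.
\item The same identity writes $(f\rightharpoonup x)_{l}$ as a composite passing through $H_{\varphi\ast\chi}$. Use cyclicity of the trace together with the character $\theta\in(\overline{H_{\ovep}})^{*}$ from Proposition \ref{prop-quobyJ-bgaloiobj-Chevs}(b), which turns $S^{2}$ on $H_{\chi}$ into a winding map. This is exactly where the hypothesis $\dim\overline{H_{\chi}}=\dim\overline{H_{\ovep}}$ enters. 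The result is $\mathrm{tr}_{H_{\varphi}}((f\rightharpoonup x)_{l})=\varepsilon_{(\overline{H_{\chi}})^{*}}(f)\,\mathrm{tr}_{H_{\varphi\ast\chi}}((\theta\rightharpoonup -)\circ x_{l})$.
\item Applying \eqref{eq-desbyjac-fibalgs1} to the fiber $\varphi\ast\chi$ gives $\theta\rightharpoonup J_{\varphi\ast\chi}\subseteq J_{\varphi\ast\chi}$. Hence $(\theta\rightharpoonup -)\circ x_{l}$ is nilpotent for $x\in J_{\varphi\ast\chi}$, and the trace vanishes.
\end{itemize}
This trace computation, not any structural identification of the image, is the heart of the proof.
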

\begin{proof}

(a) We claim that, if $\dim_{\mathbbm{k}}\overline{H_{\chi}}=\dim_{\mathbbm{k}}\overline{H_{\ovep}}$, then for any $\varphi\in G(C^{\circ})$, one has
\begin{equation}\label{eq-clDechiu1}
(\overline{H}_{\chi})^{*}\rightharpoonup J_{\varphi\ast\chi}\subseteq J_{\varphi}.
\end{equation}
Indeed, once \eqref{eq-clDechiu1} is established, it follows from the following commutative diagram that $\Delta_{\varphi,\chi}(J_{\varphi,\chi})\subseteq \text{Ker}(\pi_{\varphi}\otimes \pi_{\chi})$. Consequently, we obtain that $\Delta_{\varphi,\chi}(J_{\varphi\ast\chi})\subseteq J_{\varphi}\otimes H_{\chi}+H_{\varphi}\otimes J_{\chi}.$
$$\begin{tikzcd}
H_{\varphi}\otimes \overline{H_{\chi}} \arrow[d, "\pi_{\varphi}\otimes \text{id}"'] \arrow[rr, "{\Phi_{H_{\varphi},\overline{H_{\chi}}}}"] &  & {\text{Hom}_{\mathbbm{k}}((\overline{H_{\chi}})^{*}, H_{\varphi})} \arrow[d, "(\pi_{\varphi})_{*}"] \\
\overline{H_{\varphi}}\otimes \overline{H_{\chi}} \arrow[rr, "{\Phi_{\overline{H_{\varphi}},\overline{H_{\chi}}}}"]                        &  & {\text{Hom}_{\mathbbm{k}}((\overline{H_{\chi}})^{*}, \overline{H_{\varphi}})}                      
\end{tikzcd}$$
It suffices to prove \eqref{eq-clDechiu1}, which is divided in two steps. We show that $(\overline{H_{\chi}})^{*}\rightharpoonup J_{\varphi\ast\chi}$ is a right ideal of $H_{\varphi}$ first, then we prove that every element in this right ideal is nilpotent, which shows that \eqref{eq-clDechiu1} holds.

Now we show $(\overline{H_{\chi}})^{*}\rightharpoonup J_{\varphi\ast\chi}$ is a right ideal of $H_{\varphi}$.
Assume that $h,k\in H$ and $f\in (\overline{H_{\chi}})^{*}$. Then
\begin{align*}
&(f\rightharpoonup \pi_{\varphi\ast\chi}(h))\pi_{\varphi}(k)\\
=& \sum_{(f)}(f_{(1)}\rightharpoonup  \pi_{\varphi\ast \chi}(h))\pi_{\varphi}(k)\varepsilon_{(\overline{H_{\chi}})^{*}}(f_{(2)})\\
=& \sum_{(f)}(f_{(1)}\rightharpoonup  \pi_{\varphi\ast \chi}(h))(f_{(2)}\rightharpoonup (f_{(3)}\overline{S_{\chi^{-1}}}\rightharpoonup  \pi_{\varphi}(k))) & (\text{By Lemma}\ref{lem-Hpbarcoacthg2} (a)) \\
=&\sum_{(f)} f_{(1)}\rightharpoonup  (\pi_{\varphi\ast \chi}(h) (f_{(2)}\overline{S_{\chi^{-1}}}\rightharpoonup  \pi_{\varphi}(k)) ) & (\text{By Lemma}\ref{lem-Hpbarcoacthg2} (b)).
\end{align*}
Thus, if $\pi_{\varphi\ast\chi}(h)\in J_{\varphi\ast\chi}$, then
$$(f\rightharpoonup \pi_{\varphi\ast\chi}(h))\pi_{\varphi}(k)\in ((\overline{H_{\chi}})^{*}\rightharpoonup J_{\varphi\ast\chi} ),$$ for all $k\in H$. This shows that $(\overline{H_{\chi}})^{*}\rightharpoonup J_{\varphi\ast\chi}$ is a right ideal of $H_{\varphi}$.

Next, we show every element in  $(\overline{H_{\chi}})^{*}\rightharpoonup J_{\varphi\ast\chi}$ is nilpotent.  For any $x\in H_{\varphi}$, let $x_{l}$ denote the left-multiplication operator on $H_{\varphi}$ induced by $x$. Let $\text{tr}_{H_{\varphi}}:\text{End}_{\mathbbm{k}}(H_{\varphi})\to\mathbbm{k}$ be the usual trace map on the endomorphism algebra of $H_{\varphi}$.
By Cayley-Hamilton theorem, to prove that the right ideal $(\overline{H_{\chi}})^{*}\rightharpoonup J_{\varphi\ast\chi}$  is nil, it suffices to show that for any $f\in (\overline{H_{\chi}})^{*}$ and $x\in J_{\varphi\ast\chi}$,
\begin{equation}\label{eq-tractnil=0uJ}
\text{tr}_{H_{\varphi}}((f\rightharpoonup x)_{l})=0.
\end{equation}
In the discussion of the previous step, we obtained that for any $h\in H,f\in (\overline{H_{\chi}})^{*}$,
$$(f\rightharpoonup \pi_{\varphi\ast\chi}(h))_{l}=\sum_{(f)} f_{(1)}\rightharpoonup  (\pi_{\varphi\ast \chi}(h) (f_{(2)}\overline{S_{\chi^{-1}}}\rightharpoonup -) )\in \text{End}_{\mathbbm{k}}(H_{\varphi}).$$
It follows that
\begin{equation}\label{eq-profnilussetrid1}
\text{tr}_{H_{\varphi}}((f\rightharpoonup \pi_{\varphi\ast\chi}(h))_{l})=\sum_{(f)}\text{tr}_{H_{\varphi\ast\chi}}((f_{(2)}\overline{S_{\chi^{-1}}}\rightharpoonup -)\circ (f_{(1)}\rightharpoonup -)\circ(\pi_{\varphi\ast\chi}(h))_{l}).
\end{equation}
Since $\dim_{\mathbbm{k}}\overline{H_{\chi}}=\dim_{\mathbbm{k}}\overline{H_{\ovep}}$, $\overline{H_{\chi}}$ is an $\overline{H}_{\ovep}$-$\overline{H}_{\ovep}$-bi-Galois object, by Proposition \ref{prop-quobyJ-bgaloiobj-Chevs} (a). It follows from Proposition \ref{prop-quobyJ-bgaloiobj-Chevs} (b) that there exists a character $\theta\in G(H^{\circ})$ such that $\text{Ker }\theta\supseteq \m_{\ovep}$ (thus  $\theta $ can be regarded as an element of $(\overline{H_{\ovep}})^{*}$) and
\begin{equation}\label{eq-3st2uwindthete}
\pi_{\chi}(S^{2}(h))= \sum_{(h)}\theta(h_{(1)})\pi_{\chi}(h_{(2)})\in H_{\chi}.
\end{equation}
For any $k\in H$ and $g\in (\overline{H_{\chi}})^{*}$, we compute
\begin{align*}
g\rightharpoonup \pi_{\varphi\ast\chi}(k)=& \sum_{(k)}\pi_{\varphi}(k_{(1)})g(\pi_{\chi}(k_{(2)}))\\
=&  \sum_{(k)}\pi_{\varphi}(k_{(1)})g\overline{S_{\chi^{-1}}}\overline{S_{\chi}}(\overline{\pi_{\chi}(S^{2}(k_{(2)})}))\\
=&\sum_{(k)}\pi_{\varphi}(k_{(1)})g\overline{S_{\chi^{-1}}}\overline{S_{\chi}}(\theta(k_{(2)})\overline{\pi_{\chi}(k_{(3)})}) & (\text{By\ }\eqref{eq-3st2uwindthete})\\
=&  g\overline{S_{\chi^{-1}}}\overline{S_{\chi}}\rightharpoonup (\theta\rightharpoonup  \pi_{\varphi\ast\chi}(k)).
\end{align*}
Moreover, a direct computation shows that
\begin{equation}\label{eq-uselemore27}
\sum_{(f)}f_{(2)}\overline{S_{\chi^{-1}}}\rightharpoonup (f_{(1)}\overline{S_{\chi^{-1}}}\overline{S_{\chi}}\rightharpoonup  \pi_{\varphi\ast\chi}(h))=\pi_{\varphi\ast\chi}(h)\varepsilon_{(\overline{H_{\chi}})^{*}}(f).
\end{equation}
Combining with \eqref{eq-profnilussetrid1} and \eqref{eq-uselemore27}, we obtain that
\begin{equation}\label{eq-profnilussetrid2}
\text{tr}_{H_{\varphi}}((f\rightharpoonup \pi_{\varphi\ast\chi}(h))_{l})=\varepsilon_{(\overline{H_{\chi}})^{*}}(f)\text{tr}_{H_{\varphi\ast\chi}}((\theta\rightharpoonup -)\circ (\pi_{\varphi\ast\chi}(h))_{l}).
\end{equation}
By \eqref{eq-desbyjac-fibalgs1}, $\Delta_{\varphi\ast\chi}(J_{\varphi\ast \chi})\subseteq J_{\varphi\ast\chi}\otimes H_{\ovep}+H_{\varphi\ast\chi}\otimes J_{\ovep}$. Hence, it follows from $\theta\in (\overline{H_{\ovep}})^{*}$ that
$$((\theta\rightharpoonup -)\circ (J_{\varphi\ast\chi})_{l})(H_{\varphi\ast\chi})\subseteq J_{\varphi\ast\chi}.$$
By induction, one has $((\theta\rightharpoonup -)\circ (J_{\varphi\ast\chi})_{l})^{n}(H_{\varphi\ast\chi})\subseteq J_{\varphi\ast\chi}^{n}$ for all positive integer $n$. This shows that $(\theta\rightharpoonup -)\circ (J_{\varphi\ast\chi})_{l}\in \text{End}_{\mathbbm{k}}H_{\varphi\ast\chi}$ is nilpotent. Now it follows from \eqref{eq-profnilussetrid2} that $\text{tr}_{H_{\varphi}}((f\rightharpoonup x)_{l})=0$ for all $x\in J_{\varphi\ast\chi}$.

This completes the proof of \eqref{eq-clDechiu1}. Hence $\Delta_{\varphi,\chi}(J_{\varphi\ast\chi})\subseteq J_{\varphi}\otimes H_{\chi}+H_{\varphi}\otimes J_{\chi}$, provided $\dim_{\mathbbm{k}}\overline{H_{\chi}}=\dim_{\mathbbm{k}}\overline{H_{\ovep}}$. The case $\dim_{\mathbbm{k}}\overline{H_{\varphi}}=\dim_{\mathbbm{k}}\overline{H_{\ovep}}$ follows by a symmetric argument.

(b) For any irreducible $H_{\varphi}$-module $V$ and any irreducible $H_{\chi}$-module $W$, by (a), one has
$$J_{\varphi\ast\chi}(V\otimes W)=0.$$
It follows that $V\otimes W$ is a completely reducible $H_{\varphi\ast\chi}$-module.
\end{proof}

\subsection{Tensor-reducible representations}\label{sub-tensred}
Let $H$ be a Hopf algebra over $\mathbbm{k}$. An $H$-module $V$ is said to be \textit{left tensor-reducible}, if $V\otimes W$ is completely reducible for all irreducible $H$-modules $W$. Similarly, one can define the notion of \textit{a right tensor-reducible module}. An $H$-module is said to be \textit{tensor-reducible}, if it is both left tensor-reducible and right tensor-reducible. Since the trivial module $_{H}\mathbbm{k}$ satisfies that $W \cong W\otimes \mathbbm{k}\cong \mathbbm{k}\otimes W$ for all $H$-modules $W$, it is clear that any left or right tensor-reducible module is completely reducible.  It is obvious that if both $V$ and $W$ are tensor-reducible, $V\otimes W$ are also tensor-reducible. Note that an $H$-module is tensor-reducible if and only if it is a direct sum of some tensor-reducible irreducible $H$-modules. Hence, studying tensor-reducible representations of $H$ reduces to studying the tensor-reducible irreducible representations of $H$.

Let $(H,C)$ be a pair of Hopf algebras satisfying the hypothesis (FinHopf). Then all irreducible $H$-modules are finite-dimensional over $\mathbbm{k}$. By \cite[Theorem 3.1(b)]{mi2025lowest}, all $1$-dimensional $H$-modules are tensor-reducible. It is clear that the following are equivalent.
\begin{itemize}
\item[(a)] $H$ has the Chevalley property;
\item[(b)] All irreducible $H$-modules of $H$ are left tensor-reducible;
\item[(c)] All irreducible $H$-modules of $H$ are right tensor-reducible;
\item[(d)] All irreducible $H$-modules of $H$ are tensor-reducible.
\end{itemize}
Thus, the study of tensor-reducible irreducible representations of $H$ is closely related to the investigation of the Chevalley property of $H$, and the greater the density of tensor-reducible members among the irreducible representations of $H$, the closer $H$ is to possessing the Chevalley property.

\begin{example}\label{eg-grpalg-extenc}
Let $G$ be a finitely generated group with a central subgroup $N$ of index $m$. Clearly, $G$ is a central extension of the finite group $G/N$ by a finitely generated abelian group $N$. Moreover, $\mathbbm{k}G $ is a free $\mathbbm{k}N$-module of rank $m$. By Example \ref{eg-reutr1}, $(\mathbbm{k}G,\mathbbm{k}N,\text{tr}_{\text{reg}})$ is an affine Cayley-Hamilton Hopf algebra of degree $m$ and the identity fiber algebra is the group algebra $\mathbbm{k}[G/N]$. It is well-known that $\mathbbm{k}G$ has the Chevalley property, see for example \cite[Theorem 4.12.1]{MR3242743}. In this case, all irreducible $\mathbbm{k}G$-modules are tensor-reducible.
\end{example}
The following result shows that, under the assumption that the identity fiber algebra $H/\mathfrak{m}_{\ovep}H$ has the Chevalley property, if a fiber algebra $H/\mathfrak{m}H$ of $H$ admits an irreducible representation that is tensor-reducible as an $H$-module, then all irreducible $H/\mathfrak{m}H$-modules are tensor-reducible as $H$-modules.
\begin{proposition}
Let $(H,C)$ be a pair of Hopf algebras satisfying the hypothesis (FinHopf) such that the identity fiber algebra $H/\mathfrak{m}_{\ovep}H$ has the Chevalley property, and let $\mathfrak{m}\in \operatorname{maxSpec}C$. Then the following are equivalent.\label{prop-tensred-fibalg}
\begin{itemize}
\item[(a)] There exists an irreducible $H/\mathfrak{m}H$-module that is tensor-reducible as an $H$-module.
\item[(b)] All irreducible $H/\mathfrak{m}H$-modules are tensor-reducible as $H$-modules.
\end{itemize}
\end{proposition}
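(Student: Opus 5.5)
The implication (b)$\Rightarrow$(a) is immediate, since the finite-dimensional algebra $H/\mathfrak{m}H$ has at least one irreducible module. For (a)$\Rightarrow$(b), fix an irreducible $H/\mathfrak{m}H$-module $V$ that is tensor-reducible as an $H$-module, and write $\mathfrak{m}=\mathfrak{m}_{\chi}$. The plan is to prove that
$$\dim_{\mathbbm{k}}\overline{H_{\chi}}=\dim_{\mathbbm{k}}\overline{H_{\ovep}}.$$
Granting this, Proposition \ref{thm-DelJinone-bardimeq1} (b), applied once with the pair $(\chi,\psi)$ and once with the pair $(\psi,\chi)$ for arbitrary $\psi\in G(C^{\circ})$, shows that $S\otimes W$ and $W\otimes S$ are completely reducible for every irreducible $H_{\chi}$-module $S$ and every irreducible $H$-module $W$ (each such $W$ lives over some fiber $H_{\psi}$). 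That is exactly statement (b).

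\textbf{Step 1: every irreducible $H_{\chi}$-module is tensor-reducible against $H_{\ovep}$.} By \cite[Proposition 2.8]{HMQWChev2025}, $H_{\chi}\text{-mod}$ is an indecomposable exact module category over $H_{\ovep}\text{-mod}$. Hence every irreducible $H_{\chi}$-module $S$ is a composition factor of $V\otimes X$ for some finite-dimensional $H_{\ovep}$-module $X$. Filtering $X$ by its composition factors $X_i$, the module $V\otimes X$ acquires a filtration whose layers $V\otimes X_i$ are semisimple, because $V$ is tensor-reducible. So $S$ is a direct summand of some $V\otimes X_i$ with $X_i$ irreducible.

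Now let $Y$ be any irreducible $H_{\ovep}$-module. Then $S\otimes Y$ is a summand of $V\otimes(X_i\otimes Y)$. The module $X_i\otimes Y$ is semisimple by the Chevalley property of $H_{\ovep}$, so $V\otimes(X_i\otimes Y)$ is semisimple by the tensor-reducibility of $V$, and hence $S\otimes Y$ is semisimple. Arguing symmetrically on the other side gives the same for $Y\otimes S$. (This step only re-derives \eqref{eq-desbyjac-fibalgs1}, but the summand description of the irreducibles is what Step 2 will use.)

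\textbf{Step 2 (the main obstacle): the dimension equality.} Dimension counting through the Galois maps \eqref{eq-galoimap-barn7} only gives the inequality $\dim\overline{H_{\chi}}\geq\dim\overline{H_{\ovep}}$, so the reverse inequality must come from the tensor-reducibility of $V$. My first attempt is a Frobenius–Perron argument.
\begin{itemize}
\item By Step 1, the semisimple category $\overline{H_{\chi}}\text{-mod}$ is an indecomposable module category over the fusion category $\overline{H_{\ovep}}\text{-mod}$, in which tensoring with $V$ is exact and preserves semisimplicity.
\item Consider the object $V^{*}\otimes V$ of $H_{\ovep}\text{-mod}$. It is semisimple, contains the trivial module, and by adjunction its multiplicities are given by $\dim\operatorname{Hom}(X\otimes V,V)$.
\item Use the identity $\sum_{S}(\dim S)^{2}=\dim\overline{H_{\chi}}$ together with the regular-object decomposition of \cite[Theorem 4.1]{HMQWChev2025}, which identifies $\ell-1=\operatorname{FPdim}\operatorname{Gr}(H_{\ovep})=\dim\overline{H_{\ovep}}$.
\item From these, aim to show $\sum_{S\in\operatorname{Irr}(H_{\chi})}(\dim S)^{2}=\operatorname{FPdim}\operatorname{Gr}(H_{\ovep})$.
\end{itemize}
The inequality $\geq$ comes from the FP eigenvector, and equality should be forced because all the tensor products $V\otimes X$ are semisimple, so no radical layers contribute.

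If that route stalls, the fallback is to work directly with $\overline{\beta}_{r}$. The aim would be to show it is injective by exhibiting, from the semisimple $H_{\chi}\otimes H_{\chi}$-action on the modules $S\otimes S'$, that $J_{\chi}\otimes H_{\chi}+H_{\chi}\otimes J_{\chi}$ is exactly the preimage of $J_{\chi}\otimes H_{\ovep}+H_{\chi}\otimes J_{\ovep}$ under the Galois map $\beta_{r}$.
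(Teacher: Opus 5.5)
Your reduction rests entirely on Step 2, and Step 2 is not proved. The Frobenius--Perron bullets list ingredients and then assert that ``equality should be forced because all the tensor products $V\otimes X$ are semisimple.'' That restates what has to be shown; it does not supply a mechanism. The fallback via $\overline{\beta}_{r}$ is likewise only an aim. As written, (a)$\Rightarrow$(b) is therefore not established. The claim itself is true, since it says $\operatorname{Sd}(\mathfrak{m})=\operatorname{Sd}(\mathfrak{m}_{\ovep})$. It can be obtained by the multiplicity count the paper uses for (ii)$\Rightarrow$(iv) in Theorem~\ref{thm-tensred-insec2}. First, $\overline{H_{\chi^{-1}}}\otimes V$ is semisimple. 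Then, for each irreducible $H_{\ovep}$-module $M$, adjunction gives $[\overline{H_{\chi^{-1}}}\otimes V:M]=\dim_{\mathbbm{k}}\operatorname{Hom}_{H}(\overline{H_{\chi^{-1}}},M\otimes V^{*})=(\dim_{\mathbbm{k}}M)(\dim_{\mathbbm{k}}V)$. Hence $\overline{H_{\chi^{-1}}}\otimes V\cong\overline{H_{\ovep}}^{\oplus\dim V}$, and the antipode gives $\dim\overline{H_{\chi}}=\dim\overline{H_{\chi^{-1}}}$. Note, however, that this count needs $M\otimes V^{*}$ to be semisimple, which is exactly the fact your argument is missing.

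That fact is the following: every irreducible $H_{\ovep}$-module $X$ is tensor-reducible as an $H$-module, against irreducibles of \emph{every} fiber and not only of $H_{\ovep}$. This is \eqref{eq-desbyjac-fibalgs1} applied with an arbitrary character $\psi$: it says $J_{\psi}$ kills both $U\otimes X$ and $X\otimes U$ for any irreducible $H_{\psi}$-module $U$. Equivalently, it is Proposition~\ref{thm-DelJinone-bardimeq1}(b) with one of the two characters equal to $\ovep$, where the dimension hypothesis holds trivially. In Step 1 you only used this with the fiber of $S$, testing against $Y$ from $H_{\ovep}$, which is why the step looked like a mere re-derivation.

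With the general statement, your own Step 1 finishes the proof and no dimension count is needed. Each irreducible $H_{\chi}$-module $S$ is a direct summand of $V\otimes X_i$ with $X_i$ irreducible over $H_{\ovep}$. A tensor product of two tensor-reducible modules is tensor-reducible: for irreducible $U$, the modules $V\otimes X_i\otimes U$ and $U\otimes V\otimes X_i$ are semisimple after first decomposing $X_i\otimes U$, respectively $U\otimes V$. A direct summand of a tensor-reducible module is tensor-reducible, so $S$ is tensor-reducible. This is precisely the paper's argument; it realizes each irreducible as a quotient, hence a summand, of $M'\otimes V$ via \cite[Theorem 3.1 (a)]{mi2025lowest}.
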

\begin{proof}
It suffices to prove that (a) implies (b).
Assume that there is an irreducible $H/\mathfrak{m}H$-module $V$ that is tensor-reducible as an $H$-module.
Then for any irreducible $H/\mathfrak{m}H$-module $W$, by \cite[Theorem 3.1 (a)]{mi2025lowest}, there is an irreducible $H/\mathfrak{m}_{\ovep}H$-module $M^{\prime}$ such that $W$ is a quotient of $M^{\prime}\otimes V$.
By Proposition \ref{thm-DelJinone-bardimeq1} (b), any irreducible $H/\mathfrak{m}_{\ovep}H$-module is tensor-reducible. In particular, $M^{\prime}$ is tensor-reducible.
It is clear that the tensor product of two tensor-reducible modules, $M^{\prime}\otimes V$, is itself tensor-reducible.
As $M^{\prime}\otimes V$ is completely reducible, $W$ is necessarily a direct summand of this module.
We therefore conclude that $W$ is tensor-reducible as an $H$-module.
\end{proof}


\subsection{Proof of Theorem \ref{mainthm-tensred4}}\label{subsec-pfofthm1}
For an algebra with trace $(A,C,\text{tr})$ such that $A$ is a finitely generated free $C$-module, the discriminant \cite{MR1972204} of $A$ over $C$ with respect to the trace map $\text{tr}$ is defined by
$$
\Delta(A/C;\text{tr})\coloneqq \text{det}(\text{tr}(x_{i}x_{j}))_{i,j=1}^{n}\in C,
$$
where $\{x_{1},...,x_{n}\}$ is a $C$-basis of $A$. The discriminant $\Delta(A/C;\text{tr})$ is unique up to multiplication by a unit in $C$. For example, the discriminant of an algebraic number field $L$ is precisely the discriminant of its ring of integers over $\mathbb{Z}$ with respect to the regular trace (Example \ref{eg-reutr1}). As the free condition of $A$ over $C$ is not always satisfied, discriminant ideals \cite{MR1972204} and their modified versions \cite{MR3415697} are used instead.

\begin{definition}
Let $(A,C,\text{tr})$ be an algebra with trace. For any positive integer $k$,\label{def-discriid}
\begin{itemize}
\item[(1)]the \textit{$k$-discriminant ideal} $D_{k}(A/C;\text{tr})$ is the ideal of $C$ generated by
$$\{\text{det}(\text{tr}(a_{i}a_{j}))_{i,j=1}^{k}\mid (a_{1},...,a_{k})\in A^{k}\}.$$
\item[(2)]the \textit{modified $k$-discriminant ideal} $MD_{k}(A/C;\text{tr})$ is the ideal of $C$ generated by
$$\{\text{det}(\text{tr}(a_{i}b_{j}))_{i,j=1}^{k}\mid (a_{1},...,a_{k}),(b_{1},...,b_{k})\in A^{k}\}.$$
\end{itemize}
\end{definition}

Let $(A,C,\text{tr})$ be an affine Cayley-Hamilton algebra. Then $A$ is a module-finite $C$-algebra and $C$ is an affine $\mathbbm{k}$-algebra by \cite[Theorem 4.5 (a)]{MR1288995}. It is convenient to work with the \textit{square dimension function} \cite{mi2025lowest}:
\begin{equation}\label{eq-Sd-function}
\text{Sd}:\operatorname{maxSpec}C\to \mathbb{N}, \mathfrak{m}\mapsto \sum_{[V]\in \text{Irr}(A/\mathfrak{m}A)}(\dim_{\mathbbm{k}}V)^{2}.
\end{equation}
By \text{\cite[Theorem 4.1 (b)]{MR3886192}}, for any positive integer $k$, one has
\begin{equation}\label{eq-thm-BY-disrep}
\begin{aligned}
\mathcal{V}_{k}\coloneqq \mathcal{V}(D_{k}(A/C;\text{tr}))=&\mathcal{V}(MD_{k}(A/C;\text{tr}))\\
=& \{\mathfrak{m}\in \operatorname{maxSpec}C\mid\text{Sd}(\mathfrak{m})<k \}.
\end{aligned}
\end{equation}

Let us now restrict to the Hopf context. For an affine Cayley-Hamilton Hopf algebra $(H,C,\text{tr})$, by Cartier’s theorem, $C$ is reduced and $\operatorname{maxSpec}C$ is an affine algebraic group. Moreover, for evey positive integer k, the $k$-discriminant subvariety $\mathcal{V}_{k}$ is a closed subvariety of the algebraic group $\operatorname{maxSpec}C$.  In \cite[Theorem 4.1]{HMQWChev2025}, the authors proved that if the identity fiber algebra $H/\mathfrak{m}_{\ovep}H$ has the Chevalley property, then the lowest discriminant ideal $D_{\ell}(H/C;\text{tr})$ of $(H,C,\text{tr})$ is of level
\begin{equation}\label{eq-CHHopf-lowsd}
\ell\coloneqq \text{Sd}(\mathfrak{m}_{\ovep})+1=\text{FPdim}(\text{Gr}(H/\mathfrak{m}_{\ovep}H))+1,
\end{equation}
where $\text{FPdim}(\text{Gr}(H/\mathfrak{m}_{\ovep}H))$ is the Frobenius-Perron dimension of the Grothendieck ring $\text{Gr}(H/\mathfrak{m}_{\ovep}H)$. The following lemma follows immediately from \eqref{eq-thm-BY-disrep} and \eqref{eq-CHHopf-lowsd}.
\begin{lemma}
Let $(H,C,\text{tr})$ be an affine Cayley-Hamilton Hopf algebra such that the identity fiber algebra $H/\mathfrak{m}_{\ovep}H$ has the Chevalley property, let $\ell$ be the positive integer given in \eqref{eq-CHHopf-lowsd}. Then, for any $\mathfrak{m}\in \operatorname{maxSpec}C$, $\mathfrak{m} \supseteq D_{\ell}(H/C;\text{tr}) $ if and only if $\text{Sd}(\mathfrak{m})=\text{Sd}(\mathfrak{m}_{\ovep})$.\label{lem-sdeqiffinlows3}
\end{lemma}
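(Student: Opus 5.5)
The plan is to read off the membership condition from the Brown--Yakimov description \eqref{eq-thm-BY-disrep}, using the value of $\ell$ in \eqref{eq-CHHopf-lowsd}, and then supply a lower bound on $\text{Sd}$.

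First I translate the condition. By definition of the zero set, $\mathfrak{m}\supseteq D_{\ell}(H/C;\text{tr})$ means exactly $\mathfrak{m}\in\mathcal{V}(D_{\ell}(H/C;\text{tr}))=\mathcal{V}_{\ell}$. Applying \eqref{eq-thm-BY-disrep} with $k=\ell$ gives that this holds if and only if $\text{Sd}(\mathfrak{m})<\ell$. Since $\ell=\text{Sd}(\mathfrak{m}_{\ovep})+1$ by \eqref{eq-CHHopf-lowsd}, and $\text{Sd}$ takes integer values, the condition becomes $\text{Sd}(\mathfrak{m})\leq \text{Sd}(\mathfrak{m}_{\ovep})$.

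It then suffices to show $\text{Sd}(\mathfrak{m})\geq \text{Sd}(\mathfrak{m}_{\ovep})$ for every $\mathfrak{m}\in\operatorname{maxSpec}C$. For this I use that $\ell$ is the \emph{lowest} level, as established in \cite[Theorem 4.1]{HMQWChev2025}. Minimality means $\mathcal{V}_{\ell-1}=\varnothing$. By \eqref{eq-thm-BY-disrep} with $k=\ell-1=\text{Sd}(\mathfrak{m}_{\ovep})$, this says no maximal ideal satisfies $\text{Sd}(\mathfrak{m})<\text{Sd}(\mathfrak{m}_{\ovep})$, which is the required bound. Combining the two inequalities gives $\mathfrak{m}\supseteq D_{\ell}(H/C;\text{tr})$ if and only if $\text{Sd}(\mathfrak{m})=\text{Sd}(\mathfrak{m}_{\ovep})$.

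The only nontrivial input is this minimality, namely that $\text{Sd}$ attains its minimum at the identity $\mathfrak{m}_{\ovep}$. It is contained in the cited \cite[Theorem 4.1]{HMQWChev2025}. If one wanted to avoid relying on minimality, it could alternatively be derived from the fact that $H/\mathfrak{m}H\text{-mod}$ is an exact module category over the Chevalley tensor category $H/\mathfrak{m}_{\ovep}H\text{-mod}$, via Frobenius--Perron dimensions. Apart from this point, the argument is immediate.
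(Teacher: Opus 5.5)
Your proposal is correct and follows the paper's route. The paper says the lemma follows immediately from the Brown--Yakimov description \eqref{eq-thm-BY-disrep} together with \eqref{eq-CHHopf-lowsd}, with the minimality of $\ell=\text{Sd}(\mathfrak{m}_{\ovep})+1$ imported from \cite[Theorem 4.1]{HMQWChev2025}, exactly as you do.
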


Let $G_{0}\coloneqq G((H/\mathfrak{m}_{\ovep}H)^{\circ})$ be the group of characters of the identity fiber algebra $H/\mathfrak{m}_{\ovep}H$. Then for any $\mathfrak{m}\in \operatorname{maxSpec}C$, the set $\text{Irr}(H/\mathfrak{m}H)$ admits a natural $G_{0}$-action via \eqref{eq-bifunct-conv-fiberalgsten}:
$$G_{0}\times \text{Irr}(H/\mathfrak{m}H)\to \text{Irr}(H/\mathfrak{m}H),  \  (\chi, [V])\mapsto [\chi\otimes V].$$
For any $[V]\in \text{Irr}(H/\mathfrak{m}H)$, let $\text{Stab}_{G_{0}}[V]$ be the stabilizer of $[V]$ under the above $G_{0}$-action. It always holds that $\text{Stab}_{G_{0}}[V]\leq (\dim_{\mathbbm{k}}V)^{2}$ \cite[Proposition 3.5 (b)]{mi2025lowest}. When the equality holds, $V$ is said to be maximally stable \cite[Definition 3.8]{mi2025lowest}.
\begin{theorem}
Let $(H,C,\text{tr})$ be an affine Cayley-Hamilton Hopf algebra such that the identity fiber algebra $H/\mathfrak{m}_{\ovep}H$ has the Chevalley property.  Let $\ell$ be the positive integer given in \eqref{eq-CHHopf-lowsd}. Then
\begin{itemize}
\item[(a)] for any irreducible $H$-module $V$, the following are equivalent.\label{thm-tensred-insec2}
\begin{itemize}
\item[(i)] $V$ is tensor-reducible;
\item[(ii)] $V$ is left tensor-reducible;
\item[(iii)] $V$ is right tensor-reducible;
\item[(iv)] $V$ is annihilated by the lowest discriminant ideal  $D_{\ell}(H/C;\text{tr})$;
\item[(v)] $V$ is annihilated by the lowest modified discriminant ideal $MD_{\ell}(H/C;\text{tr})$;
\item[(vi)] $V\otimes V^{*}$ is completely reducible.
\end{itemize}
\item[(b)] all maximally stable irreducible $H$-modules are tensor-reducible. The converse holds provided that $H/\mathfrak{m}_{\ovep}H$ is a basic algebra.
\end{itemize}
\end{theorem}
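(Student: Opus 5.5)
The plan is to route every condition in (a) through the numerical criterion of Lemma \ref{lem-sdeqiffinlows3}. Let $V$ be an irreducible $H$-module, and let $\mathfrak{m}=\mathfrak{m}_{\chi}=(\text{Ann}_{H}V)\cap C$, so that $V$ is an irreducible $H_{\chi}$-module.

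\emph{Step 1: (iv) $\Leftrightarrow$ (v), and translation into a dimension condition.} Since $V$ is annihilated by $\mathfrak{m}_{\chi}$ and $C/\mathfrak{m}_{\chi}=\mathbbm{k}$, an ideal $I$ of $C$ annihilates $V$ if and only if $I\subseteq\mathfrak{m}_{\chi}$. Hence (iv) and (v) both say that $\mathfrak{m}_{\chi}\in\mathcal{V}_{\ell}$, and they are equivalent by \eqref{eq-thm-BY-disrep}. By Lemma \ref{lem-sdeqiffinlows3}, this holds if and only if $\text{Sd}(\mathfrak{m}_{\chi})=\text{Sd}(\mathfrak{m}_{\ovep})$. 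By Wedderburn's theorem $\text{Sd}(\mathfrak{m}_{\chi})=\dim_{\mathbbm{k}}\overline{H_{\chi}}$, so (iv) is equivalent to
\[
\dim_{\mathbbm{k}}\overline{H_{\chi}}=\dim_{\mathbbm{k}}\overline{H_{\ovep}}.
\]

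\emph{Step 2: the easy implications.} Assume (iv). Let $W$ be any irreducible $H$-module, say an $H_{\psi}$-module. Proposition \ref{thm-DelJinone-bardimeq1} (b), applied with the pair $(\chi,\psi)$ and with the pair $(\psi,\chi)$, shows that both $V\otimes W$ and $W\otimes V$ are completely reducible. This gives (i). The implications (i) $\Rightarrow$ (ii), (iii) are trivial, and (ii) $\Rightarrow$ (vi) follows by taking $W=V^{*}$. For (iii), taking $W=V^{*}$ gives that $V^{*}\otimes V$ is completely reducible. I will run the argument of Step 3 in its mirror form, using $V^{*}\otimes V$ (equivalently the right dual $^{*}V$) in place of $V\otimes V^{*}$; all the ingredients used there have left-right symmetric versions.

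\emph{Step 3: (vi) $\Rightarrow$ (iv), the main step.} Suppose $V\otimes V^{*}$ is completely reducible. It is an $H_{\chi\ast\chi^{-1}}=H_{\ovep}$-module, hence a semisimple $\overline{H_{\ovep}}$-module.
\begin{enumerate}
\item[(1)] I first want to show that $M\otimes V$ is completely reducible for every irreducible $H_{\ovep}$-module $M$. The coevaluation map makes $M\otimes V$ a submodule of $M\otimes V\otimes V^{*}\otimes V$. Here $M\otimes(V\otimes V^{*})$ is a direct sum of irreducible $H_{\ovep}$-modules $N$, because $H_{\ovep}$ has the Chevalley property. This reduces the question to the modules $N\otimes V$, which is circular as it stands.
\item[(2)] To break the circle, I would instead use the adjunction
\[
\text{Hom}_{H}(M\otimes V,\,V')\cong\text{Hom}_{H}(M,\,V'\otimes V^{*}).
\]
I would then compare the radical of $M\otimes V$ with the socle of $V\otimes V^{*}$, trying to show that $\text{rad}(M\otimes V)=0$ whenever $V\otimes V^{*}$ is semisimple.
\item[(3)] Granting (1), Theorem 3.1(a) of \cite{mi2025lowest} says that every irreducible $H_{\chi}$-module $W$ is a quotient of some $M\otimes V$. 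By semisimplicity, $W$ is then a summand of $\overline{H_{\ovep}}\otimes V$. Counting multiplicities, each $W$ appears with multiplicity at least $\dim W$, which gives
\[
\text{Sd}(\mathfrak{m}_{\chi})\cdot\dim V\;\le\;\text{Sd}(\mathfrak{m}_{\ovep})\cdot\dim V,
\]
that is, $\text{Sd}(\mathfrak{m}_{\chi})\le\text{Sd}(\mathfrak{m}_{\ovep})$.
\item[(4)] The reverse inequality $\text{Sd}(\mathfrak{m}_{\chi})\ge\text{Sd}(\mathfrak{m}_{\ovep})$ is the minimality of $\text{Sd}(\mathfrak{m}_{\ovep})$ behind \eqref{eq-CHHopf-lowsd}. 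Together with (3), this gives equality and hence (iv).
\end{enumerate}
I expect sub-step (1) to be the main obstacle. As a fallback, I would use the Frobenius--Perron dimension function on the indecomposable exact module category $H_{\chi}\text{-mod}$, where $V\otimes V^{*}$ plays the role of the internal End of $V$. Its semisimplicity should force the FP-dimension of $\overline{H_{\chi}}$ to match that of $\overline{H_{\ovep}}$, following the approach of \cite{HMQWChev2025}.

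\emph{Step 4: part (b).} Suppose $V$ is maximally stable, that is, $|\text{Stab}_{G_{0}}[V]|=(\dim V)^{2}$. The analysis in \cite{mi2025lowest} of the stable summands of $V\otimes V^{*}$ shows that $V\otimes V^{*}$ is a direct sum of the one-dimensional modules indexed by the stabilizer. Hence (vi) holds, and $V$ is tensor-reducible by part (a). Conversely, suppose $H_{\ovep}$ is basic and $V$ is tensor-reducible. Then $\text{Sd}(\mathfrak{m}_{\chi})=\text{Sd}(\mathfrak{m}_{\ovep})=|G_{0}|$. By \cite[Proposition 3.5]{mi2025lowest}, $G_{0}$ acts transitively on $\text{Irr}(H_{\chi})$ and all members of $\text{Irr}(H_{\chi})$ have the same dimension, so orbit counting gives $|\text{Stab}_{G_{0}}[V]|=|G_{0}|/|\text{Irr}(H_{\chi})|=(\dim V)^{2}$. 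Thus $V$ is maximally stable.
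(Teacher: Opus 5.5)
Your proposal has a genuine gap at Step 3, and the gap breaks the whole plan. Every route to (iv) that you propose passes through it: (ii)$\Rightarrow$(vi)$\Rightarrow$(iv), and the mirror route from (iii). Both halves of (b) also depend on it. You leave sub-steps (1)--(2) open yourself, but the deeper problem is that the reduction ``(1), hence (iv) by (3)'' is false. Suppose $\overline{H_{\ovep}}\otimes V$ is semisimple and each $W\in\text{Irr}(H_{\chi})$ occurs in it with multiplicity at least $\dim W$. Counting dimensions then gives only $\sum_{W}(\dim W)^{2}\le\dim(\overline{H_{\ovep}}\otimes V)$, that is, $\text{Sd}(\mathfrak{m}_{\chi})\le\text{Sd}(\mathfrak{m}_{\ovep})\cdot\dim V$. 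Nothing justifies the factor $\dim V$ you put on the left. The step also cannot be repaired. When $H_{\ovep}$ is basic, (1) holds for \emph{every} irreducible $V$, since each $M$ is one-dimensional and so $M\otimes V$ is irreducible. Yet in Example~\ref{eg-geLiualg} the fibers with $\alpha^{w}\neq1$ are $\text{M}_{n}(\mathbbm{k})$, with $\text{Sd}=n^{2}>n=\text{Sd}(\mathfrak{m}_{\ovep})$. So tensoring $V$ with identity-fiber modules cannot detect (iv). The FP-dimension fallback points in a direction but is not an argument.

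The missing idea is to tensor $V$ with the semisimple quotient of the \emph{inverse} fiber. The product then lies in $H_{\ovep}\text{-mod}$, whose irreducibles $M_{j}$ are already tensor-reducible by the (iv)$\Rightarrow$(i) you proved, since $\mathfrak{m}_{\ovep}\in\mathcal{V}_{\ell}$. By adjunction,
$\dim\text{Hom}_{H}(\overline{H_{\chi^{-1}}}\otimes V,M_{j})=\dim\text{Hom}_{H}(\overline{H_{\chi^{-1}}},M_{j}\otimes V^{*})=\dim M_{j}\cdot\dim V$, because $M_{j}\otimes V^{*}$ is a semisimple $H_{\chi^{-1}}$-module. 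Hence the top of $\overline{H_{\chi^{-1}}}\otimes V$ is $\overline{H_{\ovep}}^{\oplus\dim V}$. Comparing dimensions gives $\text{Sd}(\mathfrak{m}_{\chi})=\text{Sd}(\mathfrak{m}_{\chi^{-1}})\ge\text{Sd}(\mathfrak{m}_{\ovep})$, with equality exactly when $\overline{H_{\chi^{-1}}}\otimes V$ is semisimple. This is the paper's argument that one-sided tensor-reducibility implies (iv); the other side is the mirror computation with ${}^{*}V$ and $V\otimes\overline{H_{\chi^{-1}}}$.

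For (vi)$\Rightarrow$(iv), the paper instead cites Theorem~4.1(2) of Huang--Mi--Qi--Wu: a fiber lies in $\mathcal{V}_{\ell}$ iff $L\otimes L^{*}$ is semisimple for all irreducible $L$ in it. It then passes from $V$ to every such $L$ using $L\subseteq W\otimes V$ and $(W\otimes V)\otimes(W\otimes V)^{*}\cong W\otimes(V\otimes V^{*})\otimes W^{*}$. Alternatively, the mirror count works directly. Every irreducible $H_{\chi^{-1}}$-module is $L^{*}$ for some $L\in\text{Irr}(H_{\chi})$, and $V\otimes L^{*}$ is a subquotient of $(V\otimes V^{*})\otimes W^{*}$, which is semisimple.

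Your orbit-counting converse in (b) is fine once (i)$\Rightarrow$(iv) is in hand. The transitivity it needs follows, for example, from Theorem~3.1(a) of Mi--Wu--Yakimov, since $W\cong M\otimes V$ when $\dim M=1$.
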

\begin{proof}
(a) (i) $\Rightarrow$ (ii), (i) $\Rightarrow$ (iii) and (i) $\Rightarrow$ (vi) are trivial.  

(iv)  $\Leftrightarrow$ (v) follows immediately from \eqref{eq-thm-BY-disrep} and the fact that $\text{Ann}_{C}V \in \operatorname{maxSpec} C$.

(iv) $\Rightarrow$ (i) Suppose that $V$ is annihilated by $D_{\ell}(H/C;\text{tr})$. Set $\mathfrak{m}\coloneqq \text{Ann}_{C}V\in \operatorname{maxSpec}C$. Then $V$ can be regarded as an irreducible $H/\mathfrak{m}H$-module. By Lemma \ref{lem-sdeqiffinlows3}, $\text{Sd}(\mathfrak{m})=\text{Sd}(\mathfrak{m}_{\ovep})$. For any irreducible $H$-module $W$, set $\mathfrak{n}\coloneqq \text{Ann}_{C}W\in  \operatorname{maxSpec}C$. By \eqref{eq-bifunct-conv-fiberalgsten}, $V\otimes W$ is an $H/(\mathfrak{m}\ast\mathfrak{n})H$-module, and $W\otimes V$ is an $H/(\mathfrak{n}\ast\mathfrak{m})H$-module. By Proposition \ref{thm-DelJinone-bardimeq1} (b), both $V\otimes W$ and $W\otimes V$ are completely reducible as $H$-modules. Hence $V$ is tensor-reducible.

 (ii) $\Rightarrow$ (iv) By Lemma \ref{lem-sdeqiffinlows3}, it suffices to show that $\mathfrak{m}\coloneqq\text{Ann}_{C}V$ satisfies $\text{Sd}(\mathfrak{m})=\text{Sd}(\mathfrak{m}_{\ovep})$. For each $\mathfrak{t}\in \operatorname{maxSpec}C$, set $\overline{H_{\mathfrak{t}}}\coloneqq (H/\mathfrak{t}H)/\text{Jac}(H/\mathfrak{t}H)$, which is completely reducible as an $H$-module. 

Set $\text{Irr}(H/\mathfrak{m}_{\ovep}H)=\{[M_{1}],...,[M_{d}]\}$. Notice that $D_{\ell}(H/C;\text{tr})M_{j}=0$ for any $1\leq j\leq d$. Hence $M_{j}$ is tensor-reducible for any $1\leq j\leq d$. Consider the adjoint isomorphism
$$\text{Hom}_{H}(\overline{H_{\mathfrak{m}^{-1}}}\otimes V,M_{j})\cong  \text{Hom}_{H}(\overline{H_{\mathfrak{m}^{-1}}} ,M_{j}\otimes V^{*}),$$
where $V^{*}$ is the left dual of $V$. Note that $V^{*}$ is irreducible and $V$ is left tensor-reducible,
\begin{align*}
[\overline{H_{\mathfrak{m}^{-1}}}\otimes V:M_{j}]=&\dim_{\mathbbm{k}}\text{Hom}_{H}(\overline{H_{\mathfrak{m}^{-1}}}\otimes V,M_{j})\\
=&\dim_{\mathbbm{k}}\text{Hom}_{H}(\overline{H_{\mathfrak{m}^{-1}}} ,M_{j}\otimes V^{*})\\
=&\dim_{\mathbbm{k}}\text{Hom}_{H/\mathfrak{m}^{-1}H}(\overline{H_{\mathfrak{m}^{-1}}} ,M_{j}\otimes V^{*})\\
=&(\dim_{\mathbbm{k}}M_{j})(\dim_{\mathbbm{k}}V).
\end{align*}
The last equality follows from the fact that $M_{j}\otimes V^{*}$ is completely reducible. Hence $$\overline{H_{\mathfrak{m}^{-1}}}\otimes V\cong \overline{H_{\mathfrak{m}_{\ovep}}}^{\oplus \dim_{\mathbbm{k}}V}$$ as $H$-modules. It follows that 
$\text{Sd}(\mathfrak{m}^{-1})\dim_{\mathbbm{k}}V=\text{Sd}(\mathfrak{m}_{\ovep})\dim_{\mathbbm{k}}V.$ Hence $\text{Sd}(\mathfrak{m})=\text{Sd}(\mathfrak{m}_{\ovep})$. Therefore (iv) holds.

 (iii) $\Rightarrow$ (iv) It is similar to (ii) $\Rightarrow$ (iv).

 (vi) $\Rightarrow$ (v) Assume that $V\otimes V^{*}$ is completely reducible. Set $\mathfrak{m}\coloneqq \text{Ann}_{C}V$. By \cite[Theorem 4.1 (2)]{HMQWChev2025}, it suffices to show that $L\otimes L^{*}$ is completely reducible for any irreducible $H/\mathfrak{m}H$-module $L$. Let $L$ be an irreducible $H/\mathfrak{m}H$-module. It follows from \cite[Theorem 3.1 (a)]{mi2025lowest} that there exists an irreducible $H/\mathfrak{m}_{\ovep}H$-module $W$ such that $L$ is a submodule of $W\otimes V$. Since $V\otimes V^{*}$ is a direct sum of irreducible $H/\mathfrak{m}_{\ovep}H$-modules, $V\otimes V^{*}$ is tensor-reducible. Hence $W\otimes (V\otimes V^{*})\otimes W^{*}\cong (W\otimes V)\otimes (W\otimes V)^{*}$ is completely reducible. Note that $L\otimes L^{*}$ is a subquotient of $(W\otimes V)\otimes (W\otimes V)^{*}$. It follows that $L\otimes L^{*}$ is completely reducible.
 
Hence, the proof of (a) is completed.

(b) By \cite[Proposition 3.10]{mi2025lowest}, for any maximally stable irreducible $H$-module $V$, $V\otimes V^{*}$ is a direct sum of nonisomorphic $1$-dimensional $H$-module. It follows from (a) that $V$ is tensor-reducible.

Assume further that the identity fiber algebra $H/\mathfrak{m}_{\ovep}H$ is basic. By \cite[Theorem 4.2 (c)]{mi2025lowest},  any irreducible $H$-module annihilated by $D_{\ell}(H/C;\text{tr})$ is maximally stable. It follows from (a) that
any tensor-reducible irreducible $H$-module are maximally stable.
\end{proof}


\begin{remark}
In the setting of Theorem \ref{thm-tensred-insec2}, tensor-reducible irreducible $H$-module may not be maximally stable, even under the assumption that the identity fiber algebra $H/\mathfrak{m}_{\ovep}H$ is semisimple. See \cite[Remark 4.2]{HMQWChev2025}.
\end{remark}

Recall that the left winding automorphism $W_{l}(\chi)$ and right winding automorphism  $W_{r}(\chi)$ associated to $\chi\in G(H^{\circ})$ are defined respectively by:
$$W_{l}(\chi)(h)\coloneqq \sum_{(h)}\chi(h_{(1)})h_{(2)} \text{ and\ } W_{r}(\chi)(h)\coloneqq \sum_{(h)}h_{(1)}\chi(h_{(2)}), \forall h\in H.$$
It is obvious that the central Hopf subalgebra $C$ is invariant under the action of $W_{l}(\chi)$ and $W_{r}(\chi)$. Both $W_{l}(G(H^{\circ}))$ and $W_{r}(G(H^{\circ}))$ are subgroups of the algebra automorphism group of $H$, which are called the left winding automorphism group and the right winding automorphism group of $H$, respectively. Winding automorphisms play an important role in the study of the homological properties of $H$; for relevant material we refer to \cite{MR2437632}.

\begin{theorem}
Let $(H,C,\text{tr})$ be an affine Cayley-Hamilton Hopf algebra such that the identity fiber algebra $H/\mathfrak{m}_{\ovep}H$ has the Chevalley property. The following are equivalent.\label{thm-whenalltriepare1d}
\begin{itemize}
\item[(i)] All tensor-reducible irreducible $H$-modules are $1$-dimensional;
\item[(ii)] The identity fiber algebra $H/\mathfrak{m}_{\ovep}H$ is basic and all maximally stable irreducible $H$-modules are $1$-dimensional;
\item[(iii)] The identity fiber algebra $H/\mathfrak{m}_{\ovep}H$ is basic, and 
$$V_{\ell}=W_{l}(G(H^{\circ}))(\mathfrak{m}_{\ovep})=W_{r}(G(H^{\circ}))(\mathfrak{m}_{\ovep}),$$
where $\ell$ is the positive integer defined in \eqref{eq-CHHopf-lowsd}.
\end{itemize}
\end{theorem}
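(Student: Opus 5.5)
We will prove (i) $\Rightarrow$ (ii) $\Rightarrow$ (iii) $\Rightarrow$ (i), relying on Theorem \ref{thm-tensred-insec2} throughout.

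\emph{(i) $\Rightarrow$ (ii).} By Theorem \ref{thm-tensred-insec2} (a), every irreducible $H/\mathfrak{m}_{\ovep}H$-module is annihilated by $D_{\ell}(H/C;\text{tr})$, since $\mathfrak{m}_{\ovep}\in\mathcal{V}_{\ell}$ by Lemma \ref{lem-sdeqiffinlows3}. Hence each such module is tensor-reducible as an $H$-module. By (i), each is then $1$-dimensional, so $H/\mathfrak{m}_{\ovep}H$ is basic. Maximally stable irreducible modules are tensor-reducible by Theorem \ref{thm-tensred-insec2} (b), so by (i) they are $1$-dimensional.

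\emph{(ii) $\Rightarrow$ (iii).} This is \cite[Theorem 4.3 (c)]{mi2025lowest}, as noted after Theorem \ref{mainthm-tensred4}. If a self-contained check is wanted, argue as follows. Since $H/\mathfrak{m}_{\ovep}H$ is basic, Theorem \ref{thm-tensred-insec2} (b) identifies the tensor-reducible irreducibles with the maximally stable ones, so every irreducible module over a fiber in $\mathcal{V}_{\ell}$ is $1$-dimensional. Each such module is a character $\chi\in G(H^{\circ})$ with $\mathfrak{m}=\operatorname{Ker}\chi\cap C$. For $c\in C$ we have $W_{l}(\chi)(c)=\sum\chi(c_{(1)})c_{(2)}$, so $W_{l}(\chi)^{-1}(\mathfrak{m}_{\ovep})$ (or its image, depending on the direction convention) is the maximal ideal $\mathfrak{m}_{\chi|_{C}}\ast\mathfrak{m}_{\ovep}$-translate, namely $\mathfrak{m}$ itself. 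Conversely, for $\chi\in G(H^{\circ})$ the winding automorphism $W_{l}(\chi)$ is an algebra automorphism of $H$ preserving $C$. It induces an isomorphism between $H/\mathfrak{m}_{\ovep}H$ and the fiber over the translated ideal, so it preserves $\text{Sd}$. Lemma \ref{lem-sdeqiffinlows3} then puts the translate in $\mathcal{V}_{\ell}$. The same argument works for $W_{r}$.

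\emph{(iii) $\Rightarrow$ (i).} Let $V$ be a tensor-reducible irreducible $H$-module with $\mathfrak{m}=\text{Ann}_{C}V$. By Theorem \ref{thm-tensred-insec2} (a), $\mathfrak{m}\in\mathcal{V}_{\ell}$. By (iii), $\mathfrak{m}=W_{l}(\chi)(\mathfrak{m}_{\ovep})$ for some $\chi\in G(H^{\circ})$, and $W_{l}(\chi)$ induces an algebra isomorphism $H/\mathfrak{m}_{\ovep}H\cong H/\mathfrak{m}H$. The latter is therefore basic, so $V$ is $1$-dimensional. Concretely, twisting a $1$-dimensional $H/\mathfrak{m}_{\ovep}H$-module by $\chi$ gives all irreducibles of $H/\mathfrak{m}H$.

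The main obstacle is the bookkeeping in the winding step: checking that $W_{l}(\chi)$ maps $\mathfrak{m}_{\ovep}H$ onto $\mathfrak{m}H$ exactly, with the correct direction and inverse, so that one genuinely gets an algebra isomorphism of fibers. Once this is settled, the rest consists of direct citations of Theorem \ref{thm-tensred-insec2} and Lemma \ref{lem-sdeqiffinlows3}.
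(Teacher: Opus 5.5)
Your proof is correct and follows the paper's route: (i)$\Rightarrow$(ii) via Theorem \ref{thm-tensred-insec2} (a) and (b), (ii)$\Rightarrow$(iii) by \cite[Theorem 4.3 (c)]{mi2025lowest}, and (iii)$\Rightarrow$(i) via basicness of the fibers over $\mathcal{V}_{\ell}$. In the last step you unpack the paper's citation of \cite[Theorem 4.3 (a)]{mi2025lowest} by noting that $W_{l}(\chi)$ induces an algebra isomorphism $H/\mathfrak{m}_{\ovep}H\cong H/\mathfrak{m}H$. Your optional self-contained check of (ii)$\Rightarrow$(iii) is also sound: $\ovep\circ W_{l}(\chi)|_{C}=\chi|_{C}$ gives $W_{l}(\chi)^{-1}(\mathfrak{m}_{\ovep})=\operatorname{Ker}(\chi|_{C})$, and since $W_{l}(G(H^{\circ}))$ is a group, the direction convention you worried about is immaterial.
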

\begin{proof}
(i) $\Rightarrow $ (ii) By \cite[Theorem 4.1]{HMQWChev2025} and Theorem \ref{thm-tensred-insec2} (a), any irreducible $H/\mathfrak{m}_{\ovep}H$-module is tensor-reducible as an $H$-module. In particular, all irreducible $H/\mathfrak{m}_{\ovep}H$-modules are $1$-dimensional, which shows that the identity fiber algebra $H/\mathfrak{m}_{\ovep}H$ is basic. The second conclusion follows from Theorem \ref{thm-tensred-insec2} (b).

(ii) $\Rightarrow$ (iii) This is \cite[Theorem 4.3 (c)]{mi2025lowest}.

(iii) $\Rightarrow$ (i) By \cite[Theorem 4.3 (a)]{mi2025lowest}, for any $\mathfrak{m}\in \mathcal{V}_{\ell}$, $H/\mathfrak{m}H$ is basic. Hence the result follows from Theorem \ref{thm-tensred-insec2} (a). 
\end{proof}


\subsection{Big quantized Borel subalgebras at roots of unity}\label{subsec-Uqb}
Let $\mathfrak{g}$ be a finite-dimensional complex simple Lie algebra of rank $n$ with a Cartan subalgebra $\mathfrak{h}$, let $\Phi$ be the corresponding root system with a fixed base $ \Pi=\{\alpha_{1},...,\alpha_{n}\}$ and let $W$ be the Weyl group of $\Phi$. Let $(a_{ij})_{n\times n}$ denote the corresponding Cartan matrix of $\Phi$. Then there are coprime integers $d_{1},...,d_{n}\in \{1,2,3\}$ such that the matrix $(d_{i}a_{ij})_{n\times n}$ is symmetric. Let $l\geq 3$ be an odd positive integer, coprime to $3$ if $\mathfrak{g}$ is of type $G_{2}$, and let $\epsilon \in \mathbb{C}$ be a primitive $l$-th root of unity. We also assume that  $l$ is good for the root system $\Phi$, that is, if $\sum_{i=1}^{n}a_{i}\alpha_{i}$ is the highest root of $\mathfrak{g}$, then $l$ is prime to all the integers $a_{i}$. 

Throughout this subsection, $\mathbb{C}$ is the base field.


 Let $\mathcal{U}_{\epsilon}(\mathfrak{g})$ be the De Concini-Kac quantuzed enveloping algebra at the root of unity $\epsilon$ \cite{MR1103601}. This is a noncommutative and noncocommutative Hopf algebra with generators $E_{i},F_{i},K_{i}^{\pm 1}, 1\leq i\leq n$. The De Concini-Kac-Procesi quantized Borel subalgebra $\mathcal{U}_{\epsilon}^{\geq 0}(\mathfrak{g})$ \cite{MR1351503,MR1288995} is the subalgebra of $\mathcal{U}_{\epsilon}(\mathfrak{g})$ generated by $E_{i},K_{i}^{\pm 1}, 1\leq i\leq n$. $\mathcal{U}_{\epsilon}^{\geq 0}(\mathfrak{g})$  is a Hopf subalgebra of $\mathcal{U}_{\epsilon}(\mathfrak{g})$ with the coalgebra structure and antipode are given by 
$$\Delta(K_{i})=K_{i}\otimes K_{i},\  \Delta(E_{i})=E_{i}\otimes 1+K_{i}\otimes E_{i},$$
$$\varepsilon(K_{i})=1,\ \varepsilon(E_{i})=0,$$
$$S(K_{i})=K_{i}^{-1},\ S(E_{i})=-K_{i}^{-1} E_{i},$$
for $1\leq i\leq n$.






Let $w_{0}$ be the unique element of longest  length in the Weyl group $W$, and we fix a reduced expression $w=s_{i_{1}}s_{i_{2}}\cdots s_{i_{N}}$, where $s_{k}$ denote the simple reflection corresponding to each simple root $\alpha_{k}$. Define $\beta_{k}=s_{i_{1}}\cdots s_{i_{k-1}}(\alpha_{i_{k}}),k=1,2,...,N$. Then $\{\beta_{1},...,\beta_{N}\}$ is precisely the set of positive roots of $\Phi$ with respect to $\Pi$. Consider the quantum root vectors
$$E_{\beta_{k}}=T_{i_{1}}\cdots T_{i_{k-1}}(E_{i_{k}}),k=1,2,...,N,$$
where $T_{i}$ is the action \cite{MR1066560} of the braid group of $W$ on $\mathcal{U}_{\epsilon}(\mathfrak{g})$. 

By \cite[Proposition 1.7(c)]{MR1103601} or \cite[Theorem. III. 6.1]{MR1898492}, 
\begin{equation}\label{eq-Uqb-PBW}
\{K_{1}^{t_{1}}\cdots K_{n}^{t_{n}}E_{\beta_{1}}^{m_{1}}\cdots E_{\beta_{N}}^{m_{N}}\mid t_{1},...,t_{n}\in \mathbb{Z},m_{1},...,m_{N}\in \mathbb{N}\}
\end{equation}
 is a $\mathbb{C}$-basis of $\mathcal{U}_{\epsilon}^{\geq 0}(\mathfrak{g})$. 
 
 Consider the De Concini-Kac-Procesi \cite{MR1124981,MR1351503} central subalgebra
$$\mathcal{C}_{\epsilon}^{\geq 0}(\mathfrak{g})\coloneqq \mathbb{C}[K_{1}^{\pm l},...,K_{n}^{\pm l},E_{\beta_{1}}^{l},...,E_{\beta_{N}}^{l}]\subseteq \mathcal{U}_{\epsilon}^{\geq 0}(\mathfrak{g}) .$$
This is a central Hopf subalgebra of $\mathcal{U}_{\epsilon}^{\geq 0}(\mathfrak{g})$ by \cite[Proposition 5.6(d)]{MR1124981}. It is clear from \eqref{eq-Uqb-PBW} that $\mathcal{U}_{\epsilon}^{\geq 0}(\mathfrak{g})$ is a free $\mathcal{C}_{\epsilon}^{\geq 0}(\mathfrak{g})$-module of rank $l^{n+N}$. Hence, $(\mathcal{U}_{\epsilon}^{\geq 0}(\mathfrak{g}),\mathcal{C}_{\epsilon}^{\geq 0}(\mathfrak{g}),\text{tr}_{\text{reg}})$ is an affine Cayley-Hamilton Hopf algebra of degree $l^{n+N}$. By \cite[Lemma 5.3 (b)]{mi2025lowest} or \cite[Theorem 2.3 (a)]{MR1863398}, the identity fiber algebra $\mathcal{U}_{\epsilon}^{\geq 0}(\mathfrak{g})/\mathfrak{m}_{\ovep}\mathcal{U}_{\epsilon}^{\geq 0}(\mathfrak{g})\cong \mathfrak{u}_{\epsilon}^{\geq 0}(\mathfrak{g})$ is basic. In particular, it has the Chevalley property. This observation enables us to use Theorem \ref{thm-whenalltriepare1d} to show that the tensor-reducible irreducible representations of $\mathcal{U}_{\epsilon}^{\geq 0}(\mathfrak{g})$ are precisely the $1$-dimensional ones.

\begin{proposition}\label{prop-quaborel-rofutenr}
Let $\mathfrak{g}$ be a finite-dimensional complex simple Lie algebra of rank $n$, let $l\geq 3$ be an odd positive integer, coprime to $3$ if $\mathfrak{g}$ is of type $G_{2}$, and let $\epsilon\in \mathbb{C}$ be a primitive $l$-th root of unity. For any irreducible $\mathcal{U}_{\epsilon}^{\geq 0}(\mathfrak{g})$-module $V$, the following are equivalent:
\begin{itemize}
\item[(i)] $V$ is tensor-reducible;
\item[(ii)] $V$ is left tensor-reducible;
\item[(iii)] $V$ is right tensor-reducible;
\item[(iv)] $V$ is $1$-dimensional;
\item[(v)] $V$ is maximally stable.
\end{itemize}
\end{proposition}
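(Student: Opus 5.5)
The plan is to apply Theorem \ref{thm-tensred-insec2} and Theorem \ref{thm-whenalltriepare1d} to the affine Cayley-Hamilton Hopf algebra $(\mathcal{U}_{\epsilon}^{\geq 0}(\mathfrak{g}),\mathcal{C}_{\epsilon}^{\geq 0}(\mathfrak{g}),\text{tr}_{\text{reg}})$. As recalled above, its identity fiber algebra is isomorphic to $\mathfrak{u}_{\epsilon}^{\geq 0}(\mathfrak{g})$. This algebra is basic, so it has the Chevalley property and both theorems apply.

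First, Theorem \ref{thm-tensred-insec2} (a) immediately gives (i) $\Leftrightarrow$ (ii) $\Leftrightarrow$ (iii). Since the identity fiber algebra is basic, Theorem \ref{thm-tensred-insec2} (b) gives (i) $\Leftrightarrow$ (v). Next, (iv) $\Rightarrow$ (i) holds because every $1$-dimensional module is tensor-reducible, by \cite[Theorem 3.1(b)]{mi2025lowest}. This can also be seen directly: a $1$-dimensional module is maximally stable, since its stabilizer has order $\geq 1=(\dim V)^2$ and the inequality $|\text{Stab}_{G_0}[V]|\leq (\dim V)^2$ forces equality.

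It remains to prove (i) $\Rightarrow$ (iv), namely that every tensor-reducible irreducible module is $1$-dimensional. By Theorem \ref{thm-whenalltriepare1d}, it suffices to verify condition (ii) or (iii) there. Basicness is already known, so the task is to show either that all maximally stable irreducible modules are $1$-dimensional, or that
$$\mathcal{V}_{\ell}=W_{l}(G(H^{\circ}))(\mathfrak{m}_{\ovep})=W_{r}(G(H^{\circ}))(\mathfrak{m}_{\ovep}).$$
I would use Mi-Wu-Yakimov's description \cite[Theorem 5.5(b)]{mi2025lowest} of the lowest discriminant variety of $\mathcal{U}_{\epsilon}^{\geq 0}(\mathfrak{g})$. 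It identifies $\mathcal{V}_{\ell}$ with the set of maximal ideals $\mathfrak{m}$ of $\mathcal{C}_{\epsilon}^{\geq 0}(\mathfrak{g})$ containing all $E_{\beta_k}^{l}$, that is, the torus of points where only the $K_i^{\pm l}$ take arbitrary nonzero values. This torus equals the orbit of $\mathfrak{m}_{\ovep}$ under the winding automorphisms given by the characters $E_i\mapsto 0$, $K_i\mapsto \lambda_i\in\mathbb{C}^{\times}$. Those characters send $K_i^{l}\mapsto\lambda_i^{l}$, which covers every point of the torus, and they fix the ideal generated by the $E_{\beta_k}^{l}$. This gives (iii) of Theorem \ref{thm-whenalltriepare1d}, hence (i) there, which is the desired implication.

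An alternative route gives the same conclusion. For $\mathfrak{m}$ in that torus, the fiber algebra is isomorphic to $\mathfrak{u}_{\epsilon}^{\geq 0}(\mathfrak{g})$ as an algebra, so it is basic. Then the irreducible modules annihilated by $D_\ell$ are $1$-dimensional, and Theorem \ref{thm-tensred-insec2} (a) finishes the argument.

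The main obstacle is the identification of $\mathcal{V}_{\ell}$ with the winding orbit of $\mathfrak{m}_{\ovep}$, which is equivalent to basicness of the fibers over $\mathcal{V}_{\ell}$. I would take this from \cite[Theorem 5.5(b)]{mi2025lowest} rather than reprove it. I would only check that the characters of $\mathcal{U}_{\epsilon}^{\geq 0}(\mathfrak{g})$ vanish on every $E_i$, hence on every $E_{\beta_k}$, so that the left and right winding orbits coincide with this torus.
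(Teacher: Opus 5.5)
Your proposal is correct and follows essentially the paper's route: the paper cites \cite[Theorem 5.5(b)]{mi2025lowest} for $\mathcal{V}_{\ell}=W_{l}(G(H^{\circ}))(\mathfrak{m}_{\ovep})=W_{r}(G(H^{\circ}))(\mathfrak{m}_{\ovep})$ together with basicness of the identity fiber, and then concludes from Theorems \ref{thm-tensred-insec2} and \ref{thm-whenalltriepare1d}. Your extra check that the winding orbit of $\mathfrak{m}_{\ovep}$ is the torus $\{E_{\beta_k}^{l}=0\}$ is harmless but not needed, since the cited theorem already gives $\mathcal{V}_{\ell}$ in winding-orbit form.
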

\begin{proof}
Note that $(\mathcal{U}_{\epsilon}^{\geq 0}(\mathfrak{g}),\mathcal{C}_{\epsilon}^{\geq 0}(\mathfrak{g}),\text{tr}_{\text{reg}})$ is an affine Cayley-Hamilton Hopf algebra with basic identity fiber algebra. By \cite[Theorem 5.5 (b)]{mi2025lowest}, the lowest discriminant variety 
$$\mathcal{V}(D_{l^{n}+1}(\mathcal{U}_{\epsilon}^{\geq 0}(\mathfrak{g})/\mathcal{C}_{\epsilon}^{\geq 0}(\mathfrak{g});\text{tr}_{\text{reg}}))=W_{l}((\mathcal{U}_{\epsilon}^{\geq 0}(\mathfrak{g}))^{\circ})(\mathfrak{m}_{\ovep})=W_{r}((\mathcal{U}_{\epsilon}^{\geq 0}(\mathfrak{g}))^{\circ})(\mathfrak{m}_{\ovep}).$$
Hence the result follows from Theorems \ref{thm-tensred-insec2} and \ref{thm-whenalltriepare1d}.
\end{proof}
\begin{remark}
By using \cite[Theorem 5.7 (b)]{mi2025lowest}, the same argument shows that for any connected, simply connected complex simple algebraic group  $G$, all tensor-reducible irreducible representations of the De Concini-Lyubashenko quantized coordinate algebra $\mathcal{O}_{\epsilon}(G)$ \cite{MR1296515} at a primitive $l$-th root of unity $\epsilon$ (as before, some restrictions on $l$ are assumed) are $1$-dimensional.
\end{remark}

\section{Chevalley property of Hopf algebras}\label{sec-3}

\subsection{Chevalley property of Cayley-Hamilton Hopf algebras}
In this subsection, we undertake a detailed study of the Chevalley property for affine Hopf algebras that admit large central Hopf subalgebras after \cite{HMQWChev2025}. We begin by establishing the connection between the Chevalley property and the lowest discriminant ideals for Cayley-Hamilton Hopf algebras.

\begin{theorem}
Let $(H,C,\text{tr})$ be an affine Cayley-Hamilton Hopf algebra over $\mathbbm{k}$. Then the following are equivalent:\label{thm-Cheviff-ap}
\begin{itemize}
\item[(i)] $H$ has the Chevalley property;
\item[(ii)] The identity fiber algebra $H/\mathfrak{m}_{\ovep}H$ has the Chevalley property, and the lowest discriminant ideal $D_{\ell}(H/C;\text{tr})=0$;
\item[(iii)] The identity fiber algebra $H/\mathfrak{m}_{\ovep}H$ has the Chevalley property, and the lowest modified discriminant ideal $MD_{\ell}(H/C;\text{tr})=0$;
\item[(iv)]The identity fiber algebra $H/\mathfrak{m}_{\ovep}H$ has the Chevalley property, and all the discriminant ideals of $(H,C,\text{tr})$ are trivial;
\item[(v)]The identity fiber algebra $H/\mathfrak{m}_{\ovep}H$ has the Chevalley property, and all the modified discriminant ideals of $(H,C,\text{tr})$ are trivial.
\end{itemize}
\end{theorem}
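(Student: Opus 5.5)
The plan is to route everything through Theorem \ref{thm-tensred-insec2} (a), the Brown-Yakimov description \eqref{eq-thm-BY-disrep}, and the reducedness of $C$ (Cartier's theorem). The chain I aim for is (i) $\Rightarrow$ (ii) $\Leftrightarrow$ (iii), (iv) $\Rightarrow$ (ii) $\Rightarrow$ (iv), (v) $\Leftrightarrow$ (iii), and finally (ii) $\Rightarrow$ (i).

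For (i) $\Rightarrow$ (ii), first show that the identity fiber algebra inherits the Chevalley property. If $V,W$ are irreducible $H/\mathfrak{m}_{\ovep}H$-modules, they are irreducible $H$-modules, so $V\otimes W$ is a completely reducible $H$-module. This tensor product is an $H/\mathfrak{m}_{\ovep}H$-module via $\Delta_{\ovep,\ovep}$, and its $H$-submodules coincide with its $H/\mathfrak{m}_{\ovep}H$-submodules, so it is completely reducible over the fiber. Next, every irreducible $H$-module is tensor-reducible, so Theorem \ref{thm-tensred-insec2} (a) shows that $D_{\ell}(H/C;\text{tr})$ annihilates every irreducible $H$-module. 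Since $D_\ell\subseteq C$, this gives $D_{\ell}\subseteq \text{Ann}_C V=\mathfrak{m}$ for every $\mathfrak{m}\in\operatorname{maxSpec}C$, using the surjectivity of $\operatorname{maxSpec}H\to\operatorname{maxSpec}C$. As $C$ is affine and reduced, the Jacobson radical of $C$ is zero, hence $D_\ell=0$. The same argument gives $MD_\ell=0$.

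The converse (ii) $\Rightarrow$ (i) is then immediate: if $D_\ell=0$, every irreducible $V$ is annihilated by $D_\ell$, so by Theorem \ref{thm-tensred-insec2} (a) it is tensor-reducible, and $H$ has the Chevalley property. The equivalence (ii) $\Leftrightarrow$ (iii) follows from $\mathcal{V}(D_\ell)=\mathcal{V}(MD_\ell)$ together with reducedness of $C$: a vanishing ideal has zero set equal to all of $\operatorname{maxSpec}C$, and conversely if the zero set is everything, the ideal lies in the nilradical, which is $0$.

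For (ii) $\Rightarrow$ (iv), use the chain $\mathcal{V}_\ell\subseteq\mathcal{V}_k$ for $k\ge \ell$: when $\mathcal{V}_\ell=\operatorname{maxSpec}C$, each $D_k$ with $k\ge\ell$ has full zero set and hence vanishes. For $k<\ell$, $\mathcal{V}_k=\varnothing$, so $D_k=C$ by the Nullstellensatz. The analogous argument handles the modified discriminant ideals, giving (iii) $\Rightarrow$ (v).

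For (iv) $\Rightarrow$ (ii), triviality of $D_\ell$ means it is $0$ or $C$. It cannot be $C$, because $\mathcal{V}_\ell\neq\varnothing$ by the defining property of $\ell$; indeed $\mathfrak{m}_{\ovep}\in\mathcal{V}_\ell$ by \eqref{eq-CHHopf-lowsd}. So $D_\ell=0$. The same reasoning gives (v) $\Rightarrow$ (iii).

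The main point needing care is the consistent use of $\ell$. Under the hypothesis in (iv) or (v) that the identity fiber has the Chevalley property, $\ell$ is the integer in \eqref{eq-CHHopf-lowsd}, so Theorem \ref{thm-tensred-insec2} applies. In (i) $\Rightarrow$ (ii), one must first establish the Chevalley property of the identity fiber before invoking it, which is why that step comes first in the ordering above.
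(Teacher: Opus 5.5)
Your proof is correct and essentially matches the paper's: the equivalences among (ii)--(v) come from \eqref{eq-thm-BY-disrep} together with reducedness of $C$, and (ii)$\Rightarrow$(i) is exactly Theorem \ref{thm-tensred-insec2}(a). The only difference is in (i)$\Rightarrow$(ii)/(iv). The paper simply cites \cite[Theorem 4.6]{HMQWChev2025}, whereas you derive it internally: the identity fiber inherits the Chevalley property, then Theorem \ref{thm-tensred-insec2}(a) forces $D_{\ell}$ into every maximal ideal of the reduced affine algebra $C$, so $D_{\ell}=0$. This derivation is valid and not circular, since the proof of Theorem \ref{thm-tensred-insec2} does not use \cite[Theorem 4.6]{HMQWChev2025}.
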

\begin{proof}
Note that $C$ is reduced by Cartier's Theorem. Hence the equivalence (ii) $\Leftrightarrow$ (iii) $\Leftrightarrow$ (iv) $\Leftrightarrow$ (v) follows immediately from \eqref{eq-thm-BY-disrep}. The implication (i) $\Rightarrow$ (iv) follows from \cite[Theorem 4.6]{HMQWChev2025}. Thus it suffices to prove (ii) $\Rightarrow$ (i). Assume that the lowest discriminat ideal $D_{\ell}(H/C;\text{tr})=0$. Then all irreducible $H$-modules are tensor-reducible by  Theorem \ref{thm-tensred-insec2} (a). If follows that $H$ has the Chevalley property.
\end{proof}

There exist affine Cayley-Hamilton Hopf algebras all of whose discriminant ideals are trivial, but they do not have the Chevalley property. For instance, pick any finite-dimensional Hopf algebra $H$ without the Chevalley property over $\mathbbm{k}$ and set $C=\mathbbm{k}$.

For any affine prime Hopf algebra $H$ that admits a large central Hopf subalgebra $C$, it was shown in \cite[Corollary 4.10]{HMQWChev2025} that $H$ has the Chevalley property if and only if $H$ is commutative. However, this is not the case when $H$ is only assumed to be semiprime \cite[Remark 4.11]{HMQWChev2025}. Below is an example of an affine Cayley-Hamilton Hopf algebra that has the Chevalley property but is not semiprime.



\begin{example}
Let $n\geq 2$ be a positive integer and let $\epsilon$ be a primitive $n$-th root of unity.\label{eg-aCheegwith1dimz-u2} Consider the Hopf algebra $H$ generated by $x,g^{\pm 1}$ with relations $gg^{-1}=g^{-1}g=1,xg=\epsilon gx$ and $x^{n}=0$. There is a unique Hopf algebra structure on $H$ such that
$$\Delta(g)=g\otimes g,\ \varepsilon(g)=1,\ \Delta(x)=x\otimes g+1\otimes x,\ \varepsilon(x)=0.$$
Clearly, $H$ is not semiprime. Note that $C=\mathbbm{k}[g^{\pm n}]$ is a central Hopf subalgebra of $H$ and $H$ is a free $C$-module of rank $n^{2}$. It follows that $(H,C,\text{tr}_{\text{reg}})$ is an affine Cayley-Hamilton Hopf algebra of degree $n^{2}$. It is clear that the identity fiber algebra $H/\mathfrak{m}_{\ovep}H=\mathbbm{k}\langle x,g\rangle/(g^{n}-1,x^{n},xg-\epsilon gx)$ is the $n^{2}$-dimensional Taft algebra, and hence basic. Moreover, it is not hard to see that all fiber algebras admit $1$-dimensional representations. By \cite[Theorem 4.3 (a)(b)]{mi2025lowest}, the lowest discriminant subvariety is the entire maximal spectrum $\operatorname{maxSpec}C$. It is clear  that $H$ has the Chevalley property.
\end{example}

In \cite[Corollary 4.10]{HMQWChev2025}， the authors also proved that a prime affine Hopf algebra $H$ admitting a large central Hopf subalgebra $C$ has the Chevalley property if and only if its identity fiber algebra $H/\mathfrak{m}_{\ovep}H$ is semisimple. It is worth noting that the same statement remains valid in the semiprime setting.

\begin{proposition}
Let $(H,C)$ be a pair of Hopf algebras satisfying the hypothesis (FinHopf). Then the following are equivalent.\label{prop-semiprimeChev}
\begin{itemize}
\item[(a)] $H$ is regular (i.e., the global dimension of $H$ is finite) and has the Chevalley property;
\item[(b)] $H$ is semiprime and has the Chevalley property;
\item[(c)] The identity fiber algebra $H/\mathfrak{m}_{\ovep}H$ is semisimple;
\item[(d)] All identity fiber algebras $H/\mathfrak{m}H$ are semisimple.
\end{itemize}
\end{proposition}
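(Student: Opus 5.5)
The plan is to prove the cycle (c) $\Rightarrow$ (d) $\Rightarrow$ (a), (d) $\Rightarrow$ (b), and then (a) $\Rightarrow$ (b) $\Rightarrow$ (c). Here (d) is read as ``all fiber algebras $H/\mathfrak{m}H$ are semisimple''; with that reading (d) $\Rightarrow$ (c) is trivial.

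\emph{(c) $\Rightarrow$ (d).} By Lemma \ref{lem-biGalo-allfibalg1}, each $H/\mathfrak{m}H$ is an $H/\mathfrak{m}_{\ovep}H$-Galois object, and $H/\mathfrak{m}H\text{-mod}$ is an exact module category over $H/\mathfrak{m}_{\ovep}H\text{-mod}$ (as recalled in the introduction, via \cite[Proposition 2.8]{HMQWChev2025}). If $H/\mathfrak{m}_{\ovep}H$ is semisimple, this tensor category is fusion, and an exact module category over a semisimple tensor category is semisimple. Hence $H/\mathfrak{m}H$ is semisimple. More concretely: projectivity of $P\otimes M$ for every module $M$ lets us take $P=\mathbbm{k}$ projective, so every $M\cong \mathbbm{k}\otimes M$ is projective.

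\emph{(d) $\Rightarrow$ (a) and (d) $\Rightarrow$ (b).} Regularity follows from \cite[Theorem 2.9 (8)]{MR4201485}, since $H/\mathfrak{m}_{\ovep}H$ is semisimple. For the Chevalley property, take irreducible $V,W$ with central characters $\mathfrak{m},\mathfrak{n}$. By \eqref{eq-bifunct-conv-fiberalgsten}, $V\otimes W$ is a module over the semisimple algebra $H/(\mathfrak{m}\ast\mathfrak{n})H$, so it is completely reducible. For semiprimeness, every maximal ideal of $H$ contains a unique $\mathfrak{m}H$, and semisimplicity of $H/\mathfrak{m}H$ gives that the maximal ideals over $\mathfrak{m}$ intersect exactly in $\mathfrak{m}H$. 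Therefore $\operatorname{Jac}(H)=\bigcap_{\mathfrak{m}}\mathfrak{m}H$. Now ${}_CH$ is projective \cite[Theorem 2.5]{HMQWChev2025}, so it embeds in some $C^{N}$. Since $C$ is reduced and affine (Cartier), it is Jacobson with $\bigcap_{\mathfrak{m}}\mathfrak{m}=0$, and hence $\bigcap_{\mathfrak{m}}\mathfrak{m}H=0$. Thus $H$ is semiprimitive, hence semiprime.

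\emph{(b) $\Rightarrow$ (c).} Use the Cayley-Hamilton structure $(H,C,\text{tr}_{HS})$ of degree $n=\operatorname{rank}_{C}H$. Since $H$ is Chevalley, its quotient $H/\mathfrak{m}_{\ovep}H$ is Chevalley, because modules over a Hopf quotient have the same tensor product. By Theorem \ref{thm-Cheviff-ap} we then get $D_{\ell}=0$, so $\mathcal{V}_\ell=\operatorname{maxSpec}C$ and $\operatorname{Sd}$ is constant, equal to $\operatorname{Sd}(\mathfrak{m}_{\ovep})$, by Lemma \ref{lem-sdeqiffinlows3}. On the other hand, $H$ is semiprime and projective over $C$. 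Localizing at the minimal primes of $C$ therefore gives a semisimple artinian algebra on which $\text{tr}_{HS}$ is the regular trace, and in characteristic zero its trace form is nondegenerate. Hence $D_{n}(H/C;\text{tr}_{HS})\neq 0$, and by \eqref{eq-thm-BY-disrep} some $\mathfrak{m}$ has $\operatorname{Sd}(\mathfrak{m})\ge n=\dim_{\mathbbm{k}} H/\mathfrak{m}H$, i.e.\ $\operatorname{Sd}(\mathfrak{m})=n$. Constancy of $\operatorname{Sd}$ then gives $\operatorname{Sd}(\mathfrak{m}_{\ovep})=n=\dim_{\mathbbm{k}}H/\mathfrak{m}_{\ovep}H$, so $H/\mathfrak{m}_{\ovep}H$ is semisimple.

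\emph{(a) $\Rightarrow$ (b).} It remains to show that regular implies semiprime. I expect this to be the main obstacle. My first attempt is to invoke the structure theory of regular noetherian affine PI Hopf algebras: such an $H$ has finite injective dimension, is AS-Gorenstein and homologically homogeneous (Brown-Goodearl), and a noetherian homologically homogeneous ring is a finite direct sum of prime rings, hence semiprime. If that citation is not available in the needed generality, the fallback is to argue that finite global dimension of $H$ forces finite global dimension of the localizations $H_{\mathfrak{p}}$ at minimal primes $\mathfrak{p}$ of $C$. These are artinian, and an artinian ring of finite global dimension that is projective over a field with Frobenius-type duality (inherited from the Hopf structure) is semisimple. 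From this one deduces that the nilradical of $H$, which is $C$-torsion-free as a submodule of a projective module, vanishes.
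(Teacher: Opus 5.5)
Your proof is correct, and its overall skeleton matches the paper's. Your first attempt at (a)$\Rightarrow$(b) is exactly the paper's argument, via \cite[Proposition III.8.2]{MR1898492} and \cite[Corollary 4.2]{MR719665}. Regularity in (c)$\Rightarrow$(a) comes from \cite[Theorem 2.9 (8)]{MR4201485} in both. In both, (b)$\Rightarrow$(c) combines constancy of $\operatorname{Sd}$ under the Chevalley property with the existence of one semisimple fiber.

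The genuine difference is how that semisimple fiber is obtained from semiprimeness.
\begin{itemize}
\item The paper's Lemma \ref{lem-semipri-ssfa} argues by separability. After inverting the regular elements of $C$, the algebra $H_{T}$ is separable over $C_{T}$, so $\operatorname{Ext}^{1}_{H^{e}}(H,K)$ vanishes there. Hence it vanishes at some $\mathfrak{m}_{0}$, and $H/\mathfrak{m}_{0}H$ is separable by \cite[Theorem 7.1]{MR280479}.
\item You instead use the fact that, in characteristic zero, the regular trace form of the semisimple algebra $H_{T}$ is nondegenerate. This gives $D_{n}(H/C;\text{tr}_{HS})\neq 0$. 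The Brown--Yakimov equality \eqref{eq-thm-BY-disrep} then yields some $\mathfrak{m}$ with $\operatorname{Sd}(\mathfrak{m})=n=\dim_{\mathbbm{k}}H/\mathfrak{m}H$.
\end{itemize}
Your route stays inside the discriminant machinery the paper already relies on. Run over every minimal prime of $C$, it even shows that the semisimple locus $\operatorname{maxSpec}C\setminus\mathcal{V}_{n}$ is dense. The paper's lemma, by contrast, is trace-free and purely ring-theoretic.

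Your other steps are also sound: (c)$\Rightarrow$(d) via exactness of $H/\mathfrak{m}H\text{-mod}$ (projectivity of $\mathbbm{k}\otimes M$), and (d)$\Rightarrow$ semiprime via $\operatorname{Jac}(H)=\bigcap_{\mathfrak{m}}\mathfrak{m}H=0$ using projectivity of ${}_{C}H$.

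Drop the fallback you sketched for (a)$\Rightarrow$(b). As written it is not an argument: $H_{\mathfrak{p}}$ carries no Hopf structure, and an artinian ring of finite global dimension need not be semisimple without self-injectivity. It is also unnecessary, since the Brown--Goodearl citation suffices.
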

\begin{remark}
For any semiprime affine Hopf algebra $H$ admitting large central Hopf subalgebra $C$, $H$ is commutative if and only if all irreducible representations of $H$ are $1$-dimensional (the semiprime condition is necessary by Example \ref{eg-aCheegwith1dimz-u2}). In fact, if $H$ is semiprime and all its irreducible representations are $1$-dimensional, then all the fiber algebras $H/\mathfrak{m}H$ are commutative (since they are both basic and semisimple). Therefore, for any $a,b\in H$, one has
$$ab-ba\in \bigcap_{\mathfrak{m}\in \operatorname{maxSpec}C}\mathfrak{m}H\subseteq \text{Jac}(H)=0.$$
The last equality follows from the fact that $H$ is semiprime and $H$ is a Jacobson ring \cite[Theorem 13.10.3 (iii)]{MR1811901}. This shows that $H$ is commutative. The conclusion in the other direction is clear. 
\end{remark}
We will need the following.
\begin{lemma}
Let $(H,C)$ be a pair of Hopf algebras satisfying the hypothesis (FinHopf). If $H$ is semiprime, then there is $\mathfrak{m}_{0}\in \operatorname{maxSpec}C$ such that the fiber algebra $H/\mathfrak{m}_{0}H$ is semisimple.\label{lem-semipri-ssfa}
\end{lemma}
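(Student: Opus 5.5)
We will use that a semiprime affine noetherian PI algebra $H$ which is module-finite over the central subalgebra $C$ is generically Azumaya, in the following sense. Since $H$ is semiprime noetherian, its Goldie quotient ring $Q(H)$ is semisimple artinian. Moreover, $Q(H)\cong H\otimes_{C}K$, where $K$ is the total ring of fractions of the reduced ring $C$ (Cartier), so $K$ is a finite product of fields $K_{1}\times\cdots\times K_{r}$ corresponding to the minimal primes of $C$. Hence each $H\otimes_{C}K_{i}$ is a semisimple $K_{i}$-algebra.

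The plan is to work with the discriminant or trace form. By characteristic zero, each $H\otimes_C K_i$ is separable over $K_i$, so its reduced trace form is nondegenerate. We could also work with the Hattori–Stallings trace, which is available since $_{C}H$ is projective of constant rank $n$ \cite[Theorem 2.5]{HMQWChev2025}. Then choose a nonzero $c\in C$, not lying in any minimal prime, such that $H[c^{-1}]$ is a free $C[c^{-1}]$-module. We can also arrange, after inverting $c$, that the discriminant of a basis with respect to the (regular) trace is a unit in $C[c^{-1}]$: its image in each $K_i$ is nonzero because $H\otimes_C K_i$ is semisimple in characteristic zero, so the regular trace form is nondegenerate. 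Thus the discriminant $\delta\in C[c^{-1}]$ is not in any minimal prime, and there exists a maximal ideal $\mathfrak{m}_0$ of $C$ containing neither $c$ nor $\delta$. Such an $\mathfrak{m}_0$ exists because $c\delta$, suitably cleared of denominators, is a nonzero element of the reduced affine algebra $C$, and is not nilpotent, so it avoids some maximal ideal by the Jacobson property.

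At such an $\mathfrak{m}_0$, the fiber $H/\mathfrak{m}_0H$ is a finite-dimensional algebra whose regular trace form $(a,b)\mapsto\operatorname{tr}(ab)$ is nondegenerate, being the reduction of $\delta\neq 0$ modulo $\mathfrak{m}_0$. A standard argument then gives semisimplicity. The Jacobson radical $J$ consists of nilpotent elements, so $\operatorname{tr}(ja)=0$ for $j\in J$ and all $a$, because $ja$ is nilpotent and left multiplication by it has trace zero. Thus $J$ lies in the radical of the trace form, hence $J=0$.

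The main obstacle is checking that the regular trace form of $H\otimes_C K_i$ is nondegenerate. For a semisimple algebra over a field of characteristic zero, the regular trace restricted to each simple component $M_{d}(D)$ is a positive integer multiple of the reduced trace composed with the trace of the center. This is nondegenerate, since separable extensions have nondegenerate trace forms in characteristic zero. The rest consists of routine localization bookkeeping, ensuring that freeness and the nonvanishing of the discriminant hold simultaneously on an open dense set of $\operatorname{maxSpec}C$.
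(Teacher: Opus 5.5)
Your proof is correct, but it takes a different route from the paper's.

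\textbf{The paper's route.} The paper argues homologically. With $H^{e}=H\otimes_{C}H^{op}$ and $\mu\colon H^{e}\to H$ the multiplication, it shows by contradiction that the finitely generated $C$-module $\text{Ext}^{1}_{H^{e}}(H,\ker\mu)$ cannot be supported on all of $\operatorname{maxSpec}C$. If it were, its annihilator would be $0$, since $C$ is reduced and Jacobson. Yet it vanishes after inverting the regular elements of $C$, because $H$ localized there is semisimple and hence separable in characteristic zero. At a point $\mathfrak{m}_{0}$ off the support, $H_{\mathfrak{m}_{0}}$ is separable over $C_{\mathfrak{m}_{0}}$, and separability passes to the fiber $H/\mathfrak{m}_{0}H$.

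\textbf{Your route.} You replace the separability criterion by the trace form. You restrict to a free locus $D(c)$ and take the discriminant $\delta$ of the regular trace there. It is nonzero at each generic point because $H\otimes_{C}K_{i}$ is semisimple in characteristic zero. You finish with the classical fact that a nondegenerate trace form forces the Jacobson radical of the fiber to vanish.

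\textbf{Comparison.} Your argument is more elementary and explicit: there is no localization of Ext and no descent of separability, and you get a concrete dense open set $D(c\delta')$ of semisimple fibers. It also fits the paper's theme, since in effect it shows that the top discriminant ideal of the Hattori--Stallings trace is nonzero and that the fibers off its zero locus are semisimple.

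The price is the bookkeeping you defer, which does go through:
\begin{itemize}
\item Torsion-freeness of $H$ over $C$, which follows from projectivity of ${}_{C}H$, is what identifies $H\otimes_{C}K$ with the Goldie quotient ring.
\item The same torsion-freeness makes the map $C^{n}\to H$ built from a $K$-basis of $H\otimes_{C}K$ injective.
\item Constant rank is what lets a single regular element $c$ kill the cokernel of that map.
\end{itemize}
The paper's argument avoids bases altogether and gives the slightly stronger conclusion that $H$ is separable over $C$ at $\mathfrak{m}_{0}$.
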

\begin{proof}
Let $H^{e}\coloneqq H\otimes_{C}H^{op}$, $K$ be the kernel of the multiplication map $\mu:H^{e}\to H, x\otimes y\mapsto xy$. Then $0 \to  K \to  H^{e} \xrightarrow{\mu}  H \to 0$ is a short exact sequence of left $H^{e}$-modules.

We claim that there is some $\mathfrak{m}_{0}\in \operatorname{maxSpec}C$ such that $\text{Ext}_{(H_{\mathfrak{m}_{0}})^{e}}^{1}(H_{\mathfrak{m}_{0}},K_{\mathfrak{m}_{0}})=0$. Once the claim is established, $H_{\mathfrak{m}_{0}}$ is a separable $C_{\mathfrak{m}_{0}}$-algebra. So, $H/\mathfrak{m}_{0}H$ is separable over $C/\mathfrak{m}_{0}\cong \mathbbm{k}$ \cite[Theorem 7.1]{MR280479}. In particular, $H/\mathfrak{m}_{0}H$ is semisimple. Now we proceed to prove the aforementioned claim. Assume, by way of contradiction, that $\text{Ext}_{(H_{\mathfrak{m}})^{e}}^{1}(H_{\mathfrak{m}},K_{\mathfrak{m}})\neq 0$ for all $\mathfrak{m}\in \operatorname{maxSpec}C$. Note that $H^{e}$ is noetherian. Then
$$(\text{Ext}_{H^{e}}^{1}(H,K))_{\mathfrak{m}}\cong \text{Ext}_{(H_{\mathfrak{m}})^{e}}^{1}(H_{\mathfrak{m}},K_{\mathfrak{m}})\neq 0$$
for all $\mathfrak{m}\in \operatorname{maxSpec}C$. Since $C$ is reduced, the annihilator $\text{Ann}_{C}(\text{Ext}_{H^{e}}^{1}(H,K))=0$. Let $T$ be the multiplicative subset of $C$ consisting of all regular elements. Then $C_{T}$ is semisimple artinian. It follows that $H_{T}$ is also semisimple artinian. Since the base field we work over is of characteristic zero, $H_{T}$ is separable over $C_{T}$. Hence $(\text{Ext}_{H^{e}}^{1}(H,K))_{T}=0$, which implies that there is some $t\in T$ such that $t\in \text{Ann}_{C}(\text{Ext}_{H^{e}}^{1}(H,K))=0$. Contradiction!
\end{proof}

\begin{proof}[Proof of Proposition \upshape\ref{prop-semiprimeChev}]
The implication (a) $\Rightarrow$ (b) follows from \cite[Proposition. III. 8.2]{MR1898492} and \cite[Corollary 4.2]{MR719665}. The implication (c) $\Rightarrow$ (a) is \cite[Theorem 2.9 (8)]{MR4201485}. The equivalences (b) $\Leftrightarrow$ (c) $\Leftrightarrow$ (d) follows immediately from the proof of \cite[Corollary 4.10]{HMQWChev2025} and Lemma \ref{lem-semipri-ssfa}.
\end{proof}

The following proposition enables us to construct numerous examples of affine Hopf algebras which admit large central Hopf subalgebras and have the Chevalley property.
\begin{proposition}\label{prop-tensorChevisChev}
Let $(H,C)$ and $(\widetilde{H},\widetilde{C})$ be two pairs satisfying the hypothesis (FinHopf). Then $(H\otimes \widetilde{H},C\otimes \widetilde{C})$ satisfies the hypothesis (FinHopf) as well. If, in addition,  both $H$ and $\widetilde{H}$ have the Chevalley property, then so does $H\otimes \widetilde{H}$.
\end{proposition}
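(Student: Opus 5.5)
The first assertion is routine. $H\otimes\widetilde{H}$ is a Hopf algebra, affine because it is generated by the union of generators of the two factors. $C\otimes\widetilde{C}$ is a Hopf subalgebra of it, and it is central because $C$ and $\widetilde{C}$ are central in their respective factors. If $H=\sum_i Ch_i$ and $\widetilde{H}=\sum_j\widetilde{C}\widetilde{h}_j$, then the elements $h_i\otimes\widetilde{h}_j$ generate $H\otimes\widetilde{H}$ as a $C\otimes\widetilde{C}$-module. Hence (FinHopf) holds.

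For the Chevalley property, I would describe the irreducible modules of $H\otimes\widetilde{H}$ and then reduce to the two factors.

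\textbf{Step 1: irreducible modules.} All irreducible modules involved are finite-dimensional, by the discussion in \S\ref{sec-modufinalg} applied to $H\otimes\widetilde{H}$. Since $\mathbbm{k}$ is algebraically closed, every irreducible $H\otimes\widetilde{H}$-module should be of the form $V\boxtimes\widetilde{V}$ with $V\in\text{Irr}(H)$ and $\widetilde{V}\in\text{Irr}(\widetilde{H})$. To see this, take an irreducible $U$ and note that it is annihilated by the ideal $\mathfrak{m}\otimes\widetilde{C}+C\otimes\widetilde{\mathfrak{m}}$ for a suitable maximal ideal of $C\otimes\widetilde{C}$. Such a maximal ideal corresponds to a pair $(\mathfrak{m},\widetilde{\mathfrak{m}})$ because $\mathbbm{k}$ is algebraically closed. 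So $U$ is a module over the finite-dimensional algebra $H/\mathfrak{m}H\otimes\widetilde{H}/\widetilde{\mathfrak{m}}\widetilde{H}$. For finite-dimensional algebras $A,B$ over an algebraically closed field, the standard facts are
\[
(A\otimes B)/\big(J(A)\otimes B+A\otimes J(B)\big)\cong A/J(A)\otimes B/J(B),
\]
which is semisimple, and $\text{Irr}(A\otimes B)=\{V\boxtimes W\}$.

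\textbf{Step 2: tensor products.} Take irreducible modules $V\boxtimes\widetilde{V}$ and $W\boxtimes\widetilde{W}$. The coproduct on $H\otimes\widetilde{H}$ is the factorwise one, so
\[
(V\boxtimes\widetilde{V})\otimes(W\boxtimes\widetilde{W})\cong (V\otimes W)\boxtimes(\widetilde{V}\otimes\widetilde{W})
\]
as $H\otimes\widetilde{H}$-modules, via the middle-factor swap. By the Chevalley property of each factor, $V\otimes W\cong\bigoplus_s X_s$ and $\widetilde{V}\otimes\widetilde{W}\cong\bigoplus_t\widetilde{X}_t$, with all summands irreducible. The outer tensor product distributes over direct sums, so the module above is $\bigoplus_{s,t}X_s\boxtimes\widetilde{X}_t$. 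Each $X_s\boxtimes\widetilde{X}_t$ is irreducible: by Jacobson density and algebraic closedness, $H\to\text{End}(X_s)$ and $\widetilde{H}\to\text{End}(\widetilde{X}_t)$ are surjective, so $H\otimes\widetilde{H}$ surjects onto $\text{End}(X_s\otimes\widetilde{X}_t)$. Hence the tensor product is completely reducible.

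\textbf{Main obstacle.} The only nontrivial point is Step 1, the claim that every irreducible module is an outer tensor product. I would handle it through the fiber reduction to finite-dimensional algebras sketched there, which is where algebraic closedness of $\mathbbm{k}$ is essential. Alternatively, one can apply density to $U$ directly: $H$ acts on $U$ through a finite-dimensional image, pick an irreducible $H$-submodule $V\subseteq U$, and identify $U\cong V\boxtimes\text{Hom}_H(V,U)$.
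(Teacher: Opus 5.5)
Your proof is correct and follows essentially the same route as the paper. Both arguments identify the irreducible $H\otimes\widetilde{H}$-modules as the outer tensor products $X\otimes\widetilde{X}$ and then apply the middle-swap isomorphism $(X\otimes Y)\otimes(\widetilde{X}\otimes\widetilde{Y})\cong(X\otimes\widetilde{X})\otimes(Y\otimes\widetilde{Y})$. The only difference is that the paper cites the standard theorem on irreducible representations of a tensor product of algebras, whereas you justify that fact yourself via the fiber reduction and the density theorem.
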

\begin{proof}
The first statement is clear. We proceed to prove the second. Assume that both $H$ and $\widetilde{H}$ have the Chevalley property. Note that the irreducible modules of the three Hopf algebras $H,\widetilde{H} $ and $H\otimes \widetilde{H}$ are all finite-dimensional over $\mathbbm{k}$ \cite[Theorem 13.10.3(i)]{MR1811901}. By \cite[Theorem 3.10.2]{MR2808160}, for any irreducible $H$-module $X$ and any irreducible $\widetilde{H}$-module $\widetilde{X}$, the tensor product $X\otimes \widetilde{X}$ is an irreducible $H\otimes \widetilde{H}$-module, and evey irreducible $H\otimes \widetilde{H}$-module is of this form up to isomorphism. Moreover, for any $H$-modules $X,Y$ and any $\widetilde{H}$-modules $\widetilde{X},\widetilde{Y}$, one has a canonical isomorphism
$$ (X\otimes Y)\otimes (\widetilde{X}\otimes \widetilde{Y})\cong  (X\otimes \widetilde{X}) \otimes  (Y\otimes \widetilde{Y})$$
as $H\otimes \widetilde{H}$-modules. It follows that $H\otimes \widetilde{H}$ has the Chevalley property.
\end{proof}

For any positive integer $d$ and any finite group $G$, the tensor product algebra $B=\mathbbm{k}[x_{1},...,x_{d}]\otimes \mathbbm{k}G$ is module-finite over the central Hopf subalgebra $\mathbbm{k}[x_{1},...,x_{d}]\otimes 1$ (the Hopf algebra structure on $\mathbbm{k}[x_{1},...,x_{d}]$ considered here comes from viewing $\mathbbm{k}[x_{1},...,x_{d}]$ as the coordinate ring of the affine algebraic group $(\mathbbm{k}^{d},+)$) and $B$ has the Chevalley property by Proposition \ref{prop-tensorChevisChev}. 
Note that its identity fiber algebra is $\mathbbm{k}G$, by Proposition \ref{prop-semiprimeChev} and \cite[Theorem 0.2 (1)]{MR1938745}, $B$ is semiprime and Artin-Schelter regular. Thus, for any positive integer $d$, there are Artin-Schelter regular Hopf algebras of GK-dimension $d$ that are not commutative, admit large central Hopf subalgebras, and have the Chevalley property.

%

\subsection{Rigidity of the lowest discriminant varieties}
Theorem \ref{thm-lowestsetsubgrp} below reflects the \textit{rigid} nature of the lowest discriminant subvarieties of Cayley-Hamilton Hopf algebras and helps \textit{predict} and \textit{determine} the lowest discriminant subvarieties in certain examples of low GK-dimension. Hence, it may be regarded as a rigidity theorem for the lowest discriminant varieties.
\begin{theorem} \label{thm-lowestsetsubgrp}
Let $(H,C,\text{tr})$ be an affine Cayley-Hamilton Hopf algebra such that the identity fiber algebra $H/\mathfrak{m}_{\ovep}H$ has the Chevalley property. Then the lowset discriminant subvariety $\mathcal{V}( D_{\ell}(H/C;\text{tr}) )$ is a subgroup of the affine algebraic group $\operatorname{maxSpec}C$. 
\end{theorem}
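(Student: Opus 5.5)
We need to show that $\mathcal{V}_\ell=\{\mathfrak{m}: \text{Sd}(\mathfrak{m})=\text{Sd}(\mathfrak{m}_{\ovep})\}$ (Lemma \ref{lem-sdeqiffinlows3}) contains the identity, is closed under inverses, and is closed under convolution. The identity is clear. Closedness in the Zariski topology is already known, since $\mathcal{V}_\ell$ is the zero set of $D_\ell(H/C;\text{tr})$. The key tool is Theorem \ref{thm-tensred-insec2}(a). By that theorem, $\mathfrak{m}\in\mathcal{V}_\ell$ if and only if some (equivalently, by Proposition \ref{prop-tensred-fibalg}, every) irreducible $H/\mathfrak{m}H$-module is tensor-reducible. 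Here we use that every fiber has an irreducible module, because $\pi:\operatorname{maxSpec}H\to\operatorname{maxSpec}C$ is surjective.

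\emph{Products.} Let $\mathfrak{m},\mathfrak{n}\in\mathcal{V}_\ell$, and pick irreducible modules $V$ over $H/\mathfrak{m}H$ and $W$ over $H/\mathfrak{n}H$. Both are tensor-reducible, so $V\otimes W$ is tensor-reducible, as noted in \S\ref{sub-tensred}. In particular $V\otimes W$ is completely reducible, and it is nonzero. By \eqref{eq-bifunct-conv-fiberalgsten} it is an $H/(\mathfrak{m}\ast\mathfrak{n})H$-module. Any irreducible summand $L$ of $V\otimes W$ is then a direct summand of a tensor-reducible module, hence itself tensor-reducible, and $L$ is an irreducible $H/(\mathfrak{m}\ast\mathfrak{n})H$-module. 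Theorem \ref{thm-tensred-insec2}(a)(i)$\Rightarrow$(iv) gives $\mathfrak{m}\ast\mathfrak{n}\supseteq D_\ell(H/C;\text{tr})$, that is, $\mathfrak{m}\ast\mathfrak{n}\in\mathcal{V}_\ell$.

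\emph{Inverses.} Let $V$ be an irreducible $H/\mathfrak{m}H$-module with $\mathfrak{m}\in\mathcal{V}_\ell$. The action on $V^*$ is $(h\xi)(v)=\xi(S(h)v)$, so $\text{Ann}_C V^*=S(\mathfrak{m})=\mathfrak{m}^{-1}$, and $V^*$ is an irreducible $H/\mathfrak{m}^{-1}H$-module. It remains to see that $V^*$ is tensor-reducible. Criterion (vi) of Theorem \ref{thm-tensred-insec2}(a) asks for $V^*\otimes V^{**}$ to be completely reducible. Since $S^2$ need not be the identity, $V^{**}\not\cong V$ in general, and this is the step I expect to be the main obstacle. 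I would try to dodge it as follows. The module $V\otimes V^*$ is completely reducible by (vi) for $V$. Dualizing, $(V\otimes V^*)^*\cong V^{**}\otimes V^*$ is completely reducible, because duals of irreducible modules are irreducible. Now $V^{**}$ is the twist of $V$ by $S^2$. Its annihilator in $C$ is $S^2(\mathfrak{m})=\mathfrak{m}$, and it is irreducible. So $V^{**}$ is an irreducible $H/\mathfrak{m}H$-module, hence tensor-reducible by Proposition \ref{prop-tensred-fibalg}.

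From here there are two ways to finish. The cleanest uses the right dual: $V^{**}\otimes V^*=(V^{**})\otimes (V^{**})^{\ast'}$, where ${}^*(V^{**})\cong V^*$, and the right-dual analogue of criterion (vi) applied to the irreducible module $V^*$ gives $V^*\otimes V^{**}$ completely reducible. Alternatively, one can apply criterion (iii) directly: the module $W\otimes V^*$ with $W=V^{**}$ is completely reducible. To conclude that $V^*$ is tensor-reducible from this single instance, I would mirror the (ii)$\Rightarrow$(iv) argument with multiplicities. Using the adjunction $\text{Hom}(X\otimes V^*,M)\cong\text{Hom}(X,M\otimes V^{**})$ and the complete reducibility of $M_j\otimes V^{**}$, which holds because $V^{**}$ is tensor-reducible, I would compute $\overline{H_{\mathfrak{m}}}\otimes V^*\cong\overline{H_{\mathfrak{m}_{\ovep}}}^{\oplus\dim V}$. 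Counting dimensions then gives $\text{Sd}(\mathfrak{m})\dim V=\text{Sd}(\mathfrak{m}_{\ovep})\dim V$ in the appropriate fiber.

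More simply, the same (ii)$\Rightarrow$(iv) proof can be run for $\mathfrak{m}^{-1}$ directly. Since $\overline{H_{\mathfrak{m}^{-1}}}\otimes V\cong\overline{H_{\mathfrak{m}_{\ovep}}}^{\oplus\dim V}$ was already shown there, any irreducible $H/\mathfrak{m}^{-1}H$-module $L$ satisfies $L\otimes V\subseteq$ a semisimple $H_{\ovep}$-module. I would expect a Frobenius–Perron count to give $\text{Sd}(\mathfrak{m}^{-1})=\text{Sd}(\mathfrak{m}_{\ovep})$: the dimension identity $\text{Sd}(\mathfrak{m}^{-1})\dim V=\text{Sd}(\mathfrak{m}_{\ovep})\dim V$ is already there, and that is exactly what is needed. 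Thus $\mathfrak{m}^{-1}\in\mathcal{V}_\ell$, completing the subgroup property.
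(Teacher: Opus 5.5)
Your closure-under-products argument is the paper's, and it is slightly more careful. The paper feeds $V\otimes W$, which need not be irreducible, directly into Theorem~\ref{thm-tensred-insec2}. You instead pass to an irreducible summand $L$ with $\operatorname{Ann}_C L=\mathfrak{m}\ast\mathfrak{n}$.

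\emph{Inverses.} The paper uses a one-line ring-theoretic fact: $\text{Sd}(\mathfrak{m})=\text{Sd}(\mathfrak{m}^{-1})$ for \emph{every} $\mathfrak{m}$. The antipode induces an algebra anti-isomorphism $S_\chi\colon H/\mathfrak{m}_\chi H\to H/\mathfrak{m}_{\chi^{-1}}H$. Moreover, $\text{Sd}$ is the dimension of the Wedderburn quotient, which is unchanged on passing to the opposite algebra. So the obstacle $V^{**}\not\cong V$ that you anticipate never arises.

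Of your three finishes, only the last is sound. Rerunning the (ii)$\Rightarrow$(iv) computation for $V$ itself is legitimate: both $\overline{H_{\mathfrak{m}^{-1}}}\otimes V$ and $M_j\otimes V^{*}$ are completely reducible, because $V$ and the $M_j$ are tensor-reducible. It gives $\overline{H_{\mathfrak{m}^{-1}}}\otimes V\cong\overline{H_{\mathfrak{m}_{\ovep}}}^{\oplus\dim V}$, hence $\text{Sd}(\mathfrak{m}^{-1})=\text{Sd}(\mathfrak{m}_{\ovep})$. That is exactly the intermediate line of that proof, just before the paper invokes $\text{Sd}(\mathfrak{m})=\text{Sd}(\mathfrak{m}^{-1})$. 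So your route recovers, on $\mathcal{V}_\ell$ only and via multiplicity counting, the equality the paper gets globally and with no input from tensor-reducibility.

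Discard the other two finishes.
\begin{itemize}
\item No ``right-dual analogue of (vi)'' is proved in the paper, and as you set it up it does not apply. Complete reducibility of $V^{**}\otimes V^{*}=X\otimes{}^{*}X$ with $X=V^{**}$ is information about $V^{**}$, which you already know is tensor-reducible. It does not yield $V^{*}\otimes V^{**}$.
\item The multiplicity count with $\overline{H_{\mathfrak{m}}}\otimes V^{*}$ lands in the fiber at $\mathfrak{m}$ and returns $\text{Sd}(\mathfrak{m})=\text{Sd}(\mathfrak{m}_{\ovep})$, which you already had. Turning it into a statement about $\mathfrak{m}^{-1}$ requires precisely $\text{Sd}(\mathfrak{m})=\text{Sd}(\mathfrak{m}^{-1})$. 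Also, the complete reducibility of $\overline{H_{\mathfrak{m}}}\otimes V^{*}$ is not supplied by the single instance $V^{**}\otimes V^{*}$.
\end{itemize}
If you prefer a tensor-categorical argument, the clean one is that duals of tensor-reducible modules are tensor-reducible. Indeed, $V^{*}\otimes W\cong({}^{*}W\otimes V)^{*}$ and $W\otimes V^{*}\cong(V\otimes{}^{*}W)^{*}$, and duals of completely reducible modules are completely reducible.
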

\begin{proof}
Since $\text{Sd}(\mathfrak{m})=\text{Sd}(\mathfrak{m}^{-1})$ for all $\mathfrak{m}\in \operatorname{maxSpec}C$, it follows from 
\eqref{eq-thm-BY-disrep} that the subvariety $\mathcal{V}( D_{\ell}(H/C;\text{tr}) )$ is closed under taking inverses.
It remains to show that
\begin{equation}\label{eq-lowesvar-cm1}
\mathfrak{m}\ast\mathfrak{n}\in \mathcal{V}(D_{\ell}(H/C;\text{tr}))\text{ for all\ }\mathfrak{m},\mathfrak{n}\in  \mathcal{V}(D_{\ell}(H/C;\text{tr})).
\end{equation}
For any irreducible $H/\mathfrak{m}H$-module $V$ and irreducible $H/\mathfrak{n}H$-module $W$, by Theorem \ref{thm-tensred-insec2}, both $V$ and $W$ are tensor-reducible as $H$-modules. It follows that $V\otimes W$ is tensor-reducible as an $H$-module. Note that $\mathfrak{m}\ast \mathfrak{n}$ is a maximal ideal of $C$ and $(\mathfrak{m}\ast\mathfrak{n})(V\otimes W)=0$.
 Then $V\otimes W$ can be regarded as an $H/(\mathfrak{m}\ast\mathfrak{n})H$-module, and $\text{Ann}_{C}(V\otimes W)=\mathfrak{m}\ast\mathfrak{n}$. By Theorem \ref{thm-tensred-insec2}, $D_{\ell}(H/C;\text{tr})  \subseteq \mathfrak{m}\ast\mathfrak{n}  $. This proves \eqref{eq-lowesvar-cm1}.
\end{proof}

Let $(H,C,\text{tr})$ be an affine Cayley-Hamilton Hopf algebra such that the identity fiber algebra $H/\mathfrak{m}_{\ovep}H$ has the Chevalley property. In Theorem \ref{thm-Cheviff-ap}, we see that whether $H$ has the Chevalley property is closely related to the discriminant ideals, especially the lowest one.  By Theorem \ref{thm-lowestsetsubgrp}, the lowest discriminant subvariety $\mathcal{V}_{\ell}=\mathcal{V}(D_{\ell}(H/C;\text{tr}))$ is a closed subgroup of the affine algebraic group $\operatorname{maxSpec}C$. In particular, the coordinate ring of $\mathcal{V}_{\ell}$,
$$C/\sqrt{D_{\ell}(H/C;\text{tr})},$$ is a Hopf quotient of $C$. Hence, the Hopf ideal $\sqrt{D_{\ell}(H/C;\text{tr})}$ of $C$ gives rise to a (not necessary finite-dimensional) Hopf quotient of $H$:
$$\frac{H}{\sqrt{D_{\ell}(H/C;\text{tr})}H}.$$
It is easy to see that $C/\sqrt{D_{\ell}(H/C;\text{tr})}$ is a central Hopf subalgebra of $H/\sqrt{D_{\ell}(H/C;\text{tr})}H$. In fact, for any ideal $\mathfrak{a}$ of $C$, $\mathfrak{a}H\cap C=\mathfrak{a}$ since $\text{tr}:H\to C$ is $C$-linear and $\text{tr}(c)=nc$ for all $c\in C$. It is worth noting that the above Hopf quotient satisfies the following:
\begin{corollary}
Let $(H,C,\text{tr})$ be an affine Cayley-Hamilton Hopf algebra of degree $n$ such that the identity fiber algebra $H/\mathfrak{m}_{\ovep}H$ has the Chevalley property. Let $\mathfrak{d}_{\ell}= \sqrt{D_{\ell}(H/C;\text{tr})}\subseteq C$ be the radical of the lowest discriminant ideal $ D_{\ell}(H/C;\text{tr})$. Let $\text{tr}_{\mathfrak{d}_{\ell}}:H/\mathfrak{d}_{\ell}H\to C/\mathfrak{d}_{\ell}$ denote the canonical map induced by $\text{tr}$. Then the following hold.\label{cor-affCHquodisHq-cheviffid}
\begin{itemize}
\item[(a)] The algebra with trace $(H/\mathfrak{d}_{\ell}H, C/\mathfrak{d}_{\ell}, \text{tr}_{\mathfrak{d}_{\ell}})$ is an affine Cayley-Hamilton Hopf algebra with the Chevalley property, and its identity fiber algebra is $H/\mathfrak{m}_{\ovep}H$.
\item[(b)]For any Hopf ideal $\mathfrak{a}$ of $C$, the Hopf quotient $H/\mathfrak{a}H$ has the Chevalley property if and only if $\mathfrak{a}\supseteq \mathfrak{d}_{\ell}$.
\end{itemize}
\end{corollary}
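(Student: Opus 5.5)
The plan is to derive both parts from Theorem \ref{thm-lowestsetsubgrp}, Theorem \ref{thm-tensred-insec2} and Theorem \ref{thm-Cheviff-ap}, using two elementary facts: (1) the passage from $(H,C)$ to a quotient $(H/\mathfrak{a}H, C/\mathfrak{a})$ for a Hopf ideal $\mathfrak{a}$ of $C$, and (2) the fact that irreducible $H/\mathfrak{a}H$-modules are exactly the irreducible $H$-modules $V$ with $\mathfrak{a}\subseteq \text{Ann}_C V$.

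For (a), first I would set up the quotient. By Theorem \ref{thm-lowestsetsubgrp}, $\mathcal{V}_{\ell}$ is a closed subgroup of $\operatorname{maxSpec}C$. Hence $\mathfrak{d}_{\ell}$ is a Hopf ideal of $C$, since $C$ is reduced and $C/\mathfrak{d}_\ell$ is the coordinate ring of $\mathcal{V}_\ell$. It follows that $\mathfrak{d}_\ell H$ is a Hopf ideal of $H$, and that $C/\mathfrak{d}_\ell$ is a central Hopf subalgebra of $H/\mathfrak{d}_\ell H$; it embeds because $\mathfrak{d}_\ell H\cap C=\mathfrak{d}_\ell$, as remarked before the corollary. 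The pair is affine and module-finite. Next I would check that $\text{tr}$ descends: $\text{tr}(\mathfrak{d}_\ell H)=\mathfrak{d}_\ell\text{tr}(H)\subseteq \mathfrak{d}_\ell$ by $C$-linearity. The trace identity, $\text{tr}(1)=n$, and the Cayley-Hamilton identity $p_{n,a}(a)=0$ all pass to the quotient, because the coefficients $c_k(a)$ are polynomials in traces of powers of $a$ and are therefore compatible with reduction. The identity fiber is $H/(\mathfrak{m}_{\ovep}+\mathfrak{d}_\ell)H=H/\mathfrak{m}_{\ovep}H$, because $\mathfrak{m}_{\ovep}\in\mathcal{V}_\ell$ gives $\mathfrak{d}_\ell\subseteq\mathfrak{m}_{\ovep}$. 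To get the Chevalley property, take any irreducible $H/\mathfrak{d}_\ell H$-module $V$. It is an irreducible $H$-module with $\text{Ann}_CV\supseteq \mathfrak{d}_\ell\supseteq D_\ell(H/C;\text{tr})$, so by Theorem \ref{thm-tensred-insec2}(a) it is tensor-reducible as an $H$-module. Given two such modules $V,W$, the tensor product $V\otimes W$ is a completely reducible $H$-module annihilated by $\mathfrak{d}_\ell H$. The annihilation holds because $\text{Ann}_C(V\otimes W)=\mathfrak{m}\ast\mathfrak{n}\in\mathcal{V}_\ell$ by the subgroup property. Hence $V\otimes W$ is completely reducible over the quotient.

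For (b), the ``if'' direction is immediate from (a): when $\mathfrak{a}\supseteq\mathfrak{d}_\ell$, the algebra $H/\mathfrak{a}H$ is a Hopf quotient of $H/\mathfrak{d}_\ell H$, and its irreducible modules and their tensor products are among those of $H/\mathfrak{d}_\ell H$, with complete reducibility preserved. For ``only if'', suppose $H/\mathfrak{a}H$ has the Chevalley property and let $\mathfrak{m}\in\mathcal{V}(\mathfrak{a})$. Pick an irreducible $H/\mathfrak{m}H$-module $V$; this is possible since $H/\mathfrak{m}H\neq 0$ by $\mathfrak{m}H\cap C=\mathfrak{m}$. The dual $V^*$ is again an $H/\mathfrak{a}H$-module, because $\mathfrak{a}$ is a Hopf ideal and so $S(\mathfrak{a})\subseteq\mathfrak{a}$. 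Therefore $V\otimes V^*$ is completely reducible, and Theorem \ref{thm-tensred-insec2}(a)(vi)$\Rightarrow$(iv) gives $\mathfrak{m}\in\mathcal{V}_\ell$. Thus $\mathcal{V}(\mathfrak{a})\subseteq\mathcal{V}_\ell$. Since $C$ is affine, the Nullstellensatz gives $\mathfrak{d}_\ell\subseteq\sqrt{\mathfrak{a}}$.

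The main obstacle is upgrading $\mathfrak{d}_\ell\subseteq\sqrt{\mathfrak{a}}$ to $\mathfrak{d}_\ell\subseteq \mathfrak{a}$. The approach I would try is to show that every Hopf ideal of $C$ is radical. In characteristic zero, a Hopf quotient $C/\mathfrak{a}$ of the commutative Hopf algebra $C$ is again a commutative Hopf algebra, so it is reduced by Cartier's theorem. Hence $\mathfrak{a}=\sqrt{\mathfrak{a}}\supseteq\mathfrak{d}_\ell$.
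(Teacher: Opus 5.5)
Your proposal is correct. Part (a) is essentially the paper's argument. Your appeal to $\text{Ann}_C(V\otimes W)=\mathfrak{m}\ast\mathfrak{n}\in\mathcal{V}_{\ell}$ is harmless but unnecessary: $\mathfrak{d}_{\ell}H$ is already a Hopf ideal, so $V\otimes W$ is automatically an $H/\mathfrak{d}_{\ell}H$-module.

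The real difference is in the necessity direction of (b). The paper equips $H/\mathfrak{a}H$ with the induced trace $\text{tr}_{\mathfrak{a}}$ and observes that $(H/\mathfrak{a}H,C/\mathfrak{a},\text{tr}_{\mathfrak{a}})$ is again an affine Cayley--Hamilton Hopf algebra with the same identity fiber, hence the same level $\ell$. It applies Theorem \ref{thm-Cheviff-ap} to this quotient, so its lowest discriminant ideal vanishes. It then transfers this back to $H$ via $\text{Sd}_{H}(\mathfrak{m})=\text{Sd}_{H/\mathfrak{a}H}(\mathfrak{m}/\mathfrak{a})$ and Lemma \ref{lem-sdeqiffinlows3}.

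You instead stay in $H$ and use criterion (vi) of Theorem \ref{thm-tensred-insec2}. Since $S(\mathfrak{a})\subseteq\mathfrak{a}$, the dual $V^{*}$ of an irreducible $H/\mathfrak{a}H$-module is again an $H/\mathfrak{a}H$-module, so the hypothesis makes $V\otimes V^{*}$ completely reducible. This is the one criterion in that theorem that can be tested inside the quotient category; left or right tensor-reducibility would require testing against all irreducible $H$-modules.

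Your route needs neither the Cayley--Hamilton structure on the quotient nor Theorem \ref{thm-Cheviff-ap}. It also proves slightly more: $\mathfrak{a}\supseteq\mathfrak{d}_{\ell}$ already follows once $V\otimes V^{*}$ is completely reducible for every irreducible $H/\mathfrak{a}H$-module $V$. The paper's route instead presents (b) as a direct instance of its discriminant-ideal criterion for the quotient Cayley--Hamilton Hopf algebra.

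The final step is the same in both: the Nullstellensatz gives $\mathfrak{d}_{\ell}\subseteq\sqrt{\mathfrak{a}}$, and Cartier's theorem makes $\mathfrak{a}$ radical.
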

\begin{proof}
(a) By Theorem \ref{thm-lowestsetsubgrp} and the preceding discussion, $\mathfrak{d}_{\ell}$ is a Hopf ideal of $C$ and $C/\mathfrak{d}_{\ell}$ can be regarded as a central Hopf subalgebra of $H/\mathfrak{d}_{\ell}H$. Moreover, the canonical projectrion $\pi_{\mathfrak{d}_{\ell}}: H\to H/\mathfrak{d}_{\ell}H$ is trace-preserving, that is, one has the following commutative diagram:
$$\begin{tikzcd}
H \arrow[rr, "\pi_{\mathfrak{d}_{\ell}}"] \arrow[d, "\text{tr}"'] &  & H/\mathfrak{d}_{\ell}H \arrow[d, "\text{tr}_{\mathfrak{d}_{\ell}}"] \\
C \arrow[rr]                                                      &  & C/\mathfrak{d}_{\ell}                                              
\end{tikzcd}$$
Now it is clear that $(H/\mathfrak{d}_{\ell}H, C/\mathfrak{d}_{\ell}, \text{tr}_{\mathfrak{d}_{\ell}})$ is an affine Cayley-Hamilton Hopf algebra of degree $n$. Since $\mathfrak{m}_{\ovep}\supseteq \mathfrak{d}_{\ell}$, the identity fiber of $(H/\mathfrak{d}_{\ell}H, C/\mathfrak{d}_{\ell})$ is $(H/\mathfrak{d}_{\ell}H)/(\mathfrak{m}_{\ovep}H/\mathfrak{d}_{\ell}H)\cong H/\mathfrak{m}_{\ovep}H$. For any irreducible $H/\mathfrak{d}_{\ell}H$-module $V$, $V$ is also irreducible as an $H$-module and $\mathfrak{d}_{\ell}\subseteq \text{Ann}_{H}V$. Thus, by Theorem \ref{thm-tensred-insec2} (a), $V$ is a tensor-reducible $H$-module. In particular, the tensor product of any two irreducible modules of $H/\mathfrak{d}_{\ell}H$ is completely reducible.

(b) The sufficiency part is clear, now we prove necessity. As in (a), $(H/\mathfrak{a}H,C/\mathfrak{a},\text{tr}_{\mathfrak{a}})$ is an affine Cayley-Hamilton Hopf algebra of degree $n$, here $\text{tr}_{\mathfrak{a}}:H/\mathfrak{a}H\to C/\mathfrak{a}$ is the induced canonical trace map. The identity fiber algebra of $(H/\mathfrak{a}H,C/\mathfrak{a})$ is $H/\mathfrak{m}_{\ovep}H$. Since $H/\mathfrak{a}H$ has the Chevalley property, by Theorem \ref{thm-Cheviff-ap} and \eqref{eq-thm-BY-disrep}, any maximal ideal of $C/\mathfrak{a}$, say $\mathfrak{m}/\mathfrak{a}$, satisfies $\text{Sd}_{H}(\mathfrak{m})=\text{Sd}_{H/\mathfrak{a}H}(\mathfrak{m}/\mathfrak{a})=\text{Sd}_{H/\mathfrak{a}H}(\mathfrak{m}_{\ovep}/\mathfrak{a})=\text{Sd}_{H}(\mathfrak{m}_{\ovep})$. It follows that $\mathfrak{m}\supseteq \mathfrak{d}_{\ell}$. Since $\text{char}\mathbbm{k}=0$, $C/\mathfrak{a}$ is reduced. It follows that, since $C$ is a Jacobson ring, $\mathfrak{a}\supseteq  \mathfrak{d}_{\ell}$.
\end{proof}

Corollary \ref{cor-affCHquodisHq-cheviffid} enables us to construct tensor categories with the Chevalley property from some big quantum groups at roots of unity. 

\begin{example}\label{eg-chevtensorcat-fromquanBorel}
Retain hypotheses and notation of \S \ref{subsec-Uqb}. Then the De Concini-Kac-Procesi quantized Borel subalgebra $\mathcal{U}_{\epsilon}^{\geq 0}(\mathfrak{g})$, endowed with its canonical central Hopf subalgebra $\mathcal{C}_{\epsilon}^{\geq 0}(\mathfrak{g})$ and the regular trace, gives rise to an affine Cayley-Hamilton Hopf algebra $$(\mathcal{U}_{\epsilon}^{\geq 0}(\mathfrak{g}),\mathcal{C}_{\epsilon}^{\geq 0}(\mathfrak{g}),\text{tr}_{\text{reg}}).$$
Moreover, the identity fiber is basic. By \cite[Theorem 5.5 (b)]{mi2025lowest}, the lowest discriminant subvariety of $(\mathcal{U}_{\epsilon}^{\geq 0}(\mathfrak{g}),\mathcal{C}_{\epsilon}^{\geq 0}(\mathfrak{g}),\text{tr}_{\text{reg}})$ is of level $l^{n}+1$ and
$$\mathcal{V}(D_{l^{n}+1}(\mathcal{U}_{\epsilon}^{\geq 0}(\mathfrak{g})/\mathcal{C}_{\epsilon}^{\geq 0}(\mathfrak{g});\text{tr}_{\text{reg}}))=\{(K_{1}^{l}-\alpha_{1},...,K_{n}^{l}-\alpha_{n},E_{\beta_{1}}^{l},...,E_{\beta_{N}}^{l})\mid \alpha_{1},...,\alpha_{n}\in \mathbbm{k}^{\times}\}.$$
It follows that
$$\mathfrak{d}_{l^{n}+1}=\sqrt{D_{l^{n}+1}(\mathcal{U}_{\epsilon}^{\geq 0}(\mathfrak{g})/\mathcal{C}_{\epsilon}^{\geq 0}(\mathfrak{g});\text{tr}_{\text{reg}})}=(E_{\beta_{1}}^{l},...,E_{\beta_{N}}^{l})\subseteq \mathcal{C}_{\epsilon}^{\geq 0}(\mathfrak{g}).  $$
 Clearly, $\mathcal{C}_{\epsilon}^{\geq 0}(\mathfrak{g})/\mathfrak{d}_{l^{n}+1}\cong \mathbb{C}[K_{1}^{\pm l},...,K_{n}^{\pm l} ]$. Hence $\text{GK dimension }\mathcal{U}_{\epsilon}^{\geq 0}(\mathfrak{g})/\mathfrak{d}_{l^{n}+1}\mathcal{U}_{\epsilon}^{\geq 0}(\mathfrak{g})=n$ and $$(\mathcal{U}_{\epsilon}^{\geq 0}(\mathfrak{g})/\mathfrak{d}_{l^{n}+1}\mathcal{U}_{\epsilon}^{\geq 0}(\mathfrak{g}),C/\mathfrak{d}_{l^{n}+1},\text{tr}_{\text{reg}} )$$ is an infinite-dimensional Cayley-Hamilton Hopf algebra with the Chevalley property. Namely, $(\mathcal{U}_{\epsilon}^{\geq 0}(\mathfrak{g})/\mathfrak{d}_{l^{n}+1}\mathcal{U}_{\epsilon}^{\geq 0}(\mathfrak{g}))\text{-mod}$ is a tensor category with the Chevalley property.
\end{example}

The following example demonstrates that, in the setting of Theorem \ref{thm-lowestsetsubgrp}, a non-lowest discriminant subvariety is not, in general, a subgroup of $\operatorname{maxSpec}C$.
\begin{example}
[\cite{MR1898492,MR1296515}]Let $r\geq 3$ be an odd positive integer and let $\epsilon\in \mathbb{C}$ be a primitive $r$-th root of unity. Recall that the quantized function algebra $\mathcal{O}_{\epsilon}(\text{SL}_{2}(\mathbb{C}))$ is the Hopf algebra generated by $a,b,c,d$, subject to\label{eg-OepSL2-notsubgrp} relations
$$ab=\epsilon ba,\  ac=\epsilon ca,\  bc=cb,\  bd=\epsilon db,\  cd=\epsilon dc,$$
$$ad-da=(\epsilon-\epsilon^{-1})bc,\  ad-\epsilon bc=1.$$
The coalgebra structure of $\mathcal{O}_{\epsilon}(\text{SL}_{2}(\mathbb{C}))$ is defined by
$$\Delta(a)=a\otimes a+b\otimes c,\  \Delta(b)=a\otimes b+b\otimes d,\ \varepsilon(a)=1,\ \varepsilon(b)=0,$$
$$\Delta(c)=d\otimes c+c\otimes a,\  \Delta(d)=d\otimes d+c\otimes b,\ \varepsilon(c)=0,\  \varepsilon(d)=1.$$
Let $\mathcal{C}_{\epsilon}(\text{SL}_{2}(\mathbb{C}))$ be the subalgebra of $\mathcal{O}_{\epsilon}(\text{SL}_{2}(\mathbb{C}))$ generated by $a^{r},b^{r},c^{r},d^{r}$. It follows from \cite[Proposition. III. 3.1]{MR1898492} and Example \ref{eg-HStrace} that $(\mathcal{O}_{\epsilon}(\text{SL}_{2}(\mathbb{C})),\mathcal{C}_{\epsilon}(\text{SL}_{2}(\mathbb{C}),\text{tr}_{HS})$ is an affine Cayley-Hamilton Hopf algebra of degree $r^{3}$ and $\mathcal{C}_{\epsilon}(\text{SL}_{2}(\mathbb{C}))\cong \mathcal{O}(\text{SL}_{2}(\mathbb{C}))$ as Hopf algebras. Here $\text{tr}_{HS}:\mathcal{O}_{\epsilon}(\text{SL}_{2}(\mathbb{C}))\to \mathcal{C}_{\epsilon}(\text{SL}_{2}(\mathbb{C}))$ is the Hattori-Stallings trace map. Clearly,
\begin{align*}
\operatorname{maxSpec}\mathcal{C}_{\epsilon}(\text{SL}_{2}(\mathbb{C}))=&\{(a^{r}-\lambda,b^{r}-\mu,c^{r}-\xi,d^{r}-\eta)\mid  \lambda,\mu,\xi,\eta\in \mathbb{C},\lambda\eta-\mu\xi=1 \}\\
\cong&\left\{\begin{pmatrix}
\lambda &\mu\\
\xi& \eta
\end{pmatrix}\in \text{M}_{2}(\mathbb{C}) \mid \lambda\eta-\mu\xi=1
\right\}\\
=& \text{SL}_{2}(\mathbb{C})
\end{align*}
as algebraic groups. We use the above isomorphism to identify $\operatorname{maxSpec}\mathcal{C}_{\epsilon}(\text{SL}_{2}(\mathbb{C}))$ with $\text{SL}_{2}(\mathbb{C})$. By \cite[III. \S 3.2]{MR1898492} and \eqref{eq-thm-BY-disrep}, $\mathcal{O}_{\epsilon}(\text{SL}_{2}(\mathbb{C}))/\mathfrak{m}_{\ovep}\mathcal{O}_{\epsilon}(\text{SL}_{2}(\mathbb{C}))$ is basic, the lowest discriminant ideal is of level $r+1$, 
$$\mathcal{V}(D_{r+1}(\mathcal{O}_{\epsilon}(\text{SL}_{2}(\mathbb{C}))/\mathcal{C}_{\epsilon}(\text{SL}_{2}(\mathbb{C}));\text{tr}_{HS}))=\left\{\begin{pmatrix}
\lambda &0\\
0& \lambda^{-1}
\end{pmatrix}\mid \lambda\in \mathbb{C}^{\times}\right\},$$
$$\mathcal{V}(D_{r^{2}+1}(\mathcal{O}_{\epsilon}(\text{SL}_{2}(\mathbb{C}))/\mathcal{C}_{\epsilon}(\text{SL}_{2}(\mathbb{C}));\text{tr}_{HS}))=\left\{\begin{pmatrix}
\lambda &\mu\\
0& \lambda^{-1}
\end{pmatrix},\begin{pmatrix}
\lambda &0\\
\mu& \lambda^{-1}
\end{pmatrix} \mid \lambda\in \mathbb{C}^{\times},\mu\in \mathbb{C}\right\},\ \text{and}$$
$$\mathcal{V}(D_{r^{3}+1}(\mathcal{O}_{\epsilon}(\text{SL}_{2}(\mathbb{C}))/\mathcal{C}_{\epsilon}(\text{SL}_{2}(\mathbb{C}));\text{tr}_{HS}))=\operatorname{maxSpec}\mathcal{C}_{\epsilon}(\text{SL}_{2}(\mathbb{C})).$$
Hence, one has the following ascending chain of the discriminant subvarieties:
\begin{align*}
\varnothing=\cdots=&\mathcal{V}(D_{r}(\mathcal{O}_{\epsilon}(\text{SL}_{2}(\mathbb{C}))/\mathcal{C}_{\epsilon}(\text{SL}_{2}(\mathbb{C}));\text{tr}_{HS}))\subsetneq \mathcal{V}(D_{r+1}(\mathcal{O}_{\epsilon}(\text{SL}_{2}(\mathbb{C}))/\mathcal{C}_{\epsilon}(\text{SL}_{2}(\mathbb{C}));\text{tr}_{HS}))\\
=\cdots=& \mathcal{V}(D_{r^{2}}(\mathcal{O}_{\epsilon}(\text{SL}_{2}(\mathbb{C}))/\mathcal{C}_{\epsilon}(\text{SL}_{2}(\mathbb{C}));\text{tr}_{HS}))\subsetneq  \mathcal{V}(D_{r^{2}+1}(\mathcal{O}_{\epsilon}(\text{SL}_{2}(\mathbb{C}))/\mathcal{C}_{\epsilon}(\text{SL}_{2}(\mathbb{C}));\text{tr}_{HS}))\\
=\cdots=& \mathcal{V}(D_{r^{3}}(\mathcal{O}_{\epsilon}(\text{SL}_{2}(\mathbb{C}))/\mathcal{C}_{\epsilon}(\text{SL}_{2}(\mathbb{C}));\text{tr}_{HS}))\subsetneq \mathcal{V}(D_{r^{3}+1}(\mathcal{O}_{\epsilon}(\text{SL}_{2}(\mathbb{C}))/\mathcal{C}_{\epsilon}(\text{SL}_{2}(\mathbb{C}));\text{tr}_{HS}))\\
=\cdots=&\operatorname{maxSpec}\mathcal{C}_{\epsilon}(\text{SL}_{2}(\mathbb{C})).
\end{align*}

Obviously, $\mathcal{V}(D_{r^{2}+1}(\mathcal{O}_{\epsilon}(\text{SL}_{2}(\mathbb{C}))/\mathcal{C}_{\epsilon}(\text{SL}_{2}(\mathbb{C}));\text{tr}_{HS}))$ is not a subgroup of $\operatorname{maxSpec}\mathcal{C}_{\epsilon}(\text{SL}_{2}(\mathbb{C}))$.
\end{example}
 Theorem \ref{thm-lowestsetsubgrp} is effective in determining the lowest discriminant subvarieties for some examples of low GK-dimension (i.e., at most $2$).
\begin{example}
[\cite{MR2732991}]\label{eg-GZalgA1} Let $l\geq 2$ be a positive integer, $\xi $ be a primitive $l$-th root of unity and $n$ be a positive integer coprime to $l$. Let $H=\mathcal{A}(n,\xi)$ be the $\mathbbm{k}$-algebra generated by $x^{\pm 1},y$ subject to the relations $xx^{-1}=x^{-1}x=1,xy=\xi yx$. By \cite[Construction 1.1]{MR2732991}, there is a unique Hopf algebra structure on $H$ such that
$$\Delta(x)=x\otimes x,\  \varepsilon(x)=1,$$
$$\Delta(y)=y\otimes 1+x^{n}\otimes y,\  \varepsilon(y)=0.$$
Set $\sigma:\mathbbm{k}[x,x^{-1}]\to \mathbbm{k}[x,x^{-1}],x\mapsto \xi^{-1}x$. Then it is clear that 
$$H\cong \mathbbm{k}[x,x^{-1}][y;\sigma].$$
Hence $\{x^{i}y^{j}\mid i\in \mathbb{Z},j\in \mathbb{N}\}$ is a $\mathbbm{k}$-basis of $H$. Moreover, $H$ is an affine prime Artin-Schelter regular Hopf domain of GK-dimension two.

Note that $\xi^{n}$ is a primitive $l$-th root of unity. Using the quantum binomial theorem, one easily verifies that $\Delta(y^{l})=y^{l}\otimes 1+x^{nl}\otimes y^{l}$ and thus $C=\mathbbm{k}[x^{\pm l},y^{l}]$ is a central Hopf subalgebra of $H$. Hence $H$ is a free $C$-module of rank $l^{2}$. It follows from Example \ref{eg-reutr1} that $(H,C,\text{tr}_{\text{reg}})$ is an affine Cayley-Hamilton Hopf algebra of degree $l^{2}$. 

Clearly, the identity fiber algebra $H/\mathfrak{m}_{\ovep}H=\mathbbm{k}\langle x,y\rangle/(x^{l}-1,xy-\xi yx, y^{l})$ is precisely the $l^{2}$-dimensional Taft algebra, which is basic. By \cite[Theorem 4.2]{mi2025lowest} or \cite[Theorem 4.1]{HMQWChev2025}, the lowest discriminant ideal of $(H,C,\text{tr}_{\text{reg}})$ is of level $\ell\coloneqq l+1$. Denote the additive group of $\mathbbm{k}$ by $\mathbbm{k}^{+}$, and consider the action of the multiplicative group $\mathbbm{k}^{\times}$ on $\mathbbm{k}^{+}$ defined by $\varphi:\mathbbm{k}^{\times }\to \text{Aut}(\mathbbm{k}^{+}),  \beta\mapsto (\alpha\mapsto \beta^{n}\alpha)$. Then one has the semidirect product $\mathbbm{k}^{+}\rtimes_{\varphi}\mathbbm{k}^{\times}$. The multiplication of $\mathbbm{k}^{+}\rtimes_{\varphi}\mathbbm{k}^{\times}$ is given by
$$(\alpha_{1},\beta_{1})(\alpha_{2},\beta_{2})=(\alpha_{1}+\beta_{1}^{n}\alpha_{2},\beta_{1}\beta_{2}),\text{ for all\ }\alpha_{1},\alpha_{2}\in \mathbbm{k},\beta_{1},\beta_{2}\in \mathbbm{k}^{\times}.$$

Then the algebraic group $\operatorname{maxSpec}C$ is isomorphic to the semidirect product $\mathbbm{k}^{+}\rtimes_{\varphi}\mathbbm{k}^{\times}$. Indeed, the map
$$\mathbbm{k}^{+}\rtimes_{\varphi}\mathbbm{k}^{\times}\to \operatorname{maxSpec}C, (\alpha,\beta)\mapsto \mathfrak{m}_{\alpha,\beta}\coloneqq (y^{l}-\alpha,x^{l}-\beta)$$
gives an isomorphism from $\mathbbm{k}^{+}\rtimes_{\varphi}\mathbbm{k}^{\times}$ to $ \operatorname{maxSpec}C$. Note that $\mathfrak{m}_{\ovep}=\mathfrak{m}_{0,1}$. Below we exploit the algebraic group structure on the semidirect product $\mathbbm{k}^{+}\rtimes_{\varphi}\mathbbm{k}^{\times}$ to deduce that
\begin{equation}\label{eq-GZ2exlow}
\mathcal{V}(D_{\ell}(H/C;\text{tr}))=\{\mathfrak{m}_{0,\beta}\in \operatorname{maxSpec}C\mid  \beta\in \mathbbm{k}^{\times}\}.
\end{equation}
For any $\beta\in \mathbbm{k}^{\times}$, it is clear that the fiber algebra $H/\mathfrak{m}_{0,\beta}H$ has a character $\chi_{\beta}: H/\mathfrak{m}_{0,\beta}H\to\mathbbm{k}$ satisfying $\chi_{\beta}(\overline{x})=\beta^{1/l}$ and $\chi_{\beta}(\overline{y})=0$. By \cite[Theorem 4.3 (a)(b)]{mi2025lowest}, $H/\mathfrak{m}_{0,\beta}H\cong H/\mathfrak{m}_{\ovep}H $ and $\mathfrak{m}_{0,\beta}\in \mathcal{V}(D_{\ell}(H/C;\text{tr}))$. By \cite[Corollary 4.10]{HMQWChev2025}, $H$ does not have the Chevalley property and $\mathcal{V}(D_{\ell}(H/C;\text{tr}))$ is a proper closed subgroup of $\operatorname{maxSpec}C$. 

We now show that $\mathscr{T}\coloneqq \{(0,\beta)\mid \beta\in \mathbbm{k}^{\times}\}$ is a maximal subgroup of $\mathbbm{k}^{+}\rtimes_{\varphi}\mathbbm{k}^{\times}$, thereby establishing \eqref{eq-GZ2exlow}. Let $\mathscr{G}$ be a subgroup of the semidirect product $\mathbbm{k}^{+}\rtimes_{\varphi}\mathbbm{k}^{\times}$ that contains $\mathscr{T}$. Assume that $\mathscr{G}$ contains an element $(\alpha_{0},\beta_{0})$ with $\alpha_{0}\neq 0$. For any $\gamma\in \mathbbm{k}$, consider $(0,(\gamma\alpha_{0}^{-1})^{1/n})\in \mathscr{T}$, one has $(\gamma,\beta_{0})=(0,(\gamma\alpha_{0}^{-1})^{1/n})(\alpha_{0},\beta_{0})\in \mathscr{G}$. Hence $(\gamma,\beta)\in \mathscr{G}$ for all $\gamma\in \mathbbm{k}$ and $\beta\in \mathbbm{k}^{\times}$. That is, $\mathscr{G}=\mathbbm{k}^{+}\rtimes_{\varphi}\mathbbm{k}^{\times}$. This completes the proof of \eqref{eq-GZ2exlow}.

By \eqref{eq-GZ2exlow}, one has $\mathfrak{d}_{\ell}=\sqrt{D_{\ell}(H/C;\text{tr})}=(y^{l})$. Hence $H/y^{l}H$ has the Chevalley property by Corollary \ref{cor-affCHquodisHq-cheviffid}. It is not hard to see that the family of Hopf algebras with the Chevalley property considered in Example \ref{eg-aCheegwith1dimz-u2} can be obtained in this way.

Clearly, $\mathcal{V}(D_{\ell}(H/C;\text{tr}))$ is not a normal subgroup of $\operatorname{maxSpec}C$. 
\end{example}
\begin{remark}
Example \ref{eg-GZalgA1} shows that, in the setting of Theorem \ref{thm-lowestsetsubgrp}, the lowest discriminant subvariety of $(H,C,\text{tr})$ is not necessarily a normal subgroup of $\operatorname{maxSpec}C$.
\end{remark}

The corollary below shows that Theorem \ref{thm-lowestsetsubgrp} imposes a strong restriction on the possible shapes of the lowest discriminant subvariety for prime examples of GK-dimension one.

\begin{corollary}\label{cor-GK1prim-low}
Let $(H,C,\text{tr})$ be an affine Cayley-Hamilton Hopf algebra such that the identity fiber algebra $H/\mathfrak{m}_{\ovep}H$ has the Chevalley property. Assume that $H$ is prime and is of GK-dimension one. Then either $H$ is commutative or the lowest discriminant subvariety of $(H,C,\text{tr})$ is a finite cyclic subgroup of the algebraic group $\operatorname{maxSpec}C$.
\end{corollary}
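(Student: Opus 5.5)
We combine Theorem \ref{thm-lowestsetsubgrp} with the structure of algebraic groups of dimension at most one. First, note that $H$ is module-finite over $C$, so $\operatorname{GKdim} C = \operatorname{GKdim} H = 1$. Hence $G \coloneqq \operatorname{maxSpec}C$ is an affine algebraic group of dimension one. By Theorem \ref{thm-lowestsetsubgrp}, $\mathcal{V}_{\ell}$ is a closed subgroup of $G$. Such a subgroup either has dimension one, and so contains the identity component $G^{\circ}$, or it is finite.

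Suppose first that $\mathcal{V}_{\ell}\supseteq G^{\circ}$. We want to conclude that $H$ is commutative. The plan is to show that $H$ has the Chevalley property and then invoke \cite[Corollary 4.10]{HMQWChev2025}, which for prime $H$ gives that Chevalley is equivalent to commutativity. Consider the Hopf ideal $\mathfrak{a}\subseteq C$ defining $G^{\circ}$. This is a minimal prime of $C$, and $\mathfrak{a}\supseteq\mathfrak{d}_{\ell}$. By Corollary \ref{cor-affCHquodisHq-cheviffid}(b), the quotient $H/\mathfrak{a}H$ has the Chevalley property.

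To pass from $H/\mathfrak{a}H$ back to $H$, we use primeness. Since $H$ is prime and module-finite over its central subalgebra $C$, the ring $C$ is a domain: a product of nonzero central elements is nonzero in a prime ring. Therefore $G$ is irreducible, $G=G^{\circ}$, and $\mathfrak{a}=0$. It follows that $\mathcal{V}_{\ell}=G$, so $D_{\ell}$ is nilpotent. As $C$ is reduced, $D_{\ell}=0$. By Theorem \ref{thm-Cheviff-ap}, $H$ has the Chevalley property, and \cite[Corollary 4.10]{HMQWChev2025} then gives that $H$ is commutative.

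Now suppose $\mathcal{V}_{\ell}$ is finite. We must show it is cyclic. The argument uses the classification of connected one-dimensional groups over an algebraically closed field of characteristic zero: $G=G^{\circ}$ is isomorphic either to $\mathbb{G}_a$ or to $\mathbb{G}_m$. Every finite subgroup of $\mathbb{G}_a$ is trivial, because it is torsion-free in characteristic zero. Every finite subgroup of $\mathbb{G}_m$ is a group of roots of unity, hence cyclic. In either case $\mathcal{V}_{\ell}$ is finite cyclic, where the trivial group counts as cyclic. This completes the dichotomy.

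The step I expect to be the main obstacle is justifying that $C$ is a domain. For this I rely on the fact that, in a prime ring, any nonzero central element is a non-zero-divisor. If $c,c'\in C$ are nonzero with $cc'=0$, then $cHc'=Hcc'=0$, which contradicts primeness. The other step needing care is the classification of one-dimensional connected affine algebraic groups. This is standard: such a group is solvable, and is either unipotent or a torus.
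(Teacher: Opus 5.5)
Your proof is correct and follows essentially the same route as the paper. Both use Theorem \ref{thm-lowestsetsubgrp} and the classification of connected one-dimensional groups as $\mathbbm{k}^{+}$ or $\mathbbm{k}^{\times}$. In both, the non-finite case forces $D_{\ell}=0$, which gives the Chevalley property by Theorem \ref{thm-Cheviff-ap} and commutativity by \cite[Corollary 4.10]{HMQWChev2025}. Your explicit argument that $C$ is a domain, so that $\operatorname{maxSpec}C$ is connected, fills in a point the paper leaves implicit; the detour through Corollary \ref{cor-affCHquodisHq-cheviffid}(b) is harmless but unnecessary.
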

\begin{proof}
If $H$ has the Chevalley property, then $H$ is commutative by \cite[Corollary 4.10]{HMQWChev2025}. If $H$ does not have the Chevalley property, then  the lowest discriminant subvariety of $(H,C,\text{tr})$ is a proper closed subvariety of $\operatorname{maxSpec}C$ by \eqref{eq-thm-BY-disrep} and Theorem \ref{thm-Cheviff-ap}. By \cite[Proposition 8.2.9 (ii)]{MR1811901}, $\operatorname{maxSpec}C$ is a connected affine algebraic group of dimension one. Hence \cite[Theorem 20.5]{MR396773} implies that as an algebraic group, $\operatorname{maxSpec}C$ is isomorphic to either the additive group $\mathbbm{k}^{+}$ or the multiplicative group $\mathbbm{k}^{\times}$. Thus it suffices to show that every proper subgroup of $\mathbbm{k}^{+}$ and $\mathbbm{k}^{\times}$ is finite cyclic. Note that the only finite subgroup of the additive group $\mathbbm{k}^{+}$ is the trivial one, and any finite subgroup of the multiplicative group of an infinite field is cyclic. So the result holds.
\end{proof}
As a simple example, consider the infinite Taft algebra $\mathcal{H}(n,t,\xi)$ \cite{MR2320655}. As a $\mathbbm{k}$-algebra, $\mathcal{H}(n,t,\xi)$ is generated by $x$ and $g$ with relations $g^{n}=1$ and $xg=\xi gx$, where $n\geq 2,1\leq t\leq n$ are two coprime integers, and $\xi\in \mathbbm{k}^{\times}$ is a primitive $n$-th root of unity. There is a unique Hopf algebra structure on $\mathcal{H}(n,t,\xi)$ such that $\Delta(g)=g\otimes g,  \Delta(x)=x\otimes g^{t}+1\otimes x,   \varepsilon(g)=1,$ and $ \varepsilon(x)=0.$
Then $\mathcal{C}(n,t,\xi)=\mathbbm{k}[x^{n}]$ is a central Hopf subalgebra of $\mathcal{H}(n,t,\xi)$ and $\mathcal{H}(n,t,\xi)$ is a free $\mathcal{C}(n,t,\xi)$-module of rank $n^{2}$. Thus $(\mathcal{H}(n,t,\xi),\mathcal{C}(n,t,\xi),\text{tr}_{\text{reg}})$ is an affine Cayley-Hamilton Hopf algebra of degree $n^{2}$ and its identity fiber algebra is the $n^{2}$-dimensional Taft algebra. Moreover, $\mathcal{H}(n,t,\xi)$ is a prime  Artin-Schelter regular Hopf algebra of GK dimension one. In \cite{HMQWChev2025}, by computing the corresponding square dimension function on $\operatorname{maxSpec}C$, the authors show that the lowest discriminant subvariety $\mathcal{V}(D_{\ell}(\mathcal{H}(n,t,\xi)/\mathcal{C}(n,t,\xi);\text{tr}_{\text{reg}}))$ is a singleton. This can also be obtained immediately from Corollary \ref{cor-GK1prim-low}. Since $H$ is not commutative and $\operatorname{maxSpec}C\cong (\mathbbm{k},+)$ as algebraic groups, the only finite subgroup of $\operatorname{maxSpec}C$ is the trivial group. Hence the lowest discriminant subvariety $\mathcal{V}(D_{\ell}(\mathcal{H}(n,t,\xi)/\mathcal{C}(n,t,\xi);\text{tr}_{\text{reg}}))$ is a singleton.

The following example shows that, every finite cyclic group arises as the lowest discriminant subvariety of some affine prime Cayley-Hamilton Hopf algebra $(H,C,\text{tr})$ of GK-dimension one whose identity fiber algebra has the Chevalley property.

\begin{example}
[\cite{MR2661247}] \label{eg-geLiualg}Let $n$ and $w$ be positive integers with $n\geq 2$, and let $\xi$ be a primitive $n$-th root of unity.  Consider the generalized Liu algebra $H=\mathcal{B}(n,w,\xi)$. As a $\mathbbm{k}$-algebra, $H$ is generated by $x^{\pm 1},g,y$ with relations $yg=\xi gy,y^{n}=1-x^{w}=1-g^{n},xy=yx,xx^{-1}=x^{-1}x=1$ and $xg=gx$. By \cite[\S 3.4]{MR2661247}, there is a unique Hopf algebra structure on $H$ such that $$\Delta(x)=x\otimes x,\ \Delta(g)=g\otimes g,\ \varepsilon(x)=\varepsilon(g)=1,$$
$$\Delta(y)=y\otimes g+1\otimes y,\ \varepsilon(y)=0.$$
By \cite[Theorem 3.4 (b)(c)(e)]{MR2661247} and \cite[Theorem 0.2 (1)]{MR1938745}, $H$ is a prime Artin-Schelter regular Hopf algebra of GK-dimension one with center $C=\mathbbm{k}[x^{\pm 1}]$. Then $H$ is a free $C$-module with basis $\{y^{i}g^{j}\mid 0\leq i,j\leq n-1\}$. Hence $(H,C,\text{tr}_{\text{reg}})$ is an affine Cayley-Hamilton Hopf algebra of degree $n^{2}$. Now we show that the corresponding lowest discriminant subvariety is the cyclic subgroup of order $w$ of the algebraic group $\operatorname{maxSpec}C\cong \mathbbm{k}^{\times}$.

For any $\alpha\in \mathbbm{k}^{\times}$, set $\mathfrak{m}_{\alpha}=(x-\alpha)\in \operatorname{maxSpec}C$. Clearly, $\mathfrak{m}_{\ovep}=\mathfrak{m}_{1}$ and the identity fiber algebra $H/\mathfrak{m}_{\ovep}H$ is the $n^{2}$-dimensional Taft algebra. Indeed, it is straightforward to show that, for any $\alpha\in \mathbbm{k}^{\times}$ such that $\alpha^{w}=1$, $H/\mathfrak{m}_{\alpha}H$ is likewise the $n^{2}$-dimensional Taft algebra. If $\alpha\in \mathbbm{k}^{\times}$ satisfies $\alpha^{w}\neq 1$, then there is a unique algebra homomorphism $\rho_{\alpha}:H\to\text{M}_{n}(\mathbbm{k})$ such that
$$\rho_{\alpha}(x)=\alpha I_{n}, \ \rho_{\alpha}(y)=(1-\alpha^{w})^{1/n}\begin{pmatrix}
 &1 & & & \\
 & &1& &\\
& &  &\ddots &\\
& & & &1 \\
1& & & &
\end{pmatrix},\ \text{and}$$
$$\rho_{\alpha}(g)=\alpha^{w/n}\begin{pmatrix}
1& & & & \\
 &\xi & & &\\
& & \xi^{2} & &\\
& & &\ddots &\\
& & & &\xi^{n-1} 
\end{pmatrix}.$$
Clearly, $\rho_{\alpha}$ induces a surjective algebra homomorphism $\overline{\rho_{\alpha}}:H/\mathfrak{m}_{\alpha}H\to \text{M}_{n}(\mathbbm{k})$. By \cite[Proposition 2.1 (1)]{HMQWChev2025}, $\dim_{\mathbbm{k}}H/\mathfrak{m}_{\alpha}H=\dim_{\mathbbm{k}}H/\mathfrak{m}_{\ovep}H=n^{2}$. Hence we obtain that $H/\mathfrak{m}_{\alpha}H\cong \text{M}_{n}(\mathbbm{k})$ as $\mathbbm{k}$-algebras. By \eqref{eq-thm-BY-disrep}, one has
$$
\mathcal{V}(D_{k}(H/C;\text{tr}_{\text{reg}}))=\begin{cases}
\varnothing,& 1\leq k\leq n,\\
\{\mathfrak{m}_{\alpha}\mid \alpha^{w}=1\},& n+1\leq k\leq n^{2},\\
\operatorname{maxSpec}C,& k \geq  n^{2}+1.
\end{cases}
$$
Hence $\mathcal{V}(D_{n+1}(H/C;\text{tr}_{\text{reg}}))$ is the lowest discriminant subvariety of $(H,C,\text{tr}_{\text{reg}})$ and it is a cyclic group of order $w$. In this case, $\mathfrak{d}_{n+1}=\sqrt{D_{n+1}(H/C;\text{tr}_{\text{reg}})}=(x^{w}-1)\subseteq C$. It follows that $\dim _{\mathbbm{k}}C/\mathfrak{d}_{n+1}=w$ and hence $H/\mathfrak{d}_{n+1}H$ is an $n^{2}w$-dimensional Hopf algebra with the Chevalley property.
\end{example}

\section*{Acknowledgments}
The authors thank Zhongkai Mi and Milen Yakimov for helpful discussions and comments.


\end{document}